\numberwithin{equation}{section}
\newtheorem{theorem}{Theorem}[section]
\newtheorem{lemma}[theorem]{Lemma}
\newtheorem{proposition}[theorem]{Proposition}
\theoremstyle{definition}
\newtheorem{example}[theorem]{Example} 
\theoremstyle{definition}
\newtheorem{definition}[theorem]{Definition} 
\theoremstyle{remark}
\newtheorem{remark}[theorem]{Remark}
\newcommand{\eqdef}{\ensuremath{\stackrel{\mbox{\upshape\tiny def.}}{=}}}
\newcommand{\norm}[1]{\left\lVert#1\right\rVert}
\newcommand{\inner}[1]{\left\langle#1\right\rangle}
\def\LM#1{\hbox{\vrule width.2pt \vbox to#1pt{\vfill \hrule width#1pt height.2pt
}}}
\def\LL{{\mathchoice {\>\LM7\>}{\>\LM7\>}{\,\LM5\,}{\,\LM{3.35}\,}}}
\newcommand{\mres}{\LL}
\def\A{\mathcal{A}}
\def\E{\mathcal{E}}
\def\G{\mathcal{G}}
\def\H{\mathcal{H}}
\def\L{\mathcal{L}}
\def\P{\mathcal{P}}
\DeclareMathOperator*{\argmin}{arg\,min}
\DeclareMathOperator{\diam}{diam}
\DeclareMathOperator{\dist}{dist}
\DeclareMathOperator{\supp}{supp}
\newcommand{\NN}{\mathbb{N}}
\newcommand{\RR}{\mathbb{R}}
\def\disp{\displaystyle}
\newcommand{\nuexc}{\nu_{\text{exc}}}
\newcommand{\nuH}{\nu_{\H^1}}
\newcommand{\rhoexc}{\rho_{\text{exc}}}
\newcommand{\rhoH}{\rho_{\H^1}}
\newcommand{\gammaexc}{\gamma_{\text{exc}}}
\newcommand{\gammaH}{\gamma_{\H^1}}
\newcommand{\oB}{\overline{B}}
\DeclareMathOperator{\len}{len}
\DeclareMathOperator{\card}{card}
\DeclareMathOperator{\inte}{int}
\DeclareMathOperator{\proj}{proj}
\DeclareMathOperator{\conv}{conv}
\newcommand{\cconv}{\overline{\conv}}
\newcommand{\enscond}[2]{ \left\{\, #1 \mid #2 \,\right\} }
\newcommand{\abs}[1]{\left\lvert#1\right\rvert}
\newcommand{\oi}[2]{ \left] #1, #2 \right[ }
\newcommand{\ci}[2]{ \left[ #1, #2 \right] }
\def\dd{\mathrm{d}}
\title[]{One-dimensional approximation of measures in Wasserstein distance} 
\author{Antonin Chambolle}
\email{chambolle@ceremade.dauphine.fr}
\author{Vincent Duval}
\email{vincent.duval@inria.fr}
\author{João Miguel Machado}
\email{joao-miguel.machado@ceremade.dauphine.fr}
\address{CEREMADE, CNRS, Universit\'e Paris-Dauphine, Universit\'e PSL, 75775 Paris, France}
\address{MOKAPLAN, Inria Paris, 46 rue Barrault, 75013 Paris, France}
\thanks{The authors are members of the Inria MOKAPLAN team. V.D. gratefully acknowledges support from the Agence Nationale de la Recherche (CIPRESSI, ANR-19-CE48-0017-01). }
\date{\today}
\begin{document}
\maketitle

\begin{abstract}
	 We propose a variational approach to approximate measures with measures uniformly distributed over a 1-dimensional set. The problem consists in minimizing a Wasserstein distance as a data term with a regularization given by the length of the support. As it is challenging to prove existence of solutions to this problem, we propose a relaxed formulation, which always admits a solution. In the sequel we show that, under some assumption on the original measure, a solution to the relaxed problem is solution to the original one. Finally we prove that, whenever the original measure has a density in $L^{\frac{d}{d-1}}(\RR^d)$, any optimal solution is supported by an Ahlfors regular set.
\end{abstract}

\section{Introduction}\label{section.introduction}
In this paper we study the following 1-dimensional (1D) shape optimization
 problem: given a reference probability measure 
 $\rho_0 \in \mathcal{P}_p(\RR^d)$ (the set of
 Borel probability measures $\rho$ with $\int_{\RR^d}|x|^p \dd \rho<+\infty$, $p\ge 1$), we seek to approximate $\rho_0$ with measures uniformly supported on a one-dimensional connected subset of $\RR^d$. 
 This approximation is done by means of the following variational problem
\begin{equation}
	\label{problem.shape_optimization}
	\tag{$P_\Lambda$}
	\inf_{\Sigma \in \mathcal{A}} 
	W_p^p(\rho_0, \nu_{\Sigma}) + \Lambda \H^1(\Sigma),
\end{equation}
where the measure $\nu_{\Sigma}$ is defined as 
\begin{equation}\label{admissible_set}
	\nu_{\Sigma} \eqdef \frac{1}{\H^1(\Sigma)} \H^1\mres \Sigma, \text{ for } \Sigma \in 
    \mathcal{A}\eqdef 
	\left\{
		\Sigma \subset \RR^d :
		\begin{array}{l}
			0 < \mathcal{H}^1(\Sigma) < +\infty \\
			\text{compact, connected.}
		\end{array}
	\right\},
\end{equation}
and $\mathcal{H}^1$ denotes the 1-dimensional Hausdorff measure in $\RR^d$. The term $W_p$ denotes the usual Wasserstein distance on the space of probability measures (see~\cite{santambrogio2015optimal,villani2009optimal}
and Section~\ref{sec:wasserstein}).

One can trace the idea of approximating a probability measure by a 1D set back to the concept of \textit{principal curves} from the seminal paper \cite{hastie1989principal}, which extends linear regression to regression using general curves, and introduces a variational problem to define such curves.
In this variational sense, a principal curve minimizes the expectation of the distance to the curve, w.r.t.~a probability measure describing a data set (with some regularization to ensure existence). 
As proposed in \cite{kegl2000learning}, a length constraint is a simple and intrinsic way to ensure existence. The properties of such minimizers have been studied in detail in e.g.~\cite{lu2016average,delattre2020principal}. 

A further generalization consists in replacing the curve with a more general one-dimensional compact and connected set, yielding the \textit{average distance minimizer problem} introduced in~\cite{buttazzo2003optimal}, and its dual counterpart \textit{maximum distance minimizer problem}~\cite{paolini2004qualitative,lemenant2010presentation}. Such problems were conceived for applications in urban planning, where one seeks to minimize the average distance to a transportation network, giving rise to the need for a larger class of 1D sets allowing for bifurcations. 

While the above-mentioned problems only focus on some geometric approximation of the support of the measure, approximating a measure in the sense of weak convergence is sometimes more desirable. 
In \cite{lebrat2019optimal,chauffert2017projection}, the authors have proposed optimal transport based methods for the projection of probability measures onto classes of measures supported on simple curves, using the Wasserstein distance as a data term. Potential applications range from  3D printing to image compression and reconstruction. 
In \cite{ehler2021curve}, the data fidelity term is chosen to be a discrepancy, see also \cite{neumayer2021optimal}. The advantage of using discrepancies is that approximation rates can be given independently from the dimension, being therefore a good alternative to overcome the curse of dimensionality. The problem we study is an attempt to generalize this class of problems 
 to the approximation with one-dimensional connected sets. 
 
 One difficulty when studying \eqref{problem.shape_optimization} is that the class of measures $\nu_\Sigma$ is not closed in the usual weak topologies considered for the space of probability measures.
 While a sequence of sets $(\Sigma_{n})_{n\in \NN}$ in $\mathcal{A}$ with uniformly bounded length will have subsequences converging (in the Hausdorff sense) either to a point or a set in $\mathcal{A}$, the corresponding measures $\nu_{\Sigma_n}$ might converge to a measure which is not necessarily uniform on that set: longer parts of $\Sigma_n$ might concentrate in the limit on shorter parts of $\Sigma$, see~Figure~\ref{figure.concentration_sets_measure}.
 \begin{figure}[t]
    \centering

        \resizebox{0.9\linewidth}{!}{%
        \begin{tikzpicture}[x=0.75pt,y=0.75pt,yscale=-.6,xscale=0.6]
        
        \draw [color={rgb, 255:red, 74; green, 144; blue, 226 }  ,draw opacity=1 ]   (60.41,81.47) -- (249.96,81.47) ;
        \draw  [color={rgb, 255:red, 74; green, 144; blue, 226 }  ,draw opacity=1 ] (60.41,46.5) -- (249.96,46.5) -- (249.96,81.47) ;
        \draw  [color={rgb, 255:red, 208; green, 2; blue, 27 }  ,draw opacity=1 ] (60.41,46.5) .. controls (60.51,46.5) and (60.59,46.55) .. (60.64,46.65) .. controls (60.7,46.75) and (60.72,46.87) .. (60.68,47.01) .. controls (60.64,47.16) and (60.55,47.3) .. (60.41,47.39) .. controls (60.26,47.5) and (60.08,47.54) .. (59.88,47.53) .. controls (59.66,47.51) and (59.46,47.41) .. (59.27,47.24) .. controls (59.06,47.05) and (58.91,46.8) .. (58.83,46.5) .. controls (58.73,46.16) and (58.73,45.82) .. (58.81,45.46) .. controls (58.9,45.06) and (59.08,44.73) .. (59.36,44.44) .. controls (59.65,44.13) and (60,43.92) .. (60.41,43.83) .. controls (60.86,43.72) and (61.3,43.75) .. (61.74,43.93) .. controls (62.21,44.11) and (62.61,44.43) .. (62.94,44.87) .. controls (63.28,45.33) and (63.5,45.87) .. (63.59,46.5) .. controls (63.68,47.15) and (63.62,47.79) .. (63.39,48.43) .. controls (63.16,49.09) and (62.78,49.65) .. (62.27,50.1) .. controls (61.73,50.57) and (61.11,50.85) .. (60.41,50.95) .. controls (59.69,51.05) and (58.98,50.94) .. (58.3,50.61) .. controls (57.58,50.27) and (56.99,49.74) .. (56.52,49.02) .. controls (56.03,48.27) and (55.74,47.43) .. (55.65,46.5) .. controls (55.56,45.52) and (55.69,44.58) .. (56.06,43.68) .. controls (56.44,42.74) and (57.01,41.96) .. (57.77,41.36) .. controls (58.55,40.73) and (59.44,40.37) .. (60.41,40.26) .. controls (61.42,40.16) and (62.39,40.36) .. (63.33,40.84) .. controls (64.29,41.34) and (65.07,42.09) .. (65.69,43.08) .. controls (66.32,44.1) and (66.68,45.24) .. (66.77,46.5) .. controls (66.86,47.79) and (66.65,49.02) .. (66.15,50.21) .. controls (65.63,51.42) and (64.86,52.41) .. (63.86,53.18) .. controls (62.82,53.97) and (61.68,54.42) .. (60.41,54.51) .. controls (59.12,54.62) and (57.89,54.34) .. (56.71,53.7) .. controls (55.5,53.04) and (54.52,52.07) .. (53.77,50.8) .. controls (52.99,49.51) and (52.56,48.07) .. (52.47,46.5) .. controls (52.38,44.89) and (52.66,43.36) .. (53.31,41.9) .. controls (53.97,40.41) and (54.93,39.2) .. (56.18,38.27) .. controls (57.46,37.33) and (58.87,36.8) .. (60.41,36.7) .. controls (61.99,36.6) and (63.49,36.95) .. (64.91,37.76) .. controls (66.37,38.58) and (67.54,39.76) .. (68.44,41.3) .. controls (69.35,42.88) and (69.86,44.61) .. (69.94,46.5) .. controls (70.04,48.43) and (69.69,50.26) .. (68.9,51.99) .. controls (68.09,53.76) and (66.94,55.18) .. (65.44,56.27) .. controls (63.92,57.38) and (62.24,57.97) .. (60.41,58.08) .. controls (58.55,58.17) and (56.79,57.75) .. (55.12,56.78) .. controls (53.42,55.8) and (52.05,54.41) .. (51.01,52.58) .. controls (49.96,50.74) and (49.39,48.71) .. (49.3,46.5) .. controls (49.2,44.26) and (49.62,42.13) .. (50.56,40.12) .. controls (51.5,38.08) and (52.85,36.43) .. (54.59,35.19) .. controls (56.36,33.92) and (58.3,33.23) .. (60.41,33.14) .. controls (62.56,33.03) and (64.59,33.55) .. (66.5,34.67) .. controls (68.45,35.81) and (70.01,37.43) .. (71.19,39.52) .. controls (72.39,41.64) and (73.03,43.97) .. (73.12,46.5) .. controls (73.21,49.06) and (72.72,51.49) .. (71.65,53.77) .. controls (70.56,56.09) and (69.02,57.95) .. (67.03,59.35) .. controls (65.02,60.77) and (62.81,61.54) .. (60.41,61.64) .. controls (57.99,61.74) and (55.69,61.14) .. (53.53,59.87) .. controls (51.34,58.57) and (49.59,56.74) .. (48.26,54.36) .. controls (46.92,51.96) and (46.21,49.35) .. (46.12,46.5) .. controls (46.03,43.62) and (46.59,40.89) .. (47.8,38.33) .. controls (49.03,35.75) and (50.77,33.66) .. (53,32.1) .. controls (55.26,30.52) and (57.73,29.68) .. (60.41,29.58) .. controls (63.13,29.48) and (65.68,30.14) .. (68.09,31.59) .. controls (70.53,33.04) and (72.48,35.1) .. (73.94,37.74) .. controls (75.42,40.41) and (76.21,43.34) .. (76.3,46.5) .. controls (76.39,49.7) and (75.76,52.71) .. (74.4,55.55) .. controls (73.03,58.42) and (71.1,60.72) .. (68.62,62.44) .. controls (66.11,64.17) and (63.38,65.1) .. (60.41,65.2) .. controls (57.42,65.31) and (54.6,64.55) .. (51.94,62.95) .. controls (49.26,61.34) and (47.12,59.07) .. (45.51,56.15) .. controls (43.89,53.19) and (43.03,49.98) .. (42.94,46.5) .. controls (42.85,42.98) and (43.56,39.66) .. (45.05,36.55) .. controls (46.57,33.41) and (48.69,30.9) .. (51.41,29.02) .. controls (54.17,27.11) and (57.17,26.11) .. (60.41,26.02) .. controls (63.69,25.91) and (66.78,26.74) .. (69.68,28.5) .. controls (72.61,30.27) and (74.94,32.76) .. (76.69,35.96) .. controls (78.46,39.19) and (79.38,42.7) .. (79.47,46.5) .. controls (79.57,50.34) and (78.79,53.95) .. (77.15,57.33) .. controls (75.49,60.75) and (73.18,63.48) .. (70.21,65.52) .. controls (67.21,67.58) and (63.95,68.66) .. (60.41,68.76) .. controls (56.85,68.86) and (53.5,67.95) .. (50.35,66.04) .. controls (47.18,64.11) and (44.65,61.39) .. (42.76,57.93) .. controls (40.85,54.43) and (39.86,50.62) .. (39.77,46.5) .. controls (39.67,42.34) and (40.52,38.44) .. (42.3,34.77) .. controls (44.1,31.08) and (46.61,28.13) .. (49.82,25.93) .. controls (53.07,23.71) and (56.6,22.55) .. (60.41,22.45) .. controls (64.26,22.35) and (67.88,23.34) .. (71.27,25.42) .. controls (74.69,27.52) and (77.41,30.43) .. (79.44,34.18) .. controls (81.49,37.96) and (82.56,42.06) .. (82.65,46.5) .. controls (82.74,50.97) and (81.83,55.18) .. (79.9,59.11) .. controls (77.96,63.08) and (75.26,66.25) .. (71.8,68.61) .. controls (68.31,70.98) and (64.51,72.22) .. (60.41,72.32) .. controls (56.28,72.43) and (52.4,71.36) .. (48.77,69.12) .. controls (45.1,66.86) and (42.18,63.72) .. (40.01,59.71) .. controls (37.82,55.66) and (36.68,51.25) .. (36.59,46.5) .. controls (36.5,41.71) and (37.49,37.2) .. (39.55,32.99) .. controls (41.63,28.75) and (44.53,25.36) .. (48.24,22.85) .. controls (51.97,20.31) and (56.03,18.99) .. (60.41,18.89) .. controls (64.83,18.79) and (68.98,19.94) .. (72.86,22.33) .. controls (76.77,24.75) and (79.88,28.1) .. (82.19,32.4) .. controls (84.53,36.72) and (85.74,41.42) .. (85.83,46.5) ;
        \draw    (155.19,50.38) -- (155.19,77.59) ;
        \draw [shift={(155.19,77.59)}, rotate = 270] [color={rgb, 255:red, 0; green, 0; blue, 0 }  ][line width=0.75]    (0,5.59) -- (0,-5.59)   ;
        \draw [shift={(155.19,50.38)}, rotate = 270] [color={rgb, 255:red, 0; green, 0; blue, 0 }  ][line width=0.75]    (0,5.59) -- (0,-5.59)   ;
        \draw [color={rgb, 255:red, 74; green, 144; blue, 226 }  ,draw opacity=1 ][line width=1.5]    (456.45,63.34) -- (646,63.34) ;
        \draw  [fill={rgb, 255:red, 208; green, 2; blue, 27 }  ,fill opacity=1 ] (451.16,63.34) .. controls (451.16,59.76) and (453.53,56.86) .. (456.45,56.86) .. controls (459.38,56.86) and (461.75,59.76) .. (461.75,63.34) .. controls (461.75,66.91) and (459.38,69.81) .. (456.45,69.81) .. controls (453.53,69.81) and (451.16,66.91) .. (451.16,63.34) -- cycle ;
        
        \draw (307.25,28.47) node [anchor=north west][inner sep=0.75pt]  [font=\footnotesize]  {$\Sigma _{n}\xrightarrow[n\rightarrow\infty ]{d_{H}} \Sigma $};
        \draw (247.12,94.89) node [anchor=north west][inner sep=0.75pt]  [font=\footnotesize]  {$\H^{1} \mres \Sigma_{n}\xrightharpoonup[n \rightarrow \infty ]{\star } 2\H^{1} \mres \Sigma \ +\delta _{x_{0}}$};
        \draw (158.07,56.44) node [anchor=north west][inner sep=0.75pt]  [font=\footnotesize]  {$1/n$};

        \end{tikzpicture}
    }
    \caption{Concentration effects for the weak convergence of measures. Here $\Sigma_n$ is made of two lines getting closer and a spiral converging to a  point. In the Hausdorff limit we obtain a segment with shorter length, and a singular limiting measure.}\label{figure.concentration_sets_measure}
\end{figure}

Hence, minimizing sequences converge in general to a measure which is
not of the form $\nu_\Sigma$, and we need to determine a relaxation of our energy in a topology for which the Wasserstein distance is lower semi-continuous, such as the narrow convergence. The relaxed energy takes the form
\begin{equation}\label{problem.shape_optimization_relaxed}
	\tag{$\overline{P}_\Lambda$}
	\inf_{
		\substack{
			\nu \in \mathcal{P}_p(\RR^d)
		}
	  }
	W_p^p(\rho_0, \nu) + \Lambda \mathcal{L}(\nu),
\end{equation}
where  the length functional $\mathcal{L}$, defined in Section~\ref{subsec.deflength}, generalizes the notion of length of the support of a measure, see for instance Example~\ref{example.densities_rectifiable_set}.  We will show later on, in Proposition~\ref{proposition.length_functional_is_lsc},
that $\mathcal{L}$ is the lower semi-continuous relaxation, for the narrow topology, of the functional $\ell$ given by $\H^1(\Sigma)$ for measures
of the form $\nu_\Sigma$, and $+\infty$ else, see~\eqref{eq:defl}.
We also find that $\mathcal{L}(\nu) < \infty$ if and only if $\supp \nu \in \mathcal{A}$ or $\nu$ is a Dirac mass. The following Theorem gathers the various results proved throughout this paper.
\begin{theorem}\label{theorem.the_big_one}
  Let $\rho_0 \in \mathcal{P}_p(\RR^d)$, $\Lambda>0$. Then~\eqref{problem.shape_optimization_relaxed}  admits a solution $\nu$, and there exists $\Lambda_\star \ge 0$ such that if $\Lambda>\Lambda_\star$, $\nu$ is a Dirac mass. For $\Lambda<\Lambda_\star$, $\nu$ is supported by a set $\Sigma \in \mathcal{A}$ and the following properties hold.
  \begin{enumerate}
  \item If $\rho_0$ is absolutely continuous w.r.t.~$\H^1$, or has a $L^\infty$ density w.r.t.~$\H^1$, then so does $\nu$.
  \item  If $\rho_0$ does not give mass to $1D$-rectifiable sets, then $\nu=\nu_\Sigma$ and therefore is a solution to the original problem \eqref{problem.shape_optimization}.
  \item If $\rho_0 \in L^{\frac{d}{d-1}}(\RR^d)$, then $\Sigma$ is Ahlfors regular, \textit{i.e.} there is $r_0$ depending on $d,p,\rho_0$ and $\mathcal{L}(\nu)$ and $C$ depending only on $d,p$ such that for any $x \in \Sigma$ and $r \le r_0$ it holds that
    \[
      r \le \H^1(\Sigma \cap B_r(x))\le Cr. 			
    \]
  \end{enumerate}
\end{theorem}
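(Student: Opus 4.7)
The statement packages several independent conclusions (existence, a threshold $\Lambda_\star$, and items (1)--(3)), so I would prove each separately.

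First, existence, the threshold, and item (1). Existence for \eqref{problem.shape_optimization_relaxed} comes from the direct method: a minimizing sequence $(\nu_n)$ has bounded $p$-th moments and bounded $\mathcal{L}(\nu_n)$, so it is tight; extract a narrowly converging subsequence and conclude by narrow lower semicontinuity of $W_p^p(\rho_0,\cdot)$ and of $\mathcal{L}$ (the latter is built to this end in Section~\ref{subsec.deflength}). The threshold $\Lambda_\star$ is defined as the supremum of $\Lambda>0$ for which some non-Dirac $\nu$ beats the plateau value $m_p(\rho_0):=\min_{x}\int|x-y|^p\,d\rho_0(y)$; comparing a Dirac at the $L^p$-barycenter to a short segment through it shows $\Lambda_\star>0$, and monotonicity in $\Lambda$ of the optimal $\mathcal{L}$ separates the two regimes. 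For item (1), I would use that conditional on the support $\Sigma$, any optimal $\nu$ equals $(T_\Sigma)_\#\rho_0$, where $T_\Sigma$ is the nearest-point projection onto $\Sigma$ (well defined $\rho_0$-a.e.\ when $\rho_0\ll\H^1$); the regularity of $\rho_0$ then passes to $\nu$ through a direct density comparison along the fibres of $T_\Sigma$, and the $L^\infty(\H^1)$ bound transfers via a Jacobian estimate on these fibres.

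Second, item (2). The plan is to use the structural decomposition in $\mathbb{R}^2$ of a compact connected $\Sigma$ with $\H^1(\Sigma)<\infty$ into countably many Jordan arcs meeting at branch points, with $\H^1$-null complement. On each arc $\Gamma$ parametrized by arclength, write $\nu|_\Gamma=h_\Gamma(s)\,ds$. Testing the optimality of $\nu$ against mass-preserving variations concentrated inside $\Gamma$ (which preserve $\Sigma$ hence $\mathcal{L}$), combined with the hypothesis that $\rho_0$ gives no mass to $1$-sets (so that the Kantorovich potential is differentiable $\rho_0$-a.e.\ and the optimal plan is a transport map), forces $h_\Gamma$ to be essentially constant. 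The planar combinatorics at branch points (finitely many arcs at each, Jordan separation) then propagate the constant across $\Sigma$, and the normalization $\nu\in\mathcal{P}_p(\mathbb{R}^d)$ with $\supp\nu\subset\Sigma$ forces $h\equiv 1/\H^1(\Sigma)$, i.e.\ $\nu=\nu_\Sigma$.

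Third, item (3) and the main obstacle. The lower bound $\H^1(\Sigma\cap B_r(x))\ge r$ is elementary: for $r<\diam\Sigma/2$, connectedness forces a continuum inside $\Sigma\cap\overline{B_r(x)}$ joining $x$ to $\partial B_r(x)$. For the upper bound, I would use an optimality competitor that replaces $\Sigma\cap B_r(x)$ by a spider of $O(1)$ radii reconnecting the finitely many points of $\Sigma\cap\partial B_r(x)$, producing $\Sigma'$ with $\H^1(\Sigma)-\H^1(\Sigma')\ge \H^1(\Sigma\cap B_r(x))-Cr$, and redistribute the mass $\nu(B_r(x))$ onto $\Sigma'\cap\overline{B_{r}(x)}$. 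The additional Wasserstein cost is bounded by $(2r)^p\bigl(\nu(B_r(x))+\rho_0(B_r(x))\bigr)$, and the hypothesis $\rho_0\in L^{d/(d-1)}(\mathbb{R}^d)$ gives, via a trace/isoperimetric-type estimate, $\nu(B_r(x))+\rho_0(B_r(x))\le Cr\,\|\rho_0\|_{L^{d/(d-1)}}$, so optimality yields $\Lambda\bigl(\H^1(\Sigma\cap B_r(x))-Cr\bigr)\le C'r^{p+1}$, which is the desired upper bound for $r\le r_0$. The hardest step is item (2): propagating local constancy of $h_\Gamma$ to a global identification with $\nu_\Sigma$ genuinely uses planarity, both to bound the branching combinatorics and to exploit Jordan separation, and this is precisely why item (2) is restricted to $d=2$.
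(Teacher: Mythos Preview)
Your proposal rests on a misreading of the relaxed functional that infects all three items. You repeatedly treat $\mathcal{L}(\nu)$ as if it were $\H^1(\supp\nu)$, but $\mathcal{L}(\nu)=\min\{\alpha:\alpha\nu\ge\H^1\mres\supp\nu\}$ depends on the \emph{density} of $\nu$, not only on its support. In particular:

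\textbf{Item (1).} The claim ``conditional on $\Sigma$, any optimal $\nu$ equals $(T_\Sigma)_\#\rho_0$'' is false. Fixing $\Sigma$ and minimizing only $W_p^p(\rho_0,\cdot)$ over measures supported on $\Sigma$ would give the projection, but here one minimizes $W_p^p(\rho_0,\nu)+\Lambda\mathcal{L}(\nu)$, and $\mathcal{L}(\nu)$ is not constant over $\{\nu:\supp\nu=\Sigma\}$. The paper's Lemma~\ref{lemma.minimal_distance2Sigma} shows only that the \emph{excess} part $\nuexc=\nu-\alpha^{-1}\H^1\mres\Sigma$ is a projection of a \emph{portion} $\rhoexc$ of $\rho_0$; the remaining mass forms the constrained layer $\alpha^{-1}\H^1\mres\Sigma$. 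Since nothing forces $(T_\Sigma)_\#\rho_0\ge\alpha^{-1}\H^1\mres\Sigma$, your transfer-of-regularity argument collapses. The paper instead works only with $\nuexc$: it sets up localized variational problems (Lemma~\ref{lemma.auxiliary_problem}) and constructs spherical competitors (Lemma~\ref{lemma.curve_on_B2}) to obtain a uniform $L^\infty$ bound on the density of the family $\nu_\delta$ (Theorem~\ref{theorem.solutions_are_absolutely_continuous}).

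\textbf{Item (2).} The step ``variations concentrated inside $\Gamma$ preserve $\Sigma$ hence $\mathcal{L}$'' is again wrong: moving mass along an arc changes the infimum of the density and hence $\mathcal{L}$. So you have no first-variation argument forcing constancy of $h_\Gamma$. Moreover, your invocation of Jordan separation is not where planarity enters in the paper. The actual mechanism is entirely different: one blows up $\nuexc$ at a Lebesgue point of positive excess density, proves $\Gamma$-convergence of the localized problems to a limit functional $F$ on the tangent line (Theorem~\ref{theorem.Gamma_convergence}), and then in $\mathbb{R}^2$ builds an explicit competitor that redirects mass arriving from one side of the tangent onto a short orthogonal segment, strictly decreasing $F$ (Theorem~\ref{theorem.inR2_density_is_1}). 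Planarity is used precisely because ``one side'' of a line is a half-plane; in $\mathbb{R}^d$ with $d\ge 3$ mass arrives from all directions around the tangent and the competitor fails.

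\textbf{Item (3).} Two gaps. First, $\Sigma\cap\partial B_r(x)$ need not be finite; the paper uses the coarea inequality~\eqref{equation.area_formula_generalized_inequality} and a logarithmic averaging to find a good radius $\bar s$ where $\H^0(\Sigma\cap\partial B_{\bar s})$ is controlled by $\H^1(\Sigma\cap B_{\bar s})$. Second, your energy comparison is for the wrong functional: reducing $\H^1(\Sigma)$ to $\H^1(\Sigma')$ does not by itself reduce $\mathcal{L}$, so the inequality $\Lambda(\H^1(\Sigma\cap B_r)-Cr)\le C'r^{p+1}$ has no source. The paper's competitor keeps $\mathcal{L}(\nu')\le\alpha$ by design and obtains the contradiction purely on the Wasserstein side: mass coming from outside $B_{10r}$ into $B_r$ is redirected to radial ``spikes'' on $\partial B_{\bar s}$, which are strictly closer, and the integrability $\rho_0\in L^{d/(d-1)}$ is used only to ensure (via~\eqref{eq:Lr}) that enough of that mass indeed comes from far away. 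Your bound $\nu(B_r)\le Cr$ is circular, since $\nu(B_r)\ge\alpha^{-1}\H^1(\Sigma\cap B_r)$ is essentially the quantity you want to bound.
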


The paper is organized as follows: in Section~\ref{section.preliminary_properties} we recall a few tools from optimal transport and geometric measure theory. Next, in Section \ref{section.the_problem} we go through the definition of the length functional and its properties as well as the relaxed problem and the existence of a solution. In Section \ref{section.geometric_properties_transport_map} we discuss
the existence of $\Lambda_\star$.
In Section~\ref{section.solutions_are_absolutely_continuous}
(Theorem~\ref{theorem.solutions_are_absolutely_continuous}) we prove point~(1) from Theorem \ref{theorem.the_big_one}, while the existence is proved in Section~\ref{section.blowup} (Theorem~\ref{theorem.inR2_density_is_1}),
and the Ahlfors regularity is studied in Section~\ref{section.ahlfors_regularity}.

\section{Preliminaries}\label{section.preliminary_properties}
We start by introducing notions of convergence for sets and measures which will be useful to study problem \eqref{problem.shape_optimization} as well as the relaxed one \eqref{problem.shape_optimization_relaxed}. Next we describe some instrumental properties of the objects we shall use throughout the paper, namely the rectifiable sets and measures.

\subsection{Convergence of sets and measures}

\subsubsection{Hausdorff and Kuratowski convergence}\label{subsection.Hausdorff_Kuratoski}
We recall some useful definitions of convergence for sets, see for instance~\cite[Chap.~4]{rockafellar2009variational}, \cite[Chap.~6]{ambrosio2000functions}.

The Hausdorff distance between two sets $A$, $B$ is defined
as:
\begin{equation}
	\label{hausdorff.distance}
	d_H(A,B) \eqdef \max\left\{
		\sup_{a \in A} \dist(a, B), \sup_{b \in B} \dist(b, A)
	\right\}, 
\end{equation}
where $\dist(\cdot, A)$ denotes the distance function to the set $A$.
A sequence $\left(A_n\right)_{n \in \mathbb{N}}$ of closed subsets of $\RR^d$ converges in the Hausdorff sense to $A$ if $\lim_{n\to\infty} d_H(A_n, A)=0$, and we write $A_n \xrightarrow[n \to \infty]{d_H} A$.
One can prove that this notion of convergence is equivalent to uniform convergence of the distance functions. Since the latter are all 1-Lipschitz, as a consequence of Arzela-Ascoli's Theorem it follows that if the sequence is contained in a compact set, one can always extract a convergent subsequence. This compactness result is known as {\em Blaschke's Theorem}, see \cite[Theorem 6.1]{ambrosio2000functions}.

A sequence of closed sets $C_n$ converges in the sense of Kuratowski to $C$, and we write $C_n \xrightarrow[n \to \infty]{K} C$, whenever the two properties hold:
\begin{enumerate}
\item Given a sequence $x_n \in C_n$, all its cluster points are contained in $C$.
\item For all points $x\in C$ there exists a sequence $x_n \in C_n$, converging to $x$.
\end{enumerate}
Again, one can show that $C_n\to C$ in the sense of Kuratowski
if and only if $\dist(x,C_n)\to\dist(x,C)$ (possibly infinite if $C=\emptyset$) locally uniformly (see~\cite[Cor.~4.7]{rockafellar2009variational}). In addition, the Bolzano-Weierstrass property holds for the Kuratowski convergence as well, \textit{i.e.} any sequence of closed sets has a subsequence which converges, possibly to the empty set.

It is classical that Hausdorff and Kuratowski convergences coincide on sequences on uniformly bounded compact sets, the next lemma describes a more subtle relationship.
We prove it in Appendix~\ref{appendix.kuratowski_convergence}.
\begin{lemma}\label{lemma.localized_Kuratowski}
	Let $(C_n)_{n \in \mathbb{N}}$ be a sequence of closed sets in $\RR^d$, converging to $C$ in the sense of Kuratowski.
    Then, for any $x \in \RR^d$,
    \[
      C_n\cap\overline{B_R(x)} 
      \xrightarrow[n \to \infty]{d_H} 
      C\cap\overline{B_R(x)}, 
    \]
    for every radius $R>0$ such that $\overline{C\cap B_R(x)} = C\cap \overline{B}_R(x)$. Moreover, that condition holds for all $R>0$ except in a countable set.
\end{lemma}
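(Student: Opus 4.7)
Writing $A_n := C_n \cap \overline{B_R(x)}$ and $A := C \cap \overline{B_R(x)}$, the goal is to verify the two one-sided Hausdorff estimates
\[
\sup_{a_n \in A_n} \dist(a_n, A) \to 0 \qquad \text{and} \qquad \sup_{a \in A} \dist(a, A_n) \to 0,
\]
with the usual conventions when one of the sets is empty. The first estimate is the ``external'' direction and does not need the special-radius hypothesis: if it failed along some subsequence with witnesses $a_k \in A_{n_k}$ bounded away from $A$ by $\delta > 0$, compactness of $\overline{B_R(x)}$ would give a cluster point $a^\star \in \overline{B_R(x)}$, and Kuratowski property (1) would force $a^\star \in C$, hence $a^\star \in A$, contradicting $\dist(a^\star, A) \ge \delta$. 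The case $A = \emptyset$ is covered by the same argument, which forces $A_n = \emptyset$ for $n$ large.

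The ``internal'' direction is where the hypothesis $\overline{C \cap B_R(x)} = C \cap \overline{B_R(x)}$ is essential. Fix $a \in A$; by this density assumption there exists a sequence $b_m \in C \cap B_R(x)$ with $b_m \to a$. For each such $b_m$, Kuratowski property (2) yields approximants $c_{m,n} \in C_n$ converging to $b_m$, and because $b_m$ lies in the \emph{open} ball $B_R(x)$, one has $c_{m,n} \in B_R(x) \subset \overline{B_R(x)}$ for $n$ large enough, i.e.\ $c_{m,n} \in A_n$. This gives pointwise convergence $\dist(a, A_n) \to 0$ for every $a \in A$. Uniformity over $A$ follows from compactness: cover $A$ with finitely many $\varepsilon/3$-balls around points $a_j$, pick an approximant $b_j$ of each $a_j$ and then $c_{j,n} \in A_n$ close to $b_j$, and conclude by the triangle inequality that $\sup_{a \in A} \dist(a, A_n) \le \varepsilon$ for all $n$ large.

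For the countability claim, denote by $E$ the set of bad radii, and for each $R \in E$ pick a witness $y_R \in C \cap \partial B_R(x)$ together with $\varepsilon_R > 0$ such that $C \cap B_{\varepsilon_R}(y_R) \subset \{z : |z - x| > R\}$ (this is precisely the failure of $y_R \in \overline{C \cap B_R(x)}$). The key separation observation is: if $R_1 < R_2$ are both in $E$, then $y_{R_1} \in C$ with $|y_{R_1} - x| = R_1 < R_2$, so $y_{R_1}$ cannot belong to $B_{\varepsilon_{R_2}}(y_{R_2})$ (that would imply $|y_{R_1} - x| > R_2$). Hence $|y_{R_1} - y_{R_2}| \ge \varepsilon_{R_2}$. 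Stratifying $E_{n,M} := \{R \in E : R \le M, \ \varepsilon_R > 1/n\}$, the family $\{y_R : R \in E_{n,M}\}$ is $1/n$-separated inside the compact set $\overline{B_M(x)}$, therefore finite; writing $E = \bigcup_{n,M \in \NN} E_{n,M}$ yields countability.

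The main obstacle is the countability step, where one must extract just enough geometric structure from the mere failure of the topological identity to produce a pointwise separation estimate; the Hausdorff half of the lemma is essentially a careful packaging of the two Kuratowski axioms, the only subtlety being the use of the \emph{open} ball $B_R(x)$ in passing from $b_m$ to its approximants $c_{m,n}$ inside $A_n$.
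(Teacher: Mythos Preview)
Your proof is correct and takes a genuinely different route from the paper, especially for the countability claim.

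For the Hausdorff convergence, the paper argues via subsequences and Blaschke's theorem: it extracts a Hausdorff limit $C^R$ of $C_{n_k}\cap\overline{B_R}$ and shows $\overline{C\cap B_R}\subset C^R\subset C\cap\overline{B_R}$, so the limit is uniquely determined whenever the two extremes coincide. Your direct verification of both one-sided estimates is cleaner and avoids the subsequence machinery.

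For the countability of bad radii, the paper's argument is rather different: for each direction $\xi\in\partial B_1$ it observes that $R\mapsto \dist(R\xi,C\cap\overline{B_R})-R$ is nonincreasing, hence has only countably many discontinuity points; it then shows that the discontinuity locus depends Lipschitz-continuously on $\xi$, so a dense countable family of directions captures all bad radii. Your separation argument is more elementary and self-contained: the witnesses $y_R$ lie on distinct spheres and the exclusion balls $B_{\varepsilon_R}(y_R)$ give a one-sided separation which, after stratifying by $\varepsilon_R>1/n$ and $R\le M$, yields finiteness. One small slip: the inclusion should read $C\cap B_{\varepsilon_R}(y_R)\subset\{|z-x|\ge R\}$ rather than $\{|z-x|>R\}$, since $y_R$ itself sits on the sphere; the strictness of $R_1<R_2$ makes the separation step go through regardless.
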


\subsubsection{Convergence of measures} 
Given a Borel set $X\subset \RR^d$, we denote by $\mathcal{M}(X)$ (resp. $\mathcal{M}_+(X)$) the collection of the finite  (resp. finite positive) Radon measures on $X$.
The space of Borel probability measures on $X$ is denoted by $\mathcal{P}(X)$, and $\mathcal{P}_p(X)$ refers to its subset of probability measures with finite $p$-moment ($p\ge 1$).

Following~\cite{ambrosio2000functions}, we say that a sequence $\left(\mu_n\right)_{n \in \mathbb{N}}$ of Radon measures on $\RR^d$ locally weakly-$\star$ converges to some Radon measure $\mu$, if, for every continuous function with compact support $\phi \in C_c(\RR^d)$, 
	\begin{equation}\label{eqRadonweakstar}
	    \int_{\RR^d} \phi \dd \mu_n 
		\xrightarrow[n \to \infty]{}
		\int_{\RR^d} \phi \dd \mu.
	\end{equation}
Any sequence $(\mu_n)_{n\in\mathbb{N}}$ of Radon measures such that $\sup_{n\in\mathbb{N}} \abs{\mu_n}(K)<+\infty$ for every compact set $K\subset \RR^d$ has a locally weakly-$\star$ convergent subsequence.	
	
If $\left\{\mu_n\right\}_{n \in \mathbb{N}}\subset \mathcal{M}(\RR^d)$ is  a sequence of \emph{finite} Radon measures and~\eqref{eqRadonweakstar} holds for every bounded continuous function $\phi\in C_b(\RR^d)$, we say that $\mu_n$ narrowly converges  to $\mu$, and we write $\mu_n \xrightharpoonup[n \to \infty]{} \mu$. When $(\mu_n)_{n\in\mathbb{N}}$ is a sequence of probability measures, that convergence is often referred to as the \emph{weak convergence} of probability measures.

If $X$ is compact, any sequence of probability measures $\{\mu_n\}_{n\in\mathbb{N}}\subset \mathcal{P}(X)$ has a weakly convergent subsequence. 
More generally, if $X$ is not compact, compactness for the narrow convergence requires the assumptions of Prokhorov's Theorem,
see~\cite[Theorem 2.8]{ambrosio2021lectures}.

\subsubsection{Optimal transport and the Wasserstein distance} 
\label{sec:wasserstein}

The Wasserstein distances $W_p$ are defined through the value function of an optimal transport problem, see \cite{ambrosio2008gradient,santambrogio2015optimal,villani2009optimal} for details. Given two probability measures $\mu, \nu \in \mathcal{P}_p(\RR^d)$, we set 
\begin{equation}\label{wasserstein_distance}
	W_p^p(\mu, \nu) 
	\eqdef
	\min_{\gamma \in \Pi(\mu, \nu)}
	\int_{\RR^d\times \RR^d} |x - y|^p\dd\gamma,
\end{equation}
where 
$\disp 
    \Pi(\mu, \nu) \eqdef  
    \left\{ 
	    \gamma \in \mathcal{P}\left(\RR^d\times \RR^d\right) : 
	    \quad
			{\pi_{0}}_\sharp\gamma = \mu, \
	        {\pi_{1}}_\sharp\gamma = \nu
    \right\}
$
is the space of transport couplings, and $\pi_i$ denote the projections, {\em i.e.} $\pi_0(x,y) = x$ and $\pi_1(x,y) = y$. Whenever $\mu$ does not have atoms, the value of \eqref{wasserstein_distance} coincides with
\begin{equation}\label{wasserstein_distance.monge}
    \inf_{
		\substack{
			T_{\sharp}\mu = \nu
		}
	}
	\int_{\RR^d} |x - T(x)|^p\dd \mu,
\end{equation}
where the inf is taken over all measurable maps $T$ such that $T_\sharp\mu(A) = \nu(A) = \mu(T^{-1}(A))$ for any Borel set $A$.

The optimal transport problem can be analogously defined for any pair of positive $\mu, \nu$ in $\mathcal{M}_+(\RR^d)$. In this case, the Wasserstein distance becomes a 1-homogeneous functional and is finite if and only if the measures have finite $p$-moments and the same total mass $\mu(\RR^d) = \nu(\RR^d)$.


The Wasserstein distance is l.s.c.~with respect to the narrow convergence, and continuous in a compact domain, \cite[Lemma 4.3]{villani2009optimal}.

\newcommand{\Golab}{Go\l\k{a}b}
\subsection{\Golab's Theorem}
We now study the lower semicontinuity of~\eqref{problem.shape_optimization}.
``\Golab's Theorem''~\cite{Golab} shows that along sequences of connected sets, the length is lower semi-continuous with respect to the Hausdorff convergence~\cite[Chapter 10]{morel2012variational}.
It is of course also true if the sequence has a uniformly bounded number of connected components.

The issue is that the compactness of Hausdorff convergence is not transferred to the weak convergence of measures of the form $\H^1\mres \Sigma$ which may concentrate in the limit.
In general, one can prove the following: 
\begin{theorem}[Density version of \Golab's Theorem]\label{theorem.Golab_localversion}
	Let ${(\Sigma_n)}_{n \in \mathbb{N}}$ be a sequence of closed and connected subsets of $\RR^d$ converging in the sense of Kuratowski to some closed set $\Sigma$ and having locally uniformly finite length, {\em i.e.} for all $R > 0$
	\[
		\sup_{n \in \mathbb{N}} \H^1(\Sigma_n \cap B_R(x_0)) < +\infty.
	\]
	Define the measures $\mu_n \eqdef \H^1\mres \Sigma_n$, and let $\mu$ be a local weak-$\star$ cluster point of this sequence. Then $\supp\mu \subset \Sigma$ and it holds that 
	\[
		\mu \ge \H^1\mres \Sigma,
	\]
	in the sense of measures. 
\end{theorem}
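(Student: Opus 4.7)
The proof naturally splits into two parts. For the support inclusion, suppose $x \notin \Sigma$ and pick $r > 0$ with $\overline{B_r(x)} \cap \Sigma = \emptyset$. By Lemma~\ref{lemma.localized_Kuratowski} applied at a suitable radius $R > r$, $\Sigma_n \cap \overline{B_R(x)} \xrightarrow{d_H} \Sigma \cap \overline{B_R(x)} = \emptyset$, so $\Sigma_n \cap B_{r/2}(x) = \emptyset$ for $n$ large. Testing the weak-$\star$ convergence against any $\phi \in C_c(B_{r/2}(x))$ with $\phi \equiv 1$ on $B_{r/4}(x)$ yields $\mu(B_{r/4}(x)) = 0$, so $x \notin \spt \mu$.

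For the inequality $\mu \geq \H^1 \mres \Sigma$, the plan is to establish via Besicovitch differentiation that the lower $1$-density
\[
\Theta^1_*(\mu,x) := \liminf_{r \to 0} \frac{\mu(\overline{B_r(x)})}{2r}
\]
satisfies $\Theta^1_*(\mu, x) \geq 1$ for $\H^1$-a.e.\ $x \in \Sigma$. The classical Golab inequality, applied in each ball through Lemma~\ref{lemma.localized_Kuratowski}, yields $\H^1(\Sigma \cap \overline{B_R(x)}) \leq \liminf_n \H^1(\Sigma_n \cap \overline{B_R(x)}) < \infty$, so $\H^1 \mres \Sigma$ is locally finite and $\Sigma$ is locally $1$-rectifiable (by the Eilenberg-Harrold structure theorem applied to compact connected pieces). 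Consequently, for $\H^1$-a.e.\ $x \in \Sigma$, an approximate tangent line $L$ exists and the $\H^1$-density of $\Sigma$ at $x$ equals one.

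Fix such an $x$ and $\eta > 0$ small. For $r$ sufficiently small, the tangent property furnishes $y^{\pm} \in \Sigma$ within $\eta r$ of the two endpoints of the diameter $L \cap \overline{B_r(x)}$, so $|y^{+} - y^{-}| \geq 2r(1 - \eta)$. Kuratowski convergence produces approximants $y_n^{\pm} \in \Sigma_n$ with $y_n^{\pm} \to y^{\pm}$, and the connectedness of $\Sigma_n$ together with Eilenberg-Harrold provides a simple rectifiable arc $\gamma_n \subset \Sigma_n$ joining $y_n^{+}$ to $y_n^{-}$. A localization step -- confining the relevant mass of $\gamma_n$ to $\overline{B_{(1+\eta)r}(x)}$ by means of Lemma~\ref{lemma.localized_Kuratowski} -- together with the fact that the $1$-Lipschitz projection $\pi_L$ sends $\gamma_n$ onto a connected interval containing $[\pi_L(y_n^{-}), \pi_L(y_n^{+})]$ of length at least $2r(1 - 2\eta)$, yields
\[
\H^1\bigl(\Sigma_n \cap \overline{B_{(1+\eta)r}(x)}\bigr) \geq 2r(1 - C\eta)
\]
for $n$ large. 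The Portmanteau inequality for closed sets under weak-$\star$ convergence then gives $\mu(\overline{B_{(1+\eta)r}(x)}) \geq \limsup_n \mu_n(\overline{B_{(1+\eta)r}(x)}) \geq 2r(1-C\eta)$, and sending $\eta \to 0$ followed by $r \to 0$ delivers the required density bound.

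The main obstacle is the localization step just invoked: connectedness of $\Sigma_n$ alone provides a path of $\H^1$-length at least $|y_n^{+} - y_n^{-}|$, but that path might leave $\overline{B_{(1+\eta)r}(x)}$ and its portion inside the ball could a priori be short. Resolving this requires Kuratowski convergence in an essential way: any substantial excursion of $\gamma_n$ outside the enlarged ball would, in the limit, be seen as points of $\Sigma$ far from the tangent segment, contradicting the density-one / tangent structure of $\Sigma$ at $x$. Quantifying this trade-off -- how far $\Sigma_n$ can stray from $\Sigma$ in the ball as a function of the local length deficit -- is the delicate heart of the argument, and is precisely the density-refinement of the classical Golab theorem.
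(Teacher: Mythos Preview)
Your localization step is a genuine gap, and the handwave you offer to close it does not work. You argue that an excursion of $\gamma_n$ outside $\overline{B_{(1+\eta)r}(x)}$ would, in the Kuratowski limit, produce points of $\Sigma$ ``far from the tangent segment,'' contradicting the tangent structure at $x$. But Kuratowski convergence only says those excursion points accumulate on $\Sigma$; there is no reason they must accumulate \emph{inside} $B_{(1+\eta)r}(x)$. The set $\Sigma$ is perfectly allowed to have mass outside that ball, and the tangent/density information at $x$ says nothing about $\Sigma\setminus B_{(1+\eta)r}(x)$. So the excursion scenario is not ruled out, and you have no lower bound on $\H^1(\gamma_n\cap \overline{B_{(1+\eta)r}(x)})$ beyond the trivial one coming from the two entry/exit stubs, which is of order $\eta r$, not $2r$. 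Your projection argument onto $L$ only controls the length of the \emph{whole} arc $\gamma_n$, not the part inside the ball.

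The paper's argument sidesteps this entirely and never invokes rectifiability or tangent lines of $\Sigma$. Fix $y_0\in\Sigma$ and $r>0$ small enough that $\Sigma\setminus B_r(y_0)\neq\emptyset$ (hence, for large $n$, $\Sigma_n$ has points both inside and outside $\overline{B_r(y_0)}$). For $0<\delta<r$, consider the connected components of $\Sigma_n\cap\overline{B_r(y_0)}$ that meet $B_{r-\delta}(y_0)$. Each such component must reach $\partial B_r(y_0)$ by connectedness of $\Sigma_n$, so it has $\H^1$-length at least $\delta$; consequently the number of such components is at most $\delta^{-1}\sup_n\H^1(\Sigma_n\cap B_R)$, a uniform bound. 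Now apply the \emph{classical} Go\l ab theorem (for sequences with uniformly bounded number of connected components) to the union $\tilde\Sigma_n$ of these components. Combined with Lemma~\ref{lemma.localized_Kuratowski} to identify the Hausdorff limit inside $\overline{B_{r-\delta}(y_0)}$ with $\overline{\Sigma\cap B_{r-\delta}(y_0)}$, this yields
\[
\mu(\overline{B_r(y_0)}) \ge \limsup_n \H^1(\Sigma_n\cap B_r(y_0)) \ge \liminf_n \H^1(\tilde\Sigma_n\cap B_{r-\delta}(y_0)) \ge \H^1(\Sigma\cap B_{r-\delta}(y_0)),
\]
and letting $\delta\to 0$ gives $\mu(\overline{B_r(y_0)})\ge \H^1(\Sigma\cap B_r(y_0))$ for every $y_0\in\Sigma$ and small $r$, which is what you need. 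The point is that the connectedness of $\Sigma_n$ is used not to build a single long arc, but to bound the number of connected components in a localized picture so that the standard Go\l ab theorem applies directly.
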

      Such a  result is hidden in the proof in~\cite{ambrosio2004topics} of the usual thesis of \Golab's Theorem, see also~\cite{paolini2013existence}.
      Yet in this variant, we consider a Kuratowski convergence and do not restrict the sets to be uniformly bounded, or have bounded lengths.
      This is useful for the proof of Theorem~\ref{theorem.Gamma_convergence} where we consider sequences of blow-ups of sets. The proof of Theorem~\ref{theorem.Golab_localversion} is given in Appendix~\ref{appendix.kuratowski_convergence}.

\subsection{Rectifiable sets and measures}
We now introduce
the notions of {\em rectifiable sets} and {\em rectifiable measure}, 
which will be crucial for understanding the fine
properties of the elements of $\mathcal{A}$.
\begin{definition}
	Let $M \subset \RR^d$ be a Borel set and $k \in \mathbb{N}$, we say that $M$ is countably $\H^k$-rectifiable, or shortly $k$ rectifiable, if there are countably many Lipschitz functions $f_i: \RR^k \to \RR^d$ such that
	\[
		\H^k\left(
			M \setminus \bigcup_{i \in \mathbb{N}} f_i\left(\RR^k\right)
		\right)
		= 0. 	
	\]
	A Radon measure $\mu$ is said to be $k$-rectifiable if it is supported over a $k$-rectifiable set and $\mu \ll \H^k$.
\end{definition}

In the simple case $M = f(E)$, for $E \subset \RR^k$, one can define the tangent space at a point of differentiability of $f$ as 
\[
	\nabla f(z)\left(\RR^k\right), \text{ for $x = f(z)$}.
\]
This is a parametric definition that can be extended to $k$-rectifiable sets. It turns out the parametric notion of tangentiability can be expressed in terms of measure theory. Given a Borel set $M$, we set the measure $\mu = \H^k\mres M$, and we consider the family of blow-up measures
\begin{equation}\label{blowup_measure_operator}
	\mu_r \eqdef r^{-k} \Phi^{x,r}_\sharp \mu = \H^k\mres \left(\frac{M - x}{r}\right), \text{ for } 
	\Phi^{x,r}\eqdef \frac{\text{id} - x}{r}.
\end{equation}

If $M$ is countably $\H^k$-rectifiable, and $\H^k(M\cap K)<+\infty$ for every compact set $K$, we say that $M$ is locally $\H^k$-rectifiable, and then the blow-up Theorem, see \cite[Theorem 10.2]{maggi2012sets}, states that for $\H^k$-a.e. $x \in M$ this family of measures converges in the weak-$\star$ topology to a measure of the form $\H^k \mres \pi_x$, for a unique $k$-plane $\pi_x \in \text{G}(k,d)$, the Grassmannian of $k$-planes of $\RR^d$. 

More generally define the $k$-density, whenever the limit exists, of a Radon measure $\mu$ as 
\begin{equation}\label{eq.k_density}
	\theta_k(\mu,x) \eqdef \lim_{r \to 0^+} 
	\frac{\mu(B_r(x))}{\omega_k r^k} \text{ and }
	\theta_k(M,x) \eqdef \theta_k\left(\H^k\mres M, x\right), 
\end{equation}
where $\omega_k$ is the volume of the unit $k$-dimensional ball, see~\cite{ambrosio2000functions,maggi2012sets}. A direct consequence of the blow-up Theorem is that $\H^k$-a.e.~point of a $k$-rectifiable set has $k$-density $1$. Analogously for a $k$-rectifiable measure $\mu$ it holds that $\mu = \theta_k(\mu, x) \H^k\mres M$. 

The equivalence between all notions was completed with the work of Preiss and the notion of a tangent space to a measure, see for instance the monograph~\cite{delellis2006lecture}. If a measure ({\em resp.} a set) has a finite $k$-density, {\em i.e.}~the limit in~\eqref{eq.k_density} exists and is finite $\H^k$-a.e., then this measure ({\em resp.} set) is $k$-rectifiable. The previous discussion is summarized in the following theorem.

\begin{theorem}\label{theorem.rectifiability_tangentiability}
	Let $\mu$ be a Radon measure over $\RR^d$, the following are equivalent.
	\begin{enumerate}
		\item[(i)] $\mu$ is $k$-rectifiable
		\item[(ii)] For $\H^k$-a.e. $x \in \supp \mu$, it holds that
		\[
			r^{-k}\Phi^{x,r}_\sharp \mu \xrightharpoonup[r \to 0]{\star}	
			\theta_k(\mu,x)\H^k\mres \pi_x,
		\] 
		for a unique $k$-plane $\pi_x \in \text{G}(k,d)$.
		\item[(iii)] For $\H^k$-a.e. $x \in \supp \mu$, the $k$-density of $\mu$ in \eqref{eq.k_density} exists and is finite.
	\end{enumerate}
\end{theorem}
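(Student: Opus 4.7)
The plan is to prove the cycle (ii) $\Rightarrow$ (iii) $\Rightarrow$ (i) $\Rightarrow$ (ii); the main obstacle will be (iii) $\Rightarrow$ (i), Preiss's deep rectifiability theorem. The easy implication (ii) $\Rightarrow$ (iii) I would handle by testing the weak-$\star$ convergence against continuous functions sandwiching $\mathbf{1}_{B_1(0)}$: since $\pi_x$ is a $k$-plane through the origin and thus $\H^k\mres \pi_x$ puts no mass on $\partial B_1(0)$, evaluating gives
\[
\frac{\mu(B_r(x))}{\omega_k r^k}
= \frac{1}{\omega_k} r^{-k}\Phi^{x,r}_\sharp\mu(B_1(0))
\xrightarrow[r \to 0]{}
\frac{\theta_k(\mu,x)\H^k(\pi_x\cap B_1(0))}{\omega_k}
= \theta_k(\mu,x),
\]
which is precisely (iii).

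For (i) $\Rightarrow$ (ii), I would rely on the structure theorem for rectifiable measures. Writing $\mu = \theta_k(\mu,\cdot)\H^k\mres M$, the set $M$ decomposes, up to an $\H^k$-null set, as a countable disjoint union $\bigsqcup_i M_i$ of Borel subsets of $C^1$ $k$-submanifolds of $\RR^d$. At $\H^k$-a.e.~$x \in M_i$, three facts hold simultaneously: (a) $M_i$ has a classical tangent $k$-plane $\pi_x = T_x M_i$; (b) $x$ has $k$-density one with respect to $M_i$, so that $M$ and $M_i$ agree to first order around $x$; (c) Besicovitch's differentiation theorem forces $\theta_k(\mu,\cdot)$ to be \emph{almost constant equal to} $\theta_k(\mu,x)$ on small balls around $x$. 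Combining (a)--(c) and passing to the blow-up yields the desired convergence $r^{-k}\Phi^{x,r}_\sharp \mu \xrightharpoonup[r \to 0]{\star} \theta_k(\mu,x)\H^k\mres \pi_x$, establishing (ii).

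The hard part is (iii) $\Rightarrow$ (i), which is Preiss's celebrated theorem. An elementary Vitali covering argument shows first that a finite upper $k$-density $\mu$-a.e.~implies $\mu \ll \H^k$, so $\mu = g\, \H^k\mres E$ with $E = \supp\mu$ and $g$ Borel. The problem then reduces to proving that $E$ itself is $k$-rectifiable, and the heart of the matter lies in the analysis of the family of tangent measures at a typical point: existence of the density at $\H^k$-a.e.~point of $E$ forces, through a deep and delicate argument involving moments of tangent measures, every such tangent measure to be a constant multiple of $\H^k$ restricted to a $k$-plane (a ``flat'' measure), from which rectifiability follows. I would not reproduce this argument, referring instead to~\cite{delellis2006lecture} for the complete proof.
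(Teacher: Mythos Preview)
Your proposal is correct and aligns with the paper's treatment: the paper does not give its own proof of this theorem but presents it as a summary of classical results, citing the blow-up theorem from \cite{maggi2012sets} and Preiss's theorem via the monograph \cite{delellis2006lecture}. Your sketch is in fact more detailed than what the paper provides, and your identification of (iii) $\Rightarrow$ (i) as Preiss's theorem with a reference to \cite{delellis2006lecture} matches the paper's approach exactly.
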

\if{
\begin{theorem}[Rectifiability and tangentiability]
Let $M$ be a Borel subset of $\RR^d$, then the following are equivalent.
\begin{enumerate}
	\item[(i)] For $\H^k$-a.e. $x \in M$, it holds that
	\[
		\H^k\mres\left(\frac{M - x}{r}\right)
		\xrightharpoonup[r \to 0]{\star}	
		\H^k\mres T_xM,
	\] 
	for a unique $k$-plane $T_xM \in \text{G}(k,d)$, which is called the {\em approximate tangent space} of $M$ at $x$
\item[(ii)] $M$ is a $k$-rectifiable set, admitting a parametric representation of the form
	\[
		M = M_0 \cup \bigcup_{i \in \mathbb{N}} f_i(\RR^k),
	\] 
	with $\H^k(M_0) = 0$ and $f_i$ being Lipschitz functions for all $i \in \mathbb{N}$. In addition, for $\H^k$-a.e. $x \in M$, if $x = f_i(z)$, then the tangent space of $M$ at $x$ is
	\[
		T_xM = \nabla f_i(z)\left(\RR^k\right). 
	\] 
	\item[(iii)] For $\H^k$-a.e. $x \in M$ one has 
	$\Theta_k(M, x) = 1$ and there exists a unique $\pi_x \in \text{G}(k,d)$ such that for all $\alpha > 0$, one has 
	\[
		\lim_{r \to 0} r^{-k} \H^k\left(
			M\cap B_r(x)\setminus K_r(x, \pi_x, \alpha)
		\right)	
		= 0,
	\]  
	and $T_xM$ is the unique plane $\pi_x$ satisfying this property.
\end{enumerate}
\end{theorem}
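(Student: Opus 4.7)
The plan is to prove the cyclic implications (ii)$\Rightarrow$(i)$\Rightarrow$(iii)$\Rightarrow$(ii), following the classical pattern from geometric measure theory.

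\emph{Step 1: (ii)$\Rightarrow$(i).} I would first reduce to a single Lipschitz piece $M=f(E)$ with $f\colon\RR^k\to\RR^d$ Lipschitz and $E$ Borel. By Rademacher's theorem $f$ is differentiable at $\L^k$-a.e.\ $z\in E$, and the rescaled maps $y\mapsto (f(z+ry)-f(z))/r$ converge locally uniformly to $y\mapsto\nabla f(z)y$. A standard application of the area formula then gives $\H^k\mres\frac{M-f(z)}{r}\xrightharpoonup[r\to 0]{\star}\H^k\mres\nabla f(z)(\RR^k)$. For the countable union $M=M_0\cup\bigcup_i f_i(\RR^k)$, only one sheet can contribute at $\H^k$-a.e.\ point, since two overlapping Lipschitz images with positive $\H^k$-overlap cannot both have unit density at an interior overlap point.

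\emph{Step 2: (i)$\Rightarrow$(iii).} Testing the weak-$\star$ convergence against a continuous approximation of $\mathbf 1_{B_1(0)}$ yields $\omega_k\Theta_k(M,x)=\H^k(T_xM\cap B_1(0))=\omega_k$, hence $\Theta_k(M,x)=1$. For the cone estimate, $T_xM\subset K_1(0,T_xM,\alpha)$ for every $\alpha>0$, so testing against a continuous cutoff supported in the open set $B_1(0)\setminus K_1(0,T_xM,\alpha/2)$ gives $\lim_{r\to 0}r^{-k}\H^k(M\cap B_r(x)\setminus K_r(x,T_xM,\alpha))=0$. Uniqueness of $\pi_x=T_xM$ is immediate from uniqueness of weak-$\star$ limits.

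\emph{Step 3: (iii)$\Rightarrow$(ii) --- main obstacle.} This is the deep direction. Using Egoroff's theorem I would decompose $M$, up to an $\H^k$-null set, into countably many Borel sets $M_j$ on which both the density and the cone estimates are uniform: for some $r_j>0$, $\eps_j\to 0$ and small aperture $\alpha_j$, one has $\H^k(M\cap B_r(x)\setminus K_r(x,\pi_x,\alpha_j))\le\eps_j r^k$ for all $x\in M_j$ and $r\le r_j$, with all planes $\pi_x$ lying within angle $\alpha_j$ of a fixed reference plane $\pi_j$. The cone estimate combined with unit density then forces the orthogonal projection $P_{\pi_j}\colon M_j\to\pi_j$ to be locally injective with Lipschitz inverse on a set of nearly full measure in $M_j$. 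Whitney's extension theorem produces a Lipschitz map $g_j\colon\pi_j\cong\RR^k\to\pi_j^{\perp}$ whose graph $f_j(y)=(y,g_j(y))$ covers $M_j$ up to a small residual; absorbing residuals into the null part and iterating yields the required parametric representation, with $\nabla f_j$ at the projection of $x\in M_j$ having image $\pi_x=T_xM$ by construction.

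The technical heart lies in Step 3, where one must upgrade the measure-theoretic cone estimate into the pointwise bi-Lipschitz-projection property; this is precisely where unit density is indispensable, preventing the points of $M$ from accumulating toward the boundary of the cones and thereby justifying the application of Whitney's extension.
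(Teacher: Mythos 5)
Your outline is correct in substance, but it takes a genuinely different route from the paper. The paper proves this statement almost entirely by citation: the equivalence (i)$\Leftrightarrow$(ii) is quoted from Maggi (Theorems 10.2 and 10.8) and from the rectifiability criteria in Ambrosio--Fusco--Pallara, the parametric formula $T_xM=\nabla f_i(z)(\RR^k)$ is obtained from the \emph{locality} of approximate tangent planes (Maggi, Prop.~10.5) applied to each sheet $M_i=f_i(\RR^k)$, and the deep implication (iii)$\Rightarrow$(i),(ii) is quoted as the classical sufficiency criterion for rectifiability (De Lellis, Prop.~4.6). Only (i)$\Rightarrow$(iii) is argued directly, by evaluating the blow-up convergence on the complement of the cone --- which is exactly your Step~2. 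You instead sketch self-contained proofs of the hard implications, reproducing the standard textbook arguments (Rademacher plus the area formula for (ii)$\Rightarrow$(i); Egoroff, uniform cone estimates, Lipschitz projections and extension for (iii)$\Rightarrow$(ii)). Your cyclic scheme (ii)$\Rightarrow$(i)$\Rightarrow$(iii)$\Rightarrow$(ii) is logically sound and buys self-containedness, at the price of redoing a substantial amount of classical geometric measure theory that the paper simply references; note also that both treatments implicitly use that $\H^k\mres M$ is locally finite, which is needed for the blow-up measures in (i) to make sense.

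Two points in your sketch need repair. First, in Step~1 the justification for discarding the other sheets is wrong as stated: two overlapping Lipschitz images can perfectly well both have unit density at a.e.\ point of their overlap (take $f_1=f_2$). The correct argument is the one the paper's locality citation encodes: since $\H^k\mres M$ is locally finite, the Borel set $M\setminus f_i(\RR^k)$ has $\H^k$-density zero at $\H^k$-a.e.\ point of $f_i(\RR^k)$, so it contributes nothing to the blow-up there; and where two sheets overlap on a set of positive measure their tangent planes agree a.e., so the formula $T_xM=\nabla f_i(z)(\RR^k)$ is consistent. Second, in Step~3 the extension tool is the Lipschitz extension theorem (McShane/Kirszbraun), not Whitney's $C^1$ extension: the uniform cone estimates show that on each piece $M_j$ the orthogonal projection onto the reference plane is injective with Lipschitz inverse, so $M_j$ lies in a Lipschitz graph, and the graph map is then extended to the whole plane. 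To upgrade the measure-theoretic cone estimate of (iii) to this two-point cone property you must invoke the (lower) density bound at \emph{both} points of a close pair in $M_j$ --- a point $y\in M_j$ lying outside the enlarged cone around $\pi_x$ at scale $r$ carries mass comparable to $\mathrm{dist}(y,K)^k$ that would violate the $o(r^k)$ excess bound --- which is available after the Egoroff decomposition; this is exactly where the density hypothesis enters, as you correctly anticipate, and it deserves to be made explicit in a full write-up.
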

\begin{proof}
	The equivalence between points (i) and (ii) is given by Theorems 10.2 and 10.8 from \cite{maggi2012sets}, or the criteria for rectifiability of measures in \cite[Theorem 2.83]{ambrosio2000functions}. To prove the parametric characterization of the  tangent space consider a $f_i$ from the parametric representation of $M$ and define a new submanifold $M_i = f_i(\RR^k)$. For $\H^k$-a.e. $x \in M_i$ we have that $T_xM_i = \nabla f_i(z)(\RR^k)$, for $x = f_i(z)$. The locality property of the tangent spaces, \cite[Proposition 10.5]{maggi2012sets}, says that $T_xM = T_xM_i = \nabla f_i(z)(\RR^k)$ for $\H^k$-a.e. $x \in M \cap M_i$. 

	The fact that (iii) implies (i) and (ii), {\em i.e.} a.e. tangentiability implies rectifiability, is stated in the literature as a sufficient condition of rectifiability, see \cite[Proposition 4.6]{delellis2006lecture}. Weaker sufficient conditions can also be found in \cite{ambrosio2000functions,maggi2012sets}. 

	To see that (i) implies (iii) is a direct consequence of local weak-$\star$ convergence. Indeed, any plan $\pi_x \in \text{G}(k,d)$ satisfying 
	\[
		\H^k\mres\left(\frac{M - x}{r}\right)
		\xrightharpoonup[r \to 0]{\star}	
		\H^k\mres \pi_x, 	
	\]
	will be tangent to $M$, in the sense of Definition \ref{definition.tangent_plan}, at $x$ since for all $\alpha > 0$, evaluating both sides on $K_r(x, \pi_x, \alpha)^c$ one has 
	\[
		r^{-k} \H^k\left(
			M\cap B_r(x)\setminus K_r(x, \pi_x, \alpha)
		\right)	
		\xrightarrow[r \to 0]{}
		\H^k
		\left(
			\pi_x\cap B_r(x) \setminus 
			K(\pi_x, \alpha)
		\right)	
		= 0. 
	\]
	Since $T_xM$ is the unique $k$-plane satisfying the blow-up property, for $\H^k$-a.e. $x\in M$, it is the unique tangent plane to $M$ at $x$. 
\end{proof}
}\fi
\if{
One direct application of this is that if $M$ is a $k$-rectifiable set, then almost all of its points have density $1$. Since for any $x \in M$ admitting a tangent space $T_xM$, one has
\[
	\lim_{r \to 0} 
	\H^k\mres
	\left(
		\frac{ M - x}{r}
	\right)\left(B_1\right)
	= \H^k\mres T_xM(B_1) = \omega_k,
\]
as  $\H^k(\partial (T_xM\cap B_1)) = 0$, and we conclude that $\Theta_k(M, x) = 1$. Therefore, we say that $x$ is a {\em rectifiability point} of $M$, whenever it admits a tangent space $T_xM$. 
}\fi 

In the previous Theorem, if we take $\mu = \H^k\mres M$ where $M$ is a countably $\H^k$-rectifiable set we define the {\em approximate tangent space} of $M$ at $x$ as $T_x M \eqdef \pi_x$, where $\pi_x$ is the unique $k$-plane from point (ii).

\begin{definition}\label{definiton.rectifiability_point}
	Let $M\subset \RR^d$ be a $k$-rectifiable set. We say that $x \in M$ is a rectifiability point when the locally weakly-$\star$ convergence of point (ii) from Theorem~\ref{theorem.rectifiability_tangentiability} holds, with $\mu = \H^k\mres M$ and $\theta_k(\mu,x)=1$. 
\end{definition}

Now we pass to our case of interest, of compact connected sets $\Sigma$ with finite length, $\mathcal{H}^1(\Sigma) < +\infty$, in view of~\eqref{admissible_set}. From~\cite[Prop.~30.1 and Cor.~30.2]{david2006singular} or~\cite[Thm.~4.4]{alberti2017structure}, any compact connected set with finite length is also path-connected, \textit{i.e.} any pair of its points can be joined by a continuous arc. 

Such sets are also known to be $1$-rectifiable, see~\cite[Thm.~4.4.8]{ambrosio2004topics}, and hence they enjoy the properties of Theorem \ref{theorem.rectifiability_tangentiability}. In the next Lemma, we show that the blow-up of some $\Sigma \in \mathcal{A}$ around a rectifiability point is precisely its approximate tangent space. 

\begin{lemma}\label{lemma.blowup_domain_measure}
  Given $\Sigma \in \mathcal{A}$, then for $\H^1$-a.e. $y \in \Sigma$, it holds that 
  \[
    \frac{\Sigma - y}{r} \xrightarrow[r \to 0^+]{K} T_y\Sigma 
	\quad \text{and} \quad
	\frac{\Sigma - y}{r}\cap \overline{B_R(0)} \xrightarrow[r \to 0^+]{d_H} T_y\Sigma\cap \overline{B_R(0)}, \text{ for all $R>0$.}
  \]
\end{lemma}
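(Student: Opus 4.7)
The plan is to upgrade the weak-$\star$ measure convergence coming from the blow-up theorem to a Kuratowski and then Hausdorff convergence of sets, with the connectedness of $\Sigma$ (via the reach-from-origin length bound) as the key extra ingredient. First I would fix $y\in\Sigma$ to be simultaneously a rectifiability point in the sense of Definition~\ref{definiton.rectifiability_point} and a point where $\theta_1(\Sigma,y)=1$; both conditions hold for $\H^1$-a.e.~$y\in\Sigma$ since $\Sigma\in\mathcal{A}$ is $1$-rectifiable. Writing $\Sigma_r\eqdef(\Sigma-y)/r$, Theorem~\ref{theorem.rectifiability_tangentiability}(ii) then yields $\H^1\mres\Sigma_r\xrightharpoonup[r\to0]{\star}\H^1\mres T_y\Sigma$, and the equality $\H^1(\Sigma_r\cap B_R(0))=r^{-1}\H^1(\Sigma\cap B_{Rr}(y))$ combined with density one makes this quantity uniformly bounded in $r$ for each fixed $R>0$.

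Second, I would show that every subsequential Kuratowski limit $\Sigma'$ of $\Sigma_{r_n}$ (which exists by Kuratowski compactness) coincides with $T_y\Sigma$. For the inclusion $T_y\Sigma\subseteq\Sigma'$, any $q\in T_y\Sigma$ lies in the support of the weak-$\star$ limit, so $\H^1(\Sigma_{r_n}\cap B_\epsilon(q))>0$ eventually for each $\epsilon>0$, producing $q_n\in\Sigma_{r_n}$ with $q_n\to q$ and hence $q\in\Sigma'$. For the reverse inclusion, suppose for contradiction that $p\in\Sigma'\setminus T_y\Sigma$ and set $2\epsilon\eqdef\dist(p,T_y\Sigma)>0$, noting $p\neq0$ since $0\in T_y\Sigma$. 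Pick $p_n\in\Sigma_{r_n}$ with $p_n\to p$. For $n$ large, $0\in\Sigma_{r_n}\setminus\overline{B_\epsilon(p_n)}$, so the connectedness of $\Sigma_{r_n}$ forces a continuum inside $\Sigma_{r_n}\cap\overline{B_\epsilon(p_n)}$ joining $p_n$ to $\partial B_\epsilon(p_n)$, whence $\H^1(\Sigma_{r_n}\cap\overline{B_{2\epsilon}(p)})\ge\H^1(\Sigma_{r_n}\cap\overline{B_\epsilon(p_n)})\ge\epsilon$. The upper semicontinuity of the mass of a compact set under weak-$\star$ convergence then gives $\epsilon\le\H^1(T_y\Sigma\cap\overline{B_{2\epsilon}(p)})=0$, since $\overline{B_{2\epsilon}(p)}\cap T_y\Sigma=\emptyset$ by choice of $\epsilon$, a contradiction.

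Subsequential uniqueness yields Kuratowski convergence of the full family $\Sigma_r\to T_y\Sigma$, and the Hausdorff convergence in balls follows from Lemma~\ref{lemma.localized_Kuratowski} applied at the origin: since $T_y\Sigma$ is an affine line through $0$, the condition $\overline{T_y\Sigma\cap B_R(0)}=T_y\Sigma\cap\overline{B_R(0)}$ is trivially satisfied for \emph{every} $R>0$, so no countable exceptional set of radii arises. The main obstacle is the inclusion $\Sigma'\subseteq T_y\Sigma$: weak-$\star$ convergence of measures alone does not preclude the Kuratowski limit from picking up extra pieces of negligible $\H^1$-mass, and it is precisely the connectedness of each $\Sigma_r$, through the elementary length-from-origin lower bound, that rules out such pieces once the blow-up measure is known to sit on the tangent line.
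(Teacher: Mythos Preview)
Your argument is correct and takes a route that differs from the paper's in one key step. Both proofs fix a rectifiability point, pass to an arbitrary subsequential Kuratowski limit $\Sigma'$ (or $T$), and establish $T_y\Sigma\subseteq\Sigma'$ via the support of the weak-$\star$ limit. The difference lies in the reverse inclusion $\Sigma'\subseteq T_y\Sigma$: the paper invokes the density version of Go\l ab's theorem (Theorem~\ref{theorem.Golab_localversion}) to obtain $\H^1(\Sigma'\cap B_R)\le\liminf_k\H^1(\Sigma_{r_k}\cap B_R)=\H^1(T_y\Sigma\cap B_R)$, showing $\Sigma'\setminus T_y\Sigma$ is $\H^1$-null, and then uses connectedness of the \emph{limit} set $\Sigma'$ to rule out stray points. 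You instead exploit connectedness of the \emph{approximants} $\Sigma_{r_n}$ directly, via the elementary lower bound $\H^1(\Sigma_{r_n}\cap\overline{B_\epsilon(p_n)})\ge\epsilon$, and then pass to the limit using upper semicontinuity of masses of compact sets under weak-$\star$ convergence. Your approach is more elementary in that it bypasses Go\l ab's theorem entirely for this lemma; the paper's approach yields as a by-product the quantitative estimate~\eqref{eq.blowup_domain_measure}. You also make explicit, which the paper does not, that Lemma~\ref{lemma.localized_Kuratowski} gives Hausdorff convergence for \emph{every} $R>0$ because the closure condition is trivially satisfied for a line through the origin.

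One small slip: with $2\epsilon=\dist(p,T_y\Sigma)$ the set $\overline{B_{2\epsilon}(p)}\cap T_y\Sigma$ is not empty but consists of the single foot of the perpendicular; this is harmless since its $\H^1$-measure is still zero, but you might prefer to take $2\epsilon<\dist(p,T_y\Sigma)$ to make the claim literally true.
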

\begin{proof} 
  First we take a rectifiability point  $y \in \Sigma$ with tangent space $T_y\Sigma$,
  by Theorem~\ref{theorem.rectifiability_tangentiability} such points cover $\H^1$-a.a.~of $\Sigma$.
  In particular, point \textit{(ii)} of the Theorem shows that $\H^1\mres((\Sigma-y)/r)\xrightharpoonup[r \to 0]{\star} \H^1\mres T_y\Sigma$.
  Let $T$ be the (Kuratowski) limit of a subsequence $(\Sigma-y)/r_k$.
  Clearly, the limit measure $\H^1\mres T_y\Sigma$ is supported by $T$, hence $T_y\Sigma\subset T$.
  Thanks
  to Lemma~\ref{lemma.localized_Kuratowski} and Theorem~\ref{theorem.Golab_localversion}, for
  almost all $R>0$,
 \begin{equation}
    \H^1(T\cap B_R) \le \liminf_{k\to\infty} \H^1\left(\frac{\Sigma-y}{r_k}\cap B_R\right)
    = \H^1(T_y\Sigma\cap B_R),\label{eq.blowup_domain_measure}
 \end{equation}
which shows that up to a $\H^1$-negligible set, $T=T_y\Sigma$.

Notice that, if there is some $x \in T\setminus T_y\Sigma$, we may consider some ball $B_s(x)$ which does not intersect $T_y\Sigma$. Since $T$ is the limit of connected sets, $x$ must be path-connected in $T$ to some point in $(B_s(x))^c$, so that $\H^1(T\cap B_s(x))\ge s$. This contradicts~\eqref{eq.blowup_domain_measure}. Hence $T=T_y\Sigma$, and
  is independent on the subsequence, and we deduce that $(\Sigma-y)/r\stackrel{K}{\to} T_y\Sigma$. The convergence in the Hausdorff distance follows from Lemma~\ref{lemma.localized_Kuratowski}.
\end{proof}

\section{The length functional and the relaxed problem}\label{section.the_problem}
If a minimizing sequence $\Sigma_n$ converges to some set $\Sigma$, we cannot expect weak cluster points of the measures $\nu_{\Sigma_n}$ to have the form $\nu_\Sigma$, see Figure~\ref{figure.concentration_sets_measure}. Hence the objective of \eqref{problem.shape_optimization} is not lower semi-continuous for the narrow convergence, and, in this section, we introduce a relaxation for \eqref{problem.shape_optimization}. First, we define a functional which extends the length of the support  and we discuss some of its properties, then we use it to define the relaxed problem.

\subsection{Definition and elementary properties}\label{subsec.deflength}
Recalling that $\mathcal{A}$ is the collection of the compact connected sets $\Sigma \subset \RR^d$ with $0<\H^1(\Sigma)<+\infty$, we consider
\begin{equation}\label{eq:defl}
	\ell : \mathcal{P}(\RR^d) \ni \nu \mapsto 
	\left\{
		\begin{array}{cc}
			\H^1(\Sigma), & \text{ if  $\displaystyle \nu=\frac{1}{\H^1(\Sigma)}\H^1\mres \Sigma$ for some $\Sigma \in \mathcal{A}$},\\ 
			+\infty,& \text{ otherwise,}
		\end{array}
	\right.
\end{equation}
so that \eqref{problem.shape_optimization} becomes $\inf_\nu W_p^p(\rho_0, \nu) + \Lambda\ell(\nu)$. As discussed above, $\ell$ is not l.s.c., hence we introduce the following relaxation, which we call the {\em length functional}. For any $\nu \in \mathcal{P}(\RR^d)$, we define
\begin{equation}
	\mathcal{L}(\nu)
	\eqdef 
	\begin{cases}
		\inf \enscond{\alpha \ge 0}{\alpha \nu \ge \H^1\mres \supp \nu},  & \mbox{if $\supp \nu$ is connected,}\\
		+\infty, & \mbox{otherwise,}
	\end{cases}\label{lengthalpha}
\end{equation}
with the convention that $\inf \emptyset \eqdef +\infty$. Notice that, since $\nu$ is a probability measure, $\mathcal{L}(\nu)\geq \H^1(\supp \nu)$, and that $\mathcal{L}(\nu)=0$ if and only if $\nu = \delta_x$  for some $x \in \RR^d$. As a result, $0<\mathcal{L}(\nu) < \infty$ if and only if $\supp \nu \in \mathcal{A}$. Moreover, for any $\Sigma \in \mathcal{A}$ and $ \displaystyle \nu_\Sigma\eqdef\frac{1}{\H^1(\Sigma)} \H^1\mres \Sigma$, we have $\mathcal{L}(\nu_\Sigma) = \H^1(\Sigma)=\ell(\nu_\Sigma)$. 

\begin{remark}
Definition~\eqref{lengthalpha} also makes sense for any positive measure $\mu \in \mathcal{M}_+(\RR^d)$. In that case, thanks to Theorem~\ref{theorem.Golab_localversion}, it may be easily shown to be
lower semi-continuous with respect to the weak convergence,
defining $\mathcal{L}(0)=0$ (see also Section~\ref{sec.length_functional_is_lsc}). 
Yet then, of course, even for uniformly distributed measures such as $\nu = \theta\H^1\mres \Sigma$ for some $\theta> 0$, its value does not coincide with the length of the support anymore (it rather is $\H^1(\Sigma)/\nu(\RR^d)$).
\end{remark}

In Section~\ref{sec.length_functional_is_lsc} below, we prove that $\mathcal{L}$ is  the lower semi-continuous enveloppe of $\ell$ for the narrow topology of probability measures. Before that,  let us discuss some alternative formulations for $\mathcal{L}$. Following \cite[Sec. 2.4]{ambrosio2000functions}, we consider the upper derivative,
\begin{align}
\forall x \in \supp\nu,\quad 	D^+_\nu(\H^1\mres \supp\nu)(x) \eqdef \limsup_{r\to 0^+} \frac{\H^1(B_r(x)\cap \supp \nu)}{\nu(B_r(x))}.
\end{align}

\begin{proposition}[Alternative definitions of $\mathcal{L}$]
Let $\nu \in \mathcal{P}(\RR^d)$ be such that $\supp \nu$ is connected. Then

\begin{align}
	\mathcal{L}(\nu) &=\sup \enscond{\frac{\H^1(U \cap \supp \nu)}{\nu(U)}}{U \mbox{ open}, U\cap \supp \nu\neq \emptyset}\label{lengthopen}\\ 
		&=\sup \enscond{\frac{\H^1(B_r(x)\cap \supp \nu)}{\nu(B_r(x))}}{r>0, x \in \supp \nu} \label{lengthball}\\
		&=	
		\norm{D^+_\nu(\H^1\mres \supp\nu)}_{\infty},\label{lengthderiv}
\end{align}
where $\norm{\cdot}_\infty$ denotes the supremum norm over $\supp \nu$. 
\end{proposition}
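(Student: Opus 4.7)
Let $\mu := \H^1\mres\supp\nu$ and abbreviate the right-hand sides of \eqref{lengthopen}, \eqref{lengthball}, \eqref{lengthderiv} by $M_{\mathrm{op}}$, $M_{\mathrm{ball}}$, $M_D$ respectively. I would first establish the easy chain $M_D \le M_{\mathrm{ball}} \le M_{\mathrm{op}} \le \L(\nu)$: the first inequality since a $\limsup$ as $r\to 0^+$ is dominated by the supremum over all radii at the same point; the second since open balls are open sets; the third since, whenever $\L(\nu)<+\infty$, the defining inequality $\L(\nu)\,\nu \ge \mu$ evaluated on any open $U$ with $U\cap\supp\nu\neq\emptyset$ gives $\H^1(U\cap\supp\nu)/\nu(U) \le \L(\nu)$ (the case $\L(\nu) = +\infty$ being vacuous). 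Only the reverse inequality $\L(\nu)\le M_D$ carries real content.

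Assume $M_D<+\infty$ (otherwise there is nothing to prove). Fix $\epsilon>0$ and an open set $U\subset\mathbb{R}^d$. For every $x\in U\cap\supp\nu$ the bound $D^+_\nu\mu(x)\le M_D$ provides arbitrarily small radii $r>0$ such that $\overline{B_r(x)}\subset U$ and
\[
\mu(\overline{B_r(x)}) \;\le\; (M_D+\epsilon)\,\nu(\overline{B_r(x)}),
\]
the ratio being well defined since $x\in\supp\nu$ forces $\nu(B_r(x))>0$. These closed balls form a fine cover of $U\cap\supp\nu$, and the Vitali--Besicovitch covering theorem (see e.g.~\cite[Thm.~2.19]{ambrosio2000functions}) extracts a countable disjoint subfamily $\{\overline{B_{r_i}(x_i)}\}_i$ covering $\mu$-almost all of $U\cap\supp\nu$. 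Summing,
\[
\mu(U) \;=\; \mu(U\cap\supp\nu) \;\le\; \sum_i \mu(\overline{B_{r_i}(x_i)}) \;\le\; (M_D+\epsilon)\sum_i \nu(\overline{B_{r_i}(x_i)}) \;\le\; (M_D+\epsilon)\,\nu(U).
\]
Letting $\epsilon\to 0^+$ gives $\mu(U)\le M_D\,\nu(U)$ for every open $U$, and by outer regularity this transfers to arbitrary Borel $B$: pick open $U\supset B$ with $\nu(U)\le\nu(B)+\epsilon$, so $\mu(B)\le \mu(U)\le M_D\bigl(\nu(B)+\epsilon\bigr)$, and send $\epsilon\to 0$. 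Thus $M_D\nu\ge\mu$, which by the very definition of $\L$ yields $\L(\nu)\le M_D$.

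The main technical point is justifying the Vitali--Besicovitch step, which requires $\mu$ to be locally finite -- not granted a priori since we do not yet know $\H^1(\supp\nu)<+\infty$. However, the hypothesis $M_D<+\infty$ itself supplies the local control $\mu(\overline{B_r(x)})\le(M_D+\epsilon)\nu(\overline{B_r(x)})\le M_D+\epsilon$ for each $x\in\supp\nu$ and suitable small $r$, so $\mu$ is locally finite on $\supp\nu$ (and vanishes outside it) and the covering argument applies. The degenerate case $\nu=\delta_{x_0}$, in which $\supp\nu=\{x_0\}$ and all four quantities vanish, is handled trivially by the chain above.
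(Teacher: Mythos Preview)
Your proof is correct and follows essentially the same strategy as the paper: both establish the trivial chain $M_D \le M_{\mathrm{ball}} \le M_{\mathrm{op}} \le \L(\nu)$ and then close the loop by showing $\L(\nu)\le M_D$, the key step in each case being to first verify that $\mu=\H^1\mres\supp\nu$ is locally finite (hence Radon) using the pointwise bound on $D^+_\nu\mu$. The only cosmetic difference is that the paper then invokes \cite[Prop.~2.21]{ambrosio2000functions} as a black box to obtain $\mu\le M_D\,\nu$, whereas you essentially inline the proof of that proposition via the Vitali--Besicovitch covering theorem \cite[Thm.~2.19]{ambrosio2000functions} and outer regularity; the underlying mechanism is the same.
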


\begin{proof}
It is immediate that 
	\begin{align*}
		\left(\mbox{R.H.S. of \eqref{lengthalpha}}\right)\geq \left(\mbox{R.H.S. of \eqref{lengthopen}}\right)\geq \left(\mbox{R.H.S. of \eqref{lengthball}}\right)\geq \left(\mbox{R.H.S. of \eqref{lengthderiv}}\right).
\end{align*}
Now, assume that $\norm{D^+_\nu(\H^1\mres \supp\nu)}_{\infty}<+\infty$ and let $\alpha > \norm{D^+_\nu(\H^1\mres \supp\nu)}_{\infty}$. For every compact set $K \subset \RR^d$ and every $x \in K\cap \left(\supp \nu\right)$, there is some $r(x)>0$ such that $\H^1\left(B_r(x)\cap \left(\supp \nu\right)\right)\leq \alpha \nu(B_r(x))$. We may extract from the  covering $(B_{r(x)}(x))_{x \in K\cap \left(\supp\nu\right)}$ with open sets a finite covering $(B_{r_i}(x_i))_{i=1}^{N}$ of $K\cap \left(\supp \nu\right)$. As a result
\begin{align*}
	\H^1(K\cap \left(\supp\nu\right))\leq \sum_{i=1}^{N} \alpha \nu(B_{r_i}(x_i))\leq N\alpha<+\infty, 
\end{align*}
so that $\H^1\mres\left(\supp\nu\right)$ is a Radon measure. We may thus apply~\cite[Prop. 2.21]{ambrosio2000functions} to deduce
\begin{align*}
	\left(\mbox{R.H.S. of \eqref{lengthderiv}}\right)\geq \left(\mbox{R.H.S. of \eqref{lengthalpha}}\right). 
\end{align*}
If $\norm{D^+_\nu(\H^1\mres \supp\nu)}_{\infty}=+\infty$, the inequality holds trivially, which completes the proof.
\end{proof}

The length functional inherits some of the properties of the $\H^1$ measure.  
\begin{proposition}\label{prop.length.lipschitz}
	Let $f\colon \RR^d\rightarrow \RR^d$, be a $k$-Lipschitz function, with $k>0$. Then 
	\begin{align}
\forall \nu \in \mathcal{P}(\RR^d),\quad		\L(f_\sharp\nu)\leq k \L(\nu). \label{eq.lipschitzlength}
	\end{align}
\end{proposition}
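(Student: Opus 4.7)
The plan is to reduce the inequality~\eqref{eq.lipschitzlength} to the well-known contraction property of $\H^1$ under Lipschitz maps, namely $\H^1(f(A)) \le k\,\H^1(A)$ for every Borel set $A$. Throughout I may assume $\L(\nu)<+\infty$, since otherwise the inequality is trivial; in particular $\supp\nu$ is connected and, by the definition~\eqref{lengthalpha} (together with the subsequent remark that $0<\L(\nu)<\infty$ forces $\supp\nu\in\cA$, and $\L(\nu)=0$ forces $\nu=\delta_x$), $\supp\nu$ is compact.

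First I would identify the support of the pushforward. Since $f$ is continuous and $\supp\nu$ is compact, $f(\supp\nu)$ is compact, hence closed, and a direct verification from the definitions of support and pushforward gives
\[
\supp(f_\sharp\nu) \;=\; \overline{f(\supp\nu)} \;=\; f(\supp\nu).
\]
The continuous image of a connected set being connected, $\supp(f_\sharp\nu)$ is connected, so the first case of~\eqref{lengthalpha} applies to $f_\sharp\nu$.

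Next, set $\alpha\eqdef \L(\nu)$, so that $\alpha\nu\ge \H^1\mres\supp\nu$ as measures. For any Borel set $B\subset\RR^d$, using that $B\cap f(\supp\nu)=f\bigl(f^{-1}(B)\cap\supp\nu\bigr)$, the Lipschitz contraction of $\H^1$, and the defining inequality for $\alpha$, I would chain
\[
\H^1\bigl(B\cap \supp(f_\sharp\nu)\bigr)
\;=\; \H^1\bigl(f(f^{-1}(B)\cap\supp\nu)\bigr)
\;\le\; k\,\H^1\bigl(f^{-1}(B)\cap\supp\nu\bigr)
\;\le\; k\alpha\,\nu\bigl(f^{-1}(B)\bigr)
\;=\; k\alpha\,(f_\sharp\nu)(B).
\]
This shows $k\alpha$ is admissible in the definition of $\L(f_\sharp\nu)$, yielding $\L(f_\sharp\nu)\le k\alpha= k\L(\nu)$.

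There is no real obstacle in this argument; the only care needed is the measurability step (the Lipschitz map $f$ is Borel, so $f^{-1}(B)$ is Borel whenever $B$ is, and $\H^1$ remains well-defined on the analytic set $f(f^{-1}(B)\cap\supp\nu)$). The two degenerate cases ($\nu$ a Dirac, giving $\L(\nu)=\L(f_\sharp\nu)=0$; and $\L(\nu)=+\infty$) are immediate.
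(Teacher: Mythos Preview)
Your proof is correct and follows essentially the same approach as the paper. The only cosmetic difference is that the paper works with the open-set characterization~\eqref{lengthopen} (bounding the ratio $\H^1(U\cap\supp(f_\sharp\nu))/f_\sharp\nu(U)$ for open $U$), whereas you work directly with the defining inequality~\eqref{lengthalpha} over all Borel sets; in both cases the core step is the set identity $B\cap f(\supp\nu)\subset f(f^{-1}(B)\cap\supp\nu)$ combined with the Lipschitz contraction of $\H^1$.
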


\begin{proof}
	If $\L(\nu)=+\infty$, there is nothing to prove. Otherwise, $\supp \nu$ is compact, and $\supp(f_\sharp \nu)= f(\supp \nu)$. 
Moreover, for any open set $U \subset \RR^d$, since $f^{-1}(U)$ is open, 
\begin{align*}
U\cap \left(\supp f_\sharp \nu\right)\neq \emptyset \Longleftrightarrow \nu(f^{-1}(U))>0  \Longleftrightarrow f^{-1}(U)\cap \left(\supp \nu\right)\neq \emptyset.
\end{align*}

Now, let $U$ be an open set which intersects $\supp(f_\sharp \nu)$. Using that 
\begin{align*}
U\cap f(\supp \nu) \subset f\left(f^{-1}(U)\cap \supp \nu\right)
\end{align*}
we get 
\begin{align*}
	\frac{\H^1\left(U\cap \supp (f_\sharp \nu)\right)}{f_\sharp\nu(U)}= \frac{\H^1\left(U\cap f(\supp \nu))\right)}{\nu(f^{-1}(U))}
									  &\leq \frac{\H^1\left(f\left(f^{-1}(U)\cap \supp \nu\right)\right)}{\nu(f^{-1}(U))}\\
									  &\leq k\frac{\H^1\left(f^{-1}(U)\cap \supp \nu\right)}{\nu(f^{-1}(U))}\\
									  &\leq  k\mathcal{L}(\nu)
\end{align*}
since $f^{-1}(U)$ is an open set which intersects $\supp \nu$.
Taking the supremum over all $U$ yields the claimed inequality.
\end{proof}

It is also possible to express the length-functional using the Besicovitch differentiation theorem~\cite[Thm.~2.22]{ambrosio2000functions}. Assume that $\H^1(\supp \nu)<+\infty$ (otherwise $\mathcal{L}(\nu)=+\infty$). Then, the measure  $\H^1\mres \supp\nu$ is Radon, and the limit
\begin{align}
	D_\nu(\H^1\mres \supp\nu)(x)&\eqdef \lim_{r \to 0^+} \frac{\H^1(B_r(x)\cap \supp \nu)}{\nu(B_r(x))} \\ \bigg(\mbox{resp. } D_{\H^1\mres \supp\nu}(\nu)(x)&\eqdef \lim_{r \to 0^+}\frac{\nu(B_r(x))}{\H^1(B_r(x)\cap \supp \nu)}\bigg) 
\end{align}
exists for $\nu$-a.e. $x$ (resp. $\H^1\mres \supp\nu$-a.e. $x$).
\begin{proposition}[Alternative definitions, II]\label{prop.length.besico}
	Let $\nu \in \mathcal{P}(\RR^d)$ such that $\supp \nu$ is connected and $\H^1(\supp \nu)<+\infty$. Then
\begin{align}
	\mathcal{L}(\nu) &=
	\begin{cases}
		\norm{\frac{\dd \left(\H^1\mres \supp\nu\right)}{\dd \nu}}_{L^{\infty}_\nu} & \mbox{if $\left(\H^1\mres \supp \nu\right) \ll \nu$,}\\
		+\infty &\mbox{otherwise.}
	\end{cases}\label{length_besico1}\\
	&= 
\begin{cases}
	0 & \mbox{if $\supp \nu$ is a singleton,}\\	
	\norm{\left(\frac{\dd \nu}{\dd \left(\H^1\mres \supp\nu\right)}\right)^{-1}}_{L^{\infty}_{\H^1\mres \supp\nu}} & \mbox{otherwise}.
\end{cases}\label{length_besico2}
\end{align}	
\end{proposition}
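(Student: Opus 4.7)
The strategy is to revisit the original definition~\eqref{lengthalpha}, namely $\L(\nu) = \min\{\alpha\ge 0 : \alpha\nu \ge \H^1\mres\supp\nu\}$ understood as an inequality between (positive) Radon measures, and to rewrite it in each case using the appropriate Radon--Nikodym decomposition. Throughout, the assumption $\H^1(\supp\nu)<+\infty$ is used only to ensure that $\H^1\mres\supp\nu$ is a finite Radon measure, so that Besicovitch's differentiation theorem applies.

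For~\eqref{length_besico1}, I would split on whether $\H^1\mres\supp\nu \ll \nu$. If yes, by Radon--Nikodym one has $\H^1\mres\supp\nu = f\nu$ with $f = \frac{\dd(\H^1\mres\supp\nu)}{\dd\nu}$; the inequality $\alpha\nu \ge \H^1\mres\supp\nu$ then reads $f \le \alpha$ $\nu$-a.e., so the smallest such $\alpha$ is exactly $\norm{f}_{L^\infty_\nu}$ (finite or infinite). If instead $\H^1\mres\supp\nu \not\ll \nu$, then no scalar multiple $\alpha\nu$ can dominate $\H^1\mres\supp\nu$: since $\alpha\nu \ll \nu$, it cannot pick up the nontrivial singular part of $\H^1\mres\supp\nu$ with respect to $\nu$, hence $\L(\nu) = +\infty$, matching the right-hand side.

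For~\eqref{length_besico2}, I would first dispose of the singleton case, where both sides are $0$ by convention. Otherwise $\H^1\mres\supp\nu$ is a nonzero finite Radon measure, and Besicovitch's theorem yields the Lebesgue decomposition $\nu = g(\H^1\mres\supp\nu) + \nu^s$ with $g = \frac{\dd\nu}{\dd(\H^1\mres\supp\nu)}$ and $\nu^s \perp \H^1\mres\supp\nu$. For the inequality $\L(\nu) \le \norm{g^{-1}}_{L^\infty_{\H^1\mres\supp\nu}}$, I would note that if $g\ge \beta$ holds $\H^1\mres\supp\nu$-a.e.~for some $\beta>0$, then dropping the nonnegative singular part gives $\nu(A)\ge \beta\,\H^1(A\cap\supp\nu)$ for every Borel $A$, i.e.~$\beta^{-1}\nu\ge \H^1\mres\supp\nu$. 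For the converse, assume $\alpha\nu\ge \H^1\mres\supp\nu$ and fix a Borel carrier $E$ of $\nu^s$ with $\H^1(E\cap\supp\nu)=0$; restricting the inequality to $E^c$ annihilates $\nu^s$ and yields $\alpha g \ge 1$ as densities of Radon measures on $E^c$, hence $g\ge 1/\alpha$ $\H^1\mres\supp\nu$-a.e., so $\norm{g^{-1}}_{L^\infty_{\H^1\mres\supp\nu}}\le \alpha$; minimizing over $\alpha$ closes the loop.

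The only subtlety I anticipate is this bookkeeping around the singular part $\nu^s$ in the second formula: one must restrict the test of $\alpha\nu\ge\H^1\mres\supp\nu$ to the complement of a carrier of $\nu^s$ before reading off a pointwise lower bound on $g$. Once this is handled, both formulas are really just restatements of the definition of $\L(\nu)$ through the two possible Radon--Nikodym decompositions between $\nu$ and $\H^1\mres\supp\nu$.
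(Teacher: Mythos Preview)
Your proposal is correct. The route differs slightly from the paper's: for each inequality in~\eqref{length_besico1} and~\eqref{length_besico2} the paper closes the chain by invoking the previously established characterization $\L(\nu)=\norm{D^+_\nu(\H^1\mres\supp\nu)}_\infty$ from~\eqref{lengthderiv}, together with the Besicovitch identity $\frac{\dd\nu}{\dd(\H^1\mres\supp\nu)}(x)=\bigl(D^+_\nu(\H^1\mres\supp\nu)(x)\bigr)^{-1}$ for $\H^1\mres\supp\nu$-a.e.~$x$. You instead work entirely from the defining condition $\alpha\nu\ge\H^1\mres\supp\nu$ and the two Radon--Nikodym decompositions, handling the singular part $\nu^s$ by restricting to the complement of a carrier. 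Your argument is more self-contained (it does not need~\eqref{lengthderiv}); the paper's is shorter because it recycles an equality already proved.
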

Notice that in Proposition~\ref{prop.length.besico}, both ``norms'' may take the value $+\infty$, and in \eqref{length_besico2}, we adopt the convention that $1/0=+\infty$.

\begin{proof}[Proof of Proposition \ref{prop.length.besico}]
	First, we prove \eqref{length_besico1}. If $\left(\H^1\mres \supp \nu\right) \ll \nu$ then the Lebesgue-Besicovitch differentiation theorem ensures that 
\begin{align*}
	\H^1\mres \supp \nu = \left(\frac{\dd \left(\H^1\mres \supp\nu\right)}{\dd \nu}\right)\nu \leq \norm{\frac{\dd \left(\H^1\mres \supp\nu\right)}{\dd \nu}}_{L^{\infty}_\nu}\nu.
\end{align*}
Therefore,
\begin{align*}
	\mathcal{L}(\nu)\leq \norm{\frac{\dd \left(\H^1\mres \supp\nu\right)}{\dd \nu}}_{L^{\infty}_\nu} \leq \norm{D^+_\nu(\H^1\mres \supp\nu)}_{\infty}=\mathcal{L}(\nu).
\end{align*}
If $\left(\H^1\mres \supp \nu\right)$ is not absolutely continuous w.r.t. $\nu$, there is no $\alpha>0$ such that $\alpha \nu \geq \H^1\mres \supp\nu$, and $\mathcal{L}(\nu)=+\infty$.

Now, we prove \eqref{length_besico2}. The case where $\supp \nu$ is a singleton is already known. 
We assume now that $\H^1(\supp \nu)>0$, and using the Besicovitch differentiation theorem~\cite[Thm. 2.22]{ambrosio2000functions}, we decompose
\begin{align}
	\nu &=  \theta\H^1\mres \supp \nu + \nu^s,\label{eq-besico-decompo}\\
	\intertext{where}\
	\theta(x)\eqdef \frac{\dd \nu}{\dd \left(\H^1\mres \supp\nu\right)}(x)&= \lim_{r \to 0^+} \frac{\nu(B_r(x))}{\H^1(B_r(x)\cap \supp \nu)} =\left(D^+_\nu(\H^1\mres \supp\nu)(x)\right)^{-1}\nonumber
\end{align}
for $\left(\H^1\mres \supp\nu\right)$-a.e. $x$. From the last equality, we get
\begin{align*}
	\norm{\theta^{-1}}_{L^{\infty}_{\H^1\mres \supp\nu}} \leq \norm{D^+_\nu(\H^1\mres \supp\nu)(x)}_\infty =\mathcal{L}(\nu).
\end{align*}
To prove the converse inequality, we assume $\norm{\theta^{-1}}_{L^{\infty}_{\H^1\mres \supp\nu}}<+\infty$ (otherwise there is nothing to prove). Using~\eqref{eq-besico-decompo}, we note that 
\begin{align*}
	\left(\norm{\theta^{-1}}_{L^{\infty}_{\H^1\mres \supp\nu}}\right)\nu	&\ge \H^1\mres \supp\nu,
\end{align*}
so that $\mathcal{L}(\nu)\le \norm{\theta^{-1}}_{L^{\infty}_{\H^1\mres \supp\nu}}$.
\end{proof}

We may now examine a few examples.
\begin{example}\label{example.diracs_dense_sequence}
	Let $\nu=\sum_{n=1}^{\infty} 2^{-n}\delta_{q_n}$, where $(q_{n})_{n\ge 1}$ is a dense sequence in $\ci{0}{1}$.
	The support being the set of points $x$ such that $\nu(B_r(x)) > 0$ for all $r>0$, one has $\supp \nu = [0,1]$ which is connected. However, using \eqref{lengthalpha}, we see that $\mathcal{L}(\nu)=+\infty$. 
\end{example}

\begin{example}[Densities on a $(\H^1,1)$-rectifiable set]\label{example.densities_rectifiable_set}
	Let $\Sigma \subseteq \RR^d$ be a closed connected set with $0<\H^1(\Sigma)<+\infty$, $\theta\colon \Sigma \rightarrow \RR_+$ a Borel function such that $\displaystyle \int_\Sigma \theta\dd \H^1 < 1$, and let $\nu = \theta \H^1\mres \Sigma + \nu^s$ be a probability measure, where $\supp \nu^s \subset \Sigma$ and the measures $\nu^s$ and $\H^1\mres\Sigma$ are mutually singular.
	Then $\mathcal{L}(\nu)= \norm{1/\theta}_{L^\infty_{\H^1\mres \Sigma}}$: the length functional ignores the singular part.
\end{example}

\begin{example}[Parametrized Lipschitz curves]\label{example.parametrized_curve}
	Let $\gamma\colon \ci{0}{1}\rightarrow \RR^d$ be a non-constant Lipschitz curve, and let $\nu$ such that for all $f \in C_b(\RR^d)$,
	\begin{align}
\langle f, \nu\rangle \eqdef  \frac{1}{\len(\gamma)}\left(\int_0^1f(\gamma(t))\abs{\dot\gamma(t)}\dd t\right),\quad
		\mbox{where } \len(\gamma)\eqdef \int_0^1 \abs{\dot\gamma(t)}\dd t\nonumber
	\end{align}
\end{example}
is the length of the curve.
By the area formula \cite[Thm. 3.2.5]{federer2014geometric}, 
\begin{align*}
	\dd\nu(y)= \frac{1}{\len(\gamma)} \card(\gamma^{(-1)}(y))\dd\left( \H^1\mres \Sigma\right)(y)
\end{align*}
where $\Sigma=\gamma(\ci{0}{1})$. As a result, 
\begin{align}
	\mathcal{L}(\nu) = \frac{\len(\gamma)}{\mathrm{ess}\mbox{-}\min_{y \in\Sigma}\left(\card(\gamma^{(-1)}(y))\right)},
\end{align}
where the minimum is an \emph{essential minimum} with respect to $\H^1\mres \Sigma$.

\subsection{Lower semi-continuity of the length functional}
\label{sec.length_functional_is_lsc}
Now, we prove that $\mathcal{L}$ is the lower semi-continuous envelope of $\ell$ for the narrow convergence.
\begin{proposition}
	\label{proposition.length_functional_is_lsc}
	The functional $\mathcal{L}$ is the lower semi-continuous envelope of $\ell$ for the narrow topology. Moreover, for every $\nu$ such that $\mathcal{L}(\nu)<+\infty$, 
	\begin{align}\label{lenghtineq}
		\H^1(\supp \nu)\leq \mathcal{L}(\nu)
	\end{align}
	with equality if and only if either $\nu=\delta_x$ for some $x \in \RR^d$ (if $\H^1(\supp\nu)=0$), or $\H^1(\supp\nu)>0$ and $\displaystyle \nu=\frac{1}{\H^1(\supp \nu)}\H^1\mres \supp \nu$, \textit{i.e.} $\nu = \nu_\Sigma$ for some $\Sigma \in \mathcal{A}$, as defined in~\eqref{admissible_set}.
\end{proposition}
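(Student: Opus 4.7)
The plan is to handle the two assertions separately, starting with the inequality \eqref{lenghtineq} and then identifying $\L$ as the lower semi-continuous envelope $\overline{\ell}$ of $\ell$ by proving both $\L\le \overline{\ell}$ and $\L\ge \overline{\ell}$. For \eqref{lenghtineq}: assume $\L(\nu)<+\infty$. If $\supp\nu$ is a singleton then $\H^1(\supp\nu)=0\le \L(\nu)$, with equality precisely when $\nu=\delta_x$. Otherwise $\supp\nu\in\mathcal{A}$ and by definition $\L(\nu)\nu\ge \H^1\mres\supp\nu$; evaluating on $\mathbb{R}^d$ gives $\L(\nu)\ge \H^1(\supp\nu)$. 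In the equality case the two non-negative measures $\L(\nu)\nu$ and $\H^1\mres\supp\nu$ have equal total mass, hence coincide, forcing $\nu=\frac{1}{\H^1(\supp\nu)}\H^1\mres\supp\nu$; the converse is a direct computation.

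For $\L\le\overline{\ell}$, since $\L\le\ell$ is immediate from the definitions, it suffices to show that $\L$ is narrowly l.s.c. Let $\nu_n\xrightharpoonup{}\nu$ narrowly with $c:=\liminf_n \L(\nu_n)<+\infty$; after extraction assume $\L(\nu_n)\to c$ and set $\Sigma_n:=\supp\nu_n$. Each $\Sigma_n$ is compact connected with $\diam(\Sigma_n)\le \H^1(\Sigma_n)\le \L(\nu_n)\le c+1$; combined with the tightness of $(\nu_n)$ (which forces each $\Sigma_n$ to meet a common compact set) this confines all $\Sigma_n$ to a common compact $\tilde K$. By Blaschke's theorem and weak-$\star$ compactness, we may extract further so that $\Sigma_n\xrightarrow{d_H}\Sigma$ with $\Sigma$ compact connected, and $\mu_n:=\H^1\mres\Sigma_n\xrightharpoonup{\star}\mu$. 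Theorem~\ref{theorem.Golab_localversion} yields $\mu\ge \H^1\mres\Sigma$, and passing to the weak-$\star$ limit in $\L(\nu_n)\nu_n\ge \mu_n$ produces $c\nu\ge \mu\ge \H^1\mres\Sigma\ge \H^1\mres\supp\nu$, using $\supp\nu\subset\Sigma$. To conclude I must verify that $\supp\nu$ is connected: since $c\nu$ is concentrated on $\supp\nu$ and dominates $\H^1\mres\Sigma$, one has $\H^1(\Sigma\setminus\supp\nu)=0$; if $\supp\nu$ split as the union of two disjoint non-empty compacts $A,B$, then $\Sigma$ being compact, connected, of finite $\H^1$ (hence path-connected, see~\cite{ambrosio2004topics}) would contain an arc from $A$ to $B$ whose interior lies in $\Sigma\setminus\supp\nu$ and has $\H^1$-measure $\ge \dist(A,B)>0$, a contradiction. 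Thus $\L(\nu)\le c$.

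For $\L\ge\overline{\ell}$, I would build a recovery sequence $\nu_n=\nu_{\Sigma_n}$ with $\Sigma_n\in\mathcal{A}$, $\nu_n\xrightharpoonup{}\nu$, and $\H^1(\Sigma_n)\to \L(\nu)$. If $\nu=\delta_x$, a segment of length $1/n$ through $x$ does the job. If $\supp\nu\in\mathcal{A}$ with $\alpha:=\L(\nu)$, the auxiliary measure $\sigma:=\alpha\nu-\H^1\mres\supp\nu$ is non-negative (by the defining property of $\alpha$), supported in $\supp\nu$, and has total mass $\alpha-\H^1(\supp\nu)$. Given a finite Borel partition $(A_i^{(n)})_i$ of $\supp\nu$ of diameter $\le 1/n$, I would attach inside the $(1/n)$-neighborhood of each $A_i^{(n)}$ a thin polygonal arc hooked onto $\supp\nu$ of $\H^1$-length exactly $\sigma(A_i^{(n)})$; the resulting $\Sigma_n\in\mathcal{A}$ satisfies $\H^1(\Sigma_n)=\H^1(\supp\nu)+\sigma(\supp\nu)=\alpha$, and the uniform probability $\nu_n$ on $\Sigma_n$ puts mass $\frac{1}{\alpha}\bigl(\H^1(A_i^{(n)})+\sigma(A_i^{(n)})\bigr)=\nu(A_i^{(n)})$ in the $(1/n)$-neighborhood of each $A_i^{(n)}$, from which $\nu_n\xrightharpoonup{}\nu$ follows by testing against Lipschitz functions. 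The main difficulty lies in this final construction: the thin arcs must simultaneously carry the prescribed $\H^1$-length (so as to match $\L(\nu)$ exactly, not merely asymptotically), stay within the prescribed neighborhoods of $A_i^{(n)}$, and keep $\Sigma_n$ connected while avoiding uncontrolled overlaps. A possibly non-trivial singular part of $\nu$ relative to $\H^1\mres \supp\nu$ is absorbed into $\sigma$ and poses no further conceptual difficulty.
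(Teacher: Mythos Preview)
Your proof is correct and follows essentially the same strategy as the paper: the equality case, the l.s.c.\ argument via Blaschke and Go\l ab, and the recovery sequence by attaching extra arcs carrying the excess mass $\sigma=\alpha\nu-\H^1\mres\supp\nu$ are all handled the same way (the paper's Lemma~\ref{lemma.approximation_lemma} uses a dyadic cube partition rather than a general Borel one, and resolves your acknowledged ``overlap'' difficulty by picking finitely many short segments in directions transverse to $\Sigma$). The one genuine variation is in the l.s.c.\ step: you prove $\supp\nu$ is connected via the path-connectedness of $\Sigma$ and the fact that $\H^1(\Sigma\setminus\supp\nu)=0$, whereas the paper argues directly that $\supp\nu=\Sigma$ (any $z\in\Sigma$ has $\nu(B_r(z))\ge \alpha^{-1}\H^1(\Sigma\cap B_r(z))>0$ by connectedness of $\Sigma$); both routes are valid and short.
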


\begin{proof}[ Proof of Proposition \ref{proposition.length_functional_is_lsc}: ]	
	The inequality \eqref{lenghtineq} is clear from the definition of \eqref{lengthalpha}, so we study the equality case.

	If $\nu=\delta_x$ or $\displaystyle \nu=\frac{1}{\H^1(\supp \nu)}\H^1\mres \supp \nu$ with $\H^1(\supp\nu)>0$, one readily checks that $\mathcal{L}(\nu)=\H^1(\supp \nu)$. 
	Conversely, if \eqref{lenghtineq} is an equality, for every Borel set $B$,
\begin{align*}
	0 &= \mathcal{L}(\nu)-\H^1(\supp\nu) \\
	&= \underbrace{\left(\mathcal{L}(\nu)\nu(B)-\H^1(B\cap \supp\nu)\right)}_{\geq 0}+ \underbrace{\left(\mathcal{L}(\nu)\nu(B^\complement)-\H^1(B^\complement\cap \supp\nu)\right)}_{\geq 0}
\end{align*}
so that both terms must be zero. If $\mathcal{L}(\nu)>0$, we deduce
\begin{align*}
	\forall B\subset \RR^d\ \mbox{Borel},\quad	\nu(B) = \frac{\H^1(B\cap \supp\nu)}{\mathcal{L}(\nu)}=  \frac{\H^1(B\cap \supp\nu)}{\H^1(\supp \nu)}.
\end{align*}
If $\mathcal{L}(\nu)=0$, $\H^1(\supp\nu)=0$ and since $\supp \nu$ is connected, $\nu$ is a Dirac mass.

	Next we prove that $\mathcal{L}$ is sequentially lower semi-continuous. We consider  $(\nu_{n})_{n\in \NN}$  such that $\nu_n \xrightharpoonup[n \to \infty]{} \nu \in \mathcal{P}(\RR^d)$ and we show that $\alpha \eqdef \liminf_{n \to \infty} \mathcal{L}(\nu_n) \geq \mathcal{L}(\nu)$. If $\alpha=+\infty$, we have nothing to prove. Otherwise, up to the extraction of a subsequence, we may assume that $\lim_{n \to \infty} \mathcal{L}(\nu_n)=\alpha$ and that $\mathcal{L}(\nu_n)<+\infty$ for all $n \in \mathbb{N}$. 
	
	Defining the sequence of compact and connected sets  $\Sigma_n \eqdef \supp \nu_n$, it holds that $\H^1(\Sigma_n) \le \mathcal{L}(\nu_n)$, so that
	\[
		\sup_{n\geq N} \H^1(\Sigma_n) \leq \alpha +1 <+\infty
	\]
	for $N$ large enough. Hence, for all $n\ge N$, $\diam(\Sigma_n)\leq \alpha +1$.	In addition, let $x\in \supp \nu$. Since $0<\nu(B_1(x))\leq \liminf_{n\to \infty}\nu_n(B_1(x))$, for all $n$ large enough $\left(\supp \nu_n\right)\cap B_1(x)\neq \emptyset$, thus $\supp \nu_n \subset \overline{B_{\alpha+2}(x)}$. 
	
	Therefore, we may apply Blaschke's Theorem and assume,
	up to extracting a subsequence, that $\Sigma_n \xrightarrow[n \to \infty]{d_H} \Sigma$. From the weak convergence of measures one has $\supp \nu \subset \Sigma$. Let us show that $\supp \nu = \Sigma$.  If $\Sigma$ is a singleton $\{x_0\}$, we have $\nu = \delta_{x_0}$. Otherwise, Theorem~\ref{theorem.Golab_localversion} implies that $\Sigma \in \mathcal{A}$ and furthermore, as $\mathcal{L}(\nu_n) \nu_n \ge \H^1\mres \Sigma_n$, that 
 	\begin{equation}
 		\label{nu_greater_H1}
 		\alpha \nu \ge \H^1\mres \Sigma. 
 	\end{equation}
 	Hence, as $\Sigma$ is connected, for all $z \in \Sigma$ it holds $\nu(B_r(z)) > 0$, confirming that $\supp \nu = \Sigma$. Finally from \eqref{nu_greater_H1} we get that
 	\[
 		\liminf_{n \to \infty} \mathcal{L}(\nu_n)
 		=
 		\alpha 
 		\ge 
 		\mathcal{L}(\nu),
 	\]
 	proving that $\mathcal{L}$ is l.s.c.
	
	As a result, we have proved that $\mathcal{L}$ is l.s.c. and that $\mathcal{L}\equiv \ell$ on the effective domain of $\ell$. To show that $\mathcal{L}$  is the l.s.c. enveloppe of $\ell$, we  prove that it is above any l.s.c. functional $\G \le \ell$. Let $\nu \in \P(\RR^d)$. If $\mathcal{L}(\nu)=+\infty$, we have $\G(\nu)\leq \mathcal{L}(\nu)$. If $\mathcal{L}(\nu)<+\infty$, using Lemma~\ref{lemma.approximation_lemma} below, we can find a sequence  $\nu_{\Sigma_n} \xrightharpoonup[n \to \infty]{} \nu$ such that $\H^1(\Sigma_n) \rightarrow \mathcal{L}(\nu)$. The lower semi-continuity of $\G$ yields
	\begin{equation*}
\G(\nu) \le \liminf_{n \to \infty} \G(\nu_{\Sigma_n }) 
		\le \liminf_{n \to \infty} \ell(\nu_{\Sigma_n })
		=  \liminf_{n \to \infty} \H^1(\Sigma_n)
		= \mathcal{L}(\nu).
	\end{equation*}
\end{proof}

The proof of Proposition~\ref{proposition.length_functional_is_lsc} relies on the following approximation Lemma.

\begin{lemma}
	\label{lemma.approximation_lemma}
	Let $\nu \in \mathcal{P}(\RR^d)$ such that $\mathcal{L}(\nu) < \infty$. There exists a sequence $\left(\Sigma_n\right)_{n \in \mathbb{N}} \subset \mathcal{A}$ such that 
	\begin{itemize}
		\item $\Sigma_n \xrightarrow[n \to \infty]{d_H} \supp \nu$, \\
		\item $\nu_{\Sigma_n} \xrightharpoonup[n \to \infty]{} \nu$ and $W_p(\nu_{\Sigma_n}, \nu) \xrightarrow[n \to \infty]{} 0$ for any $p \ge 1$, where $\nu_{\Sigma_n}$ is defined as in~\eqref{admissible_set}. 
	\end{itemize}
	We also have $\H^1(\Sigma_n) \xrightarrow[n \to \infty]{} \mathcal{L}(\nu)$ and if, in addition $\mathcal{L}(\nu) > 0$, we can take $\H^1(\Sigma_n) = \mathcal{L}(\nu)$ for all $n \in \mathbb{N}$.
\end{lemma}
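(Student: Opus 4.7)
The plan is, for $\L(\nu)>0$, to modify $\Sigma:=\supp\nu$ by attaching finitely many Lipschitz ``hairs'' at well-chosen base-points so that the resulting uniform probability measure approximates $\nu$ while the total length stays exactly equal to $L:=\L(\nu)$. The degenerate case $\L(\nu)=0$, where $\nu=\delta_{x_0}$, is handled trivially by taking $\Sigma_n$ to be any segment through $x_0$ of length $1/n$ (the last claim about $\H^1(\Sigma_n)=\L(\nu)$ is vacuous there). The algebraic key is the defining inequality $L\nu\ge \H^1\mres\Sigma$, which says that the deficit $L\nu(E)-\H^1(E)$ is non-negative for every Borel $E\subset \Sigma$, and can therefore be realised as the $\H^1$-length of an added piece.

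For the construction, I fix $n$, choose a finite Borel partition $\{E_i^n\}_{i=1}^{N_n}$ of $\Sigma$ with $\diam(E_i^n)\le 1/n$, and pick a base-point $x_i^n\in E_i^n$. Setting
\[
h_i^n := L\,\nu(E_i^n)-\H^1(E_i^n)\ge 0,
\]
so that $\sum_i h_i^n = L-\H^1(\Sigma)$, I attach at each $x_i^n$ an injective Lipschitz arc $H_i^n$ of length exactly $h_i^n$ contained in $B_{1/n}(x_i^n)$; a zigzag or planar spiral realises any prescribed length inside a ball of arbitrary positive radius. I choose these arcs to emanate from $x_i^n$ in directions transverse to $\Sigma$ at a rectifiability point (available at $\H^1$-a.e.\ point by Theorem~\ref{theorem.rectifiability_tangentiability}), so that the $H_i^n$ are pairwise disjoint and meet $\Sigma$ only at $x_i^n$. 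Then $\Sigma_n:=\Sigma\cup\bigcup_i H_i^n$ is compact and connected (each arc is glued to the connected set $\Sigma$), with
\[
\H^1(\Sigma_n)=\H^1(\Sigma)+\sum_i h_i^n=L.
\]

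Hausdorff convergence $\Sigma_n\xrightarrow{d_H}\Sigma$ is immediate from $\Sigma\subset \Sigma_n\subset \Sigma+\overline{B_{1/n}(0)}$. For the convergence of the measures, the payoff of the construction is that
\[
\nu_{\Sigma_n}(E_i^n\cup H_i^n)=\frac{\H^1(E_i^n)+h_i^n}{L}=\nu(E_i^n),
\]
while both the source and target masses are concentrated in $B_{1/n}(x_i^n)$. Gluing arbitrary local couplings between $\nu_{\Sigma_n}\mres(E_i^n\cup H_i^n)$ and $\nu\mres E_i^n$ produces an admissible transport plan between $\nu_{\Sigma_n}$ and $\nu$ of $p$-cost at most $(2/n)^p$, so $W_p(\nu_{\Sigma_n},\nu)\to 0$ for every $p\ge 1$, and narrow convergence follows a fortiori. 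The main obstacle I anticipate is the geometric hair construction itself: when $\nu$ has an atom near $x_i^n$, the length $h_i^n$ stays of order $L\nu(\{x\})$ as $n\to\infty$, so one must fit a connected arc of non-vanishing length into the shrinking ball $B_{1/n}(x_i^n)$, while keeping the hair disjoint from $\Sigma\setminus\{x_i^n\}$ and from the neighbouring hairs. The spiral-in-a-ball trick resolves the length-versus-radius issue; transversality to the approximate tangent of $\Sigma$ resolves the disjointness issue in dimension $d\ge 2$.
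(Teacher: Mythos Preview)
Your approach is essentially the paper's: partition into small cells, compute the non-negative deficit $L\,\nu(E_i)-\H^1(E_i)$ in each, and graft that much extra length locally so that the uniform measure on the augmented set matches $\nu$ cell by cell. The paper takes the cells to be the dyadic cubes $Q_{z,n}=\tfrac{1}{n}(z+[0,1)^d)$ and, in each cube, adds $N_{i,n}=\lceil m_{i,n}/\delta_i\rceil$ short \emph{straight} segments emanating from a single interior base-point (rather than one spiral). Because the cubes tile $\RR^d$ disjointly and the segments lie in the open cube, the added pieces are automatically disjoint across cells; within a cell, only countably many directions can meet $\Sigma$ in positive $\H^1$-measure, which handles disjointness from $\Sigma$.

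Your argument is weakest exactly at this disjointness step. ``Transversality to the approximate tangent at a rectifiability point'' is a purely local statement at $x_i^n$; it does not prevent a spiral that winds through all of $B_{1/n}(x_i^n)$ from hitting other pieces of $\Sigma$ inside that ball, nor does it prevent collisions with neighbouring spirals, since your balls $B_{1/n}(x_i^n)$ can overlap (you only impose $\diam E_i^n\le 1/n$, not that the $E_i^n$ are separated). Without $\H^1$-a.e.\ disjointness you lose both the identity $\H^1(\Sigma_n)=L$ and the cell-wise mass matching $\nu_{\Sigma_n}(E_i^n\cup H_i^n)=\nu(E_i^n)$ on which your $W_p$ estimate rests. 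The repair is exactly the paper's choice: use genuinely disjoint cells (cubes) and place the added length in their interiors.
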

 
\begin{proof}
	 To simplify the notation, we set $\alpha = \mathcal{L}(\nu)$ and $\Sigma = \supp\nu$. 
	 For $\alpha=0$ (that is, $\nu=\delta_{x_0}$ for some $x_0$), we consider 
	\[
	\Sigma_n = x_0 + \ci{0}{1/n}\times \{0\}^{d-1}
	\]
	which provides the desired approximation, with $\H^1(\Sigma_n) = 1/n \to 0 = \mathcal{L}(\delta_{x_0})$.
	
	For $\alpha>0$,  we start by covering the  space with cubes of the form
	\[
	Q_{z,n} \eqdef \frac{1}{n}\left( z + [0,1)^d\right), \ \text{ for $z \in \mathbb{Z}^d$}.
	\] 
	For some fixed $n$, let $\left(Q_{i,n}\right)_{ i \in I_n}$ be the collection of the cubes such that $\nu\left(Q_{z,n}\right) > 0$, since the set $\Sigma$ is compact, $I_n$ is finite for a given $n$. We define the quantities
	\[
	m_{i,n} \eqdef \alpha \nu(Q_{i,n}) - \H^1(\Sigma \cap Q_{i,n}) \le \alpha,
	\]
	as the excess mass of $\nu$ in the cube $Q_{i,n}$ (note that $m_{i,n}\geq 0$ in view of \eqref{lengthalpha}). Our strategy is to modify $\nu\mres Q_{i,n}$ by adding segments with uniform measure inside the cube and having a total length equal to the excess mass $m_{i,n}$. 

	If $\Sigma \cap \inte Q_{i,n} \neq \emptyset$, take $x_i$ in this intersection, so that $B_{\delta_i}(x_i) \subset Q_{i,n}$ for some $\delta_i > 0$. Then, set 
	$
	\disp
	N_{i,n} \eqdef \left\lceil \frac{m_{i,n}}{\delta_i} \right\rceil,
	$
and choose $\delta_{i,j}\ge 0$ for $j = 1, \dots, N_{i,n}$ such that
	\[
	\sum_{j = 1}^{N_{i,n}} \delta_{i,j} = m_{i,n}, \text{ and } 0 \le \delta_{i,j} < \delta_i.
	\]
	Since $\H^1(\Sigma\cap Q_{i,n})<+\infty$, it is possible to choose $N_{i,n}$ vectors $v_{i,j} \in \mathbb{S}^{d-1}$ such that the segments $S_{i,j} \eqdef [x_i, x_i + \delta_{i,j}v_{i,j}]$ are contained in $\inte Q_{i,n}$ and satisfy  $\H^1(\Sigma \cap S_{i,j}) = 0$ , for $j = 1, \dots, N_{i,n}$.

	If $\Sigma \cap \inte Q_{i,n} = \emptyset$, as the cubes have positive mass, it means that $\nu$ is concentrated on the boundary of the cube, in which case we take $x_i \in \Sigma \cap \partial Q_i$ and any family of segments entering the cube will suffice. 

	Next, we define the measures 
	\[
	    \nu_{\Sigma_n} 
	    \eqdef 
	    \frac{1}{\H^1(\Sigma_n)} \H^1\mres \Sigma_n
	    \text{ for } 
	    \Sigma_n \eqdef \disp \Sigma \cup \bigcup_{i \in I_n}\bigcup_{j = 1}^{N_{i,n}} S_{i,j}.
	\]
	From the construction, the Hausdorff distance between $\Sigma$ and $\Sigma_n$ is at most the diagonal of the cube $[0,1/n)^d$, so that
	\[  
	    d_H(\Sigma, \Sigma_n) \le \frac{\sqrt{d}}{n} \xrightarrow[n \to \infty]{} 0,
	\]
	and the total length of $\Sigma_n$ is given by
	\begin{align*}
		\H^1(\Sigma_n) 
		&= 
		\sum_{i \in I_n} \H^1(\Sigma \cap Q_{i,n})
		+ 
		\sum_{i \in I_n}\sum_{j = 1}^{N_{i,n}}\H^1(S_{i,j})\\ 
		&= 
		\sum_{i \in I_n}
		\H^1(\Sigma \cap Q_{i,n}) + m_{i,n}
		= 
		\alpha
		\sum_{i \in I_n}
		\nu(Q_{i,n}) = \alpha.
	\end{align*}
	Each $\Sigma_n\in \A$ since it is connected and compact (as a finite union of compact sets).

	To finish the proof, it remains  to show that $\nu_{\Sigma_n} \xrightharpoonup[n \to \infty]{} \nu$. By construction, there exists a compact set $K \subset \RR^d$ such that $(\supp \nu)\cup \bigcup_{n\geq 1} \left(\supp \nu_{\Sigma_n}\right) \subset K$. Then any function $\phi \in C_b(\RR^d)$ is uniformly continuous on $K$, and we denote by $\omega$ its modulus of continuity. 
Observing that $\nu_{\Sigma_n}(Q_{i,n}) = \nu(Q_{i,n})$, we note that
	\begin{align*}
		\left| 
		\int_{\RR^d}\phi\dd\nu_{\Sigma_n} 
		- 
		\int_{\RR^d}\phi\dd\nu
		\right|
		\le 
		\sum_{i \in I_n}
		\left| 
		\int_{Q_{i,n}}\phi\dd\nu_{\Sigma_n} - \int_{Q_{i,n}}\phi\dd\nu
		\right|\\ 
		\le
		\sum_{i \in I_n} \omega(\text{diam}Q_{i,n})\nu(Q_{i,n}) 
		\le
		\omega\left(\sqrt{d}/{n}\right) \xrightarrow[n \to \infty]{} 0.
	\end{align*}

Hence $\nu_{\Sigma_n} \xrightharpoonup[n \to \infty]{} \nu$. But as the support of all such measures is contained in the compact $K$ and the Wasserstein distance metrizes the weak convergence in $\mathcal{P}_p(K)$, see \cite[Thm.~5.10]{santambrogio2015optimal}, it holds that $W_p(\nu_{\Sigma_n}, \nu) \xrightarrow[n \to \infty]{} 0$. 
\end{proof}

\begin{remark}
	The conclusions of Proposition~\ref{proposition.length_functional_is_lsc} and Lemma~\ref{lemma.approximation_lemma} still hold when replacing the narrow topology with the local weak-$\star$ topology.
\end{remark}

\subsection{A relaxed problem with existence of solutions}
The relaxed problem~\eqref{problem.shape_optimization_relaxed} introduced on page~\pageref{problem.shape_optimization_relaxed} is defined by replacing $\ell$ in the orginal problem with its l.s.c.~envelope $\mathcal{L}$. We define the energy $\E(\nu) \eqdef W_p^p(\rho_0, \nu) + \Lambda \mathcal{L}(\nu)$, and with a slight abuse of notation, we sometimes write $\mathcal{E}(\Sigma) = \mathcal{E}(\nu_{\Sigma})$ for $\Sigma \in \A$. 
The main point of considering this relaxed problem is that the existence of solutions for~\eqref{problem.shape_optimization_relaxed} follows from the direct method of the calculus of variations. 

\begin{theorem}\label{theorem.existence_relaxed_problem}
	The relaxed problem \eqref{problem.shape_optimization_relaxed} admits a solution. In addition, $\mathcal{E}$ is the l.s.c.~enveloppe of $W_p^p(\rho_0, \cdot) + \Lambda \ell$, and:
	\[
		\inf \eqref{problem.shape_optimization} = 
		\min \eqref{problem.shape_optimization_relaxed}.	
	\]
\end{theorem}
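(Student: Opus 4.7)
The plan is to apply the direct method of the calculus of variations, then derive the identification of $\mathcal{E}$ as the l.s.c.\ enveloppe of the original functional, and finally deduce the equality of the infima from the approximation Lemma~\ref{lemma.approximation_lemma}.

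For existence, consider a minimizing sequence $(\nu_n)_{n \in \mathbb{N}}$ for \eqref{problem.shape_optimization_relaxed}. The constant map $x\mapsto x_0$ (say) gives $\nu_n\in \mathcal{P}_p(\mathbb{R}^d)$, and since $\E(\nu_n)$ is bounded, the $p$-moments $\int |x|^p\dd \nu_n\lesssim W_p^p(\rho_0,\nu_n)+\int |x|^p\dd \rho_0$ are uniformly bounded. This yields tightness, so by Prokhorov's theorem (a subsequence of) $\nu_n$ converges narrowly to some $\nu \in \mathcal{P}(\mathbb{R}^d)$. The l.s.c.\ of $W_p^p(\rho_0,\cdot)$ under narrow convergence together with the l.s.c.\ of $\mathcal{L}$ established in Proposition~\ref{proposition.length_functional_is_lsc} gives $\E(\nu)\le \liminf_{n\to\infty}\E(\nu_n)=\inf\eqref{problem.shape_optimization_relaxed}$, so $\nu$ is a minimizer (and in particular it belongs to $\mathcal{P}_p(\mathbb{R}^d)$ by the moment bound passing to the limit).

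Next, to see that $\mathcal{E}$ is the l.s.c.\ enveloppe of $F\eqdef W_p^p(\rho_0,\cdot)+\Lambda\ell$, note that $\mathcal{E}$ is l.s.c.\ as the sum of two l.s.c.\ functionals, and $\mathcal{E}\le F$ since $\mathcal{L}\le \ell$. Conversely, if $\G\le F$ is l.s.c., fix $\nu$ with $\L(\nu)<+\infty$ (otherwise $\G(\nu)\le+\infty=\mathcal{E}(\nu)$ trivially). By Lemma~\ref{lemma.approximation_lemma}, there exists a sequence $(\Sigma_n)\subset \mathcal{A}$ such that $W_p(\nu_{\Sigma_n},\nu)\to 0$ and $\H^1(\Sigma_n)\to \mathcal{L}(\nu)$. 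Since $\ell(\nu_{\Sigma_n})=\H^1(\Sigma_n)$ and the Wasserstein distance is continuous along such sequences (the supports all lie in a common compact set, cf.\ the end of the proof of Lemma~\ref{lemma.approximation_lemma}), we get $F(\nu_{\Sigma_n})\to \mathcal{E}(\nu)$. The lower semi-continuity of $\G$ then yields $\G(\nu)\le \liminf_{n\to\infty}\G(\nu_{\Sigma_n})\le \liminf_{n\to\infty} F(\nu_{\Sigma_n})=\mathcal{E}(\nu)$, which proves the claim.

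Finally, the identity $\inf\eqref{problem.shape_optimization}=\min\eqref{problem.shape_optimization_relaxed}$ follows: the inequality $\min\eqref{problem.shape_optimization_relaxed}\le \inf\eqref{problem.shape_optimization}$ is immediate from $\mathcal{E}\le F$. For the reverse, apply Lemma~\ref{lemma.approximation_lemma} to a minimizer $\nu^\star$ of \eqref{problem.shape_optimization_relaxed}. If $\L(\nu^\star)>0$ we obtain $(\Sigma_n)\subset \mathcal{A}$ with $\H^1(\Sigma_n)=\L(\nu^\star)$ and $\nu_{\Sigma_n}\to \nu^\star$ in $W_p$, giving $\inf\eqref{problem.shape_optimization}\le F(\nu_{\Sigma_n})\to \mathcal{E}(\nu^\star)$. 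In the degenerate case $\L(\nu^\star)=0$, $\nu^\star$ is a Dirac mass $\delta_{x_0}$, and the explicit approximation $\Sigma_n=x_0+[0,1/n]\times\{0\}^{d-1}\in\mathcal{A}$ provided by Lemma~\ref{lemma.approximation_lemma} again satisfies $F(\nu_{\Sigma_n})=W_p^p(\rho_0,\nu_{\Sigma_n})+\Lambda/n\to W_p^p(\rho_0,\delta_{x_0})=\mathcal{E}(\nu^\star)$. The only subtle point, which is already contained in the approximation lemma, is the uniform compactness of the supports needed to upgrade narrow convergence to Wasserstein convergence; everything else is a routine application of the direct method.
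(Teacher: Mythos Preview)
Your proof is correct and follows essentially the same approach as the paper: direct method via Prokhorov plus the l.s.c.\ of $W_p^p$ and $\mathcal{L}$, then the approximation Lemma~\ref{lemma.approximation_lemma} to identify $\mathcal{E}$ as the l.s.c.\ envelope. The only cosmetic difference is that you verify the no-gap property explicitly by approximating a minimizer, whereas the paper simply invokes the general theory of l.s.c.\ relaxation.
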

\begin{proof}
	Let $\left(\nu_n\right)_{n \in \mathbb{N}}$ be a minimizing sequence for $\mathcal{E}$. Since $\left(\sup_n W_p^p(\rho_0, \nu_n)\right)<+\infty$, the moments of order $p$ of $\nu_n$ are uniformly bounded (see for instance \cite[Thm. 5.11]{santambrogio2015optimal}), and we may then extract a (not relabeled) subsequence converging to some $\nu \in \P(\RR^d)$ in the narrow topology (by Prokhorov's theorem). From Proposition~\ref{proposition.length_functional_is_lsc} and the fact that the Wasserstein distance is lower semi-continuous,  the functional $\mathcal{E}$ is l.s.c.~and we have that 
	\[
		\mathcal{E}(\nu) \le \liminf_{n \to \infty} \mathcal{E}(\nu_n) = \inf \eqref{problem.shape_optimization_relaxed}. 
	\]
	The measure $\nu$ is a minimizer of \eqref{problem.shape_optimization_relaxed}.

	To show that $\E$ is the l.s.c.~enveloppe of the original energy one may argue as in the proof of Proposition~\ref{proposition.length_functional_is_lsc}. Consider any l.s.c.~functional $\G$  such that 
	\begin{align*}
		\forall \nu \in \P(\RR^d),\quad		\G(\nu)\leq W_p^p(\rho_0,\nu) +\Lambda \ell(\nu).
	\end{align*}
	For every $\nu$ with $\mathcal{L}(\nu)<+\infty$, we use Lemma~\ref{lemma.approximation_lemma} to build a sequence $(\nu_n)_{n \in \NN}$ such that $W_p^p(\rho_0,\nu_{\Sigma_n}) \to W_p^p(\rho_0,\nu)$. Indeed, as $\nu_{\Sigma_n}$ converges to $\nu$ for the Wasserstein distance, the triangle inequality gives
	\[
		\left|
		W_p(\rho_0, \nu_{\Sigma_n}) - W_p(\rho_0, \nu)
		\right| 
		\le
		W_p(\nu_{\Sigma_n}, \nu) \xrightarrow[n \to \infty]{} 0. 
	\]
	Hence for any $\nu \in \mathcal{P}_p(\RR^d)$ it holds that
\begin{align*}
	\G(\nu)\leq \liminf_{n \to \infty} \left(W_p^p(\rho_0,\nu_{\Sigma_n}) 
	+
	\Lambda \ell(\nu_{\Sigma_n})\right)= W_p^p(\rho_0,\nu) +\Lambda \mathcal{L}(\nu)=\E(\nu),
\end{align*}
and we conclude that $\mathcal{E}$ is the l.s.c.~envelope.
\end{proof}

\section{On the support of optimal measures}\label{section.geometric_properties_transport_map}
Our goal for this section is to answer to the question of ``how small'' $\Lambda$ must be in Theorem \ref{theorem.the_big_one}. For this, in Theorem~\ref{theorem.when_sols_are_diracs} we study when solutions of the relaxed problem \eqref{problem.shape_optimization_relaxed} are Dirac masses. Keeping this in mind, the rest of this section can be skipped and the reader can move on to the main results of the paper. 

The following notation will be useful: a point $x_0$ is said to be a {\em p-mean} of $\rho_0$ if 
\[
	x_0 \in 
	\argmin_{y \in \RR^d} \int_{\RR^d}|x - y|^p\dd \rho_0(x)
	= 
	\argmin_{y \in \RR^d} W_p(\rho_0, \delta_y). 
\]
A 2-mean is just the mean of $\rho_0$, that is, $m_{\rho_0} \eqdef \disp \int_{\RR^d}x \dd \rho_0(x)$. For $p>1$, the p-mean is uniquely defined, but for $p=1$ the collection of $1$-means is a closed convex set which is not reduced to a singleton in general. 

\begin{theorem}\label{theorem.when_sols_are_diracs}
	For a fixed measure $\rho_0 \in \mathcal{P}_p(\RR^d)$ there exists a critical parameter $\Lambda_\star \in [0, \infty)$ such that
	\begin{itemize}
		\item for $\Lambda < \Lambda_\star$ no solution of $(\mathcal{P}_\Lambda)$ is a Dirac measure;
		\item for $\Lambda > \Lambda_\star$ it holds that $\disp \argmin (\mathcal{P}_\Lambda)$ is the set of $p$-means of $\rho_0$. 
	\end{itemize}
	Moreover, $\Lambda_\star=0$ if and only if $\rho_0$ is a Dirac mass.
\end{theorem}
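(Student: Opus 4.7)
Set $V(\Lambda) \eqdef \inf (\overline{P}_\Lambda)$ and $m_p \eqdef \min_{y\in\mathbb{R}^d} W_p^p(\rho_0,\delta_y)$, the latter being achieved precisely at the $p$-means of $\rho_0$. Since $\mathcal{L}(\delta_y)=0$, a Dirac $\delta_y$ is a minimizer of $(\overline{P}_\Lambda)$ if and only if $V(\Lambda)=m_p$ and $y$ is a $p$-mean. As an infimum of affine non-decreasing maps $\Lambda\mapsto W_p^p(\rho_0,\nu)+\Lambda \mathcal{L}(\nu)$, $V$ is concave and non-decreasing on $[0,+\infty)$, with $V\leq m_p$ (use $\delta_{x^*}$ as a competitor). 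The natural definition
\[
\Lambda_\star \eqdef \inf\bigl\{\Lambda\geq 0:\,V(\Lambda)=m_p\bigr\}
\]
then makes the set $\{V=m_p\}$ either empty or an interval of the form $[\Lambda_\star,+\infty)$ or $(\Lambda_\star,+\infty)$ by monotonicity of $V$, and the statement on $\Lambda<\Lambda_\star$ follows immediately.

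To show $\Lambda_\star<+\infty$, I exploit that any $\nu$ with $\mathcal{L}(\nu)<+\infty$ has connected support with $\diam(\supp\nu)\leq \H^1(\supp\nu)\leq \mathcal{L}(\nu)$, hence $W_p(\nu,\delta_{x_0})\leq \mathcal{L}(\nu)$ for any $x_0\in\supp\nu$. The triangle inequality then gives $W_p(\rho_0,\nu)\geq m_p^{1/p}-\mathcal{L}(\nu)$, and an elementary study of $t\mapsto (m_p^{1/p}-t)_+^p+\Lambda t$ shows that once $\Lambda\geq p\,m_p^{(p-1)/p}$ this map is minimized at $t=0$, so $V(\Lambda)=m_p$ and $\Lambda_\star\leq p\,m_p^{(p-1)/p}$. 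For the characterization of minimizers when $\Lambda>\Lambda_\star$, I argue by contradiction: a minimizer $\nu^*$ with $\mathcal{L}(\nu^*)>0$ would yield, for any $\Lambda'\in(\Lambda_\star,\Lambda)$,
\[
V(\Lambda')\leq W_p^p(\rho_0,\nu^*)+\Lambda' \mathcal{L}(\nu^*) = m_p - (\Lambda-\Lambda')\mathcal{L}(\nu^*)<m_p,
\]
contradicting $V(\Lambda')=m_p$. Hence every minimizer is a Dirac, necessarily at a $p$-mean, and conversely every $p$-mean Dirac realizes $V(\Lambda)=m_p$.

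The delicate point is the equivalence $\Lambda_\star=0\Longleftrightarrow \rho_0$ is a Dirac. The direction $\rho_0=\delta_{x_0}\Rightarrow \Lambda_\star=0$ is immediate from $m_p=0$ and $0\leq V\leq m_p$. For the converse it suffices to exhibit a single $\nu$ with $0<\mathcal{L}(\nu)<+\infty$ and $W_p^p(\rho_0,\nu)<m_p$: then $V(\Lambda)<m_p$ for every $\Lambda<(m_p-W_p^p(\rho_0,\nu))/\mathcal{L}(\nu)$, forcing $\Lambda_\star>0$. I fix a $p$-mean $x^*$ and a direction $u\in \mathbb{S}^{d-1}$ along which the scalar projection $\eta:x\mapsto (x-x^*)\cdot u$ has non-degenerate law under $\rho_0$ (such a $u$ exists because $\rho_0$ is not a Dirac), and set $\Sigma_\delta \eqdef [x^*,\,x^*+\delta u]$. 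The monotone quantile rearrangement $T_\delta$ pushing $\rho_0$ onto $\nu_{\Sigma_\delta}$ along the $u$-direction provides an explicit transport map to be plugged into~\eqref{wasserstein_distance.monge}; a first-order expansion of $|x-T_\delta(x)|^p$ as $\delta\to 0^+$, combined with the first-order optimality condition characterizing $x^*$ (\emph{viz.} $\int |x-x^*|^{p-2}(x-x^*)\,\dd\rho_0=0$ when $p>1$), yields an upper bound of the form $W_p^p(\rho_0,\nu_{\Sigma_\delta})\leq m_p-c\,\delta+o(\delta)$, with $c>0$ capturing a strictly positive covariance between $\eta$ and its rank under $\eta_\sharp\rho_0$.

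The main obstacle is that last estimate. At $p=2$ it reduces cleanly to the rearrangement-type inequality $\mathrm{Cov}(F_{\eta_\sharp\rho_0}(\eta),\eta)>0$; for general $p>1$ the Taylor expansion of $|\cdot|^p$ near the transverse component of $x-T_\delta(x)$ must be controlled uniformly, and for $p=1$ the set of $p$-means can be a genuine convex body, so $x^*$ and $u$ must be selected compatibly, for instance with $x^*$ an extreme point of that set in the direction $u$, to avoid flat directions where the first-order gain vanishes.
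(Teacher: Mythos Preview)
Your value-function framework is a clean alternative to the paper's approach. The paper argues more directly: Lemma~\ref{lemma.trashold_lambda} shows that once a Dirac is optimal for some $\Lambda$, all larger parameters have only Dirac minimizers, which yields the threshold structure without invoking concavity of $V$. Your bound $\Lambda_\star\le p\,m_p^{(p-1)/p}$ via the triangle inequality and the elementary one-variable minimization is correct and rather different from the paper's Proposition~\ref{proposition.critical_lambda_upper_bound}, which expands $|x-y|^p$ around a fixed $y_0\in\Sigma$ and controls $\int|x-y_0|^{p-1}\dd\rho_0$; your route is more self-contained.

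The genuine gap is in the implication $\rho_0\ \text{not a Dirac}\Rightarrow\Lambda_\star>0$. Your segment-and-quantile construction is only carried through for $p=2$, where the first-order gain reduces to $\mathrm{Cov}(F_{\eta_\sharp\rho_0}(\eta),\eta)>0$. For general $p>1$ you defer the uniform control of the expansion, and for $p=1$ you correctly flag that the $p$-mean set may be a full-dimensional convex body, so the direction $u$ cannot be chosen arbitrarily; none of this is actually proved. The paper sidesteps the whole construction with a much softer argument (Lemma~\ref{lemma.cvLambdazero}): for any $\varepsilon>0$ one can approximate $\rho_0$ in $W_p$ by a finitely supported measure, connect the atoms by a $1$-set $\Sigma$, and blend with $\nu_\Sigma$ to obtain $\tilde\rho_0$ with $\mathcal{L}(\tilde\rho_0)<\infty$ and $W_p^p(\rho_0,\tilde\rho_0)\le\varepsilon$. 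This gives $\limsup_{\Lambda\to 0^+}V(\Lambda)\le\varepsilon$ for every $\varepsilon>0$, hence $V(0^+)=0<m_p$ whenever $\rho_0$ is not a Dirac, and $\Lambda_\star>0$ follows at once. In your language this is exactly the existence of some $\nu$ with $\mathcal{L}(\nu)<\infty$ and $W_p^p(\rho_0,\nu)<m_p$, obtained for every $p\ge 1$ without any first-order analysis.
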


We start by studying the support of the optimal measure, showing that it is contained in the convex hull of the support of $\rho_0$. In the sequel the proof of Theorem \ref{theorem.when_sols_are_diracs} will be divided in several steps. We end the section with an exemple of $\rho_0$ composed of $2$ Dirac masses.  

\subsection{Elementary properties of the support}
Given a set $A \subset \RR^d$ we denote by $\cconv A$ its closed convex hull.
\begin{lemma}\label{lem.convexhull}
  Let $\nu \in\P(\RR^d)$ be a solution to \eqref{problem.shape_optimization_relaxed}. Then the following properties hold
  \begin{itemize}
	  \item[(1)] $\H^1(\supp \nu)\leq \frac{1}{\Lambda}W^p_p(\rho_0,\delta_{m_{\rho_0}})$, where $m_{\rho_0}$ is any $p$-mean of $\rho_0$. In particular, $\Sigma$ is contained in a ball of diameter $d_0\eqdef \frac{1}{\Lambda}W^p_p(\rho_0,\delta_{m_{\rho_0}})$.
	  \item[(2)] $\supp \nu \subset \cconv\left(\supp \rho_0\right)
	  \cap B\big({m_{\rho_0}},2W_p(\rho_0,\delta_{m_{\rho_0}})
	  + \tfrac{2}{\Lambda}W_p^p(\rho_0,\delta_{m_{\rho_0}})\big)$
  \end{itemize}
\end{lemma}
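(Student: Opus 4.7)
For part~(1), the strategy is a direct comparison of energies. The measure $\delta_{m_{\rho_0}}$ is admissible for \eqref{problem.shape_optimization_relaxed} with $\L(\delta_{m_{\rho_0}})=0$, so since $\nu$ is optimal,
\[
\Lambda \L(\nu) \le W_p^p(\rho_0,\nu) + \Lambda\L(\nu) = \E(\nu) \le \E(\delta_{m_{\rho_0}}) = W_p^p(\rho_0,\delta_{m_{\rho_0}}).
\]
Combining with the inequality $\H^1(\supp\nu)\le \L(\nu)$ from Proposition~\ref{proposition.length_functional_is_lsc} yields the bound on $\H^1(\supp\nu)$. Since $\supp\nu$ is connected with finite $\H^1$-measure, a standard fact (a compact connected set has diameter at most its $\H^1$-measure) gives $\diam(\supp\nu)\le d_0$, and hence $\supp\nu$ is contained in a closed ball of diameter $d_0$.

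For part~(2), the plan is to use the orthogonal projection $\pi\colon\mathbb{R}^d\to K$ onto the closed convex set $K:=\cconv(\supp\rho_0)$, which is $1$-Lipschitz. The idea is to show that pushing $\nu$ forward by $\pi$ would strictly decrease the energy if any mass of $\nu$ lay outside $K$.

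For the transport term, take an optimal coupling $\gamma\in\Pi(\rho_0,\nu)$ for $W_p^p(\rho_0,\nu)$. Then $(\id,\pi)_\sharp\gamma\in\Pi(\rho_0,\pi_\sharp\nu)$, so
\[
W_p^p(\rho_0,\pi_\sharp\nu)\le \int |x-\pi(y)|^p\, \dd\gamma(x,y)\le \int |x-y|^p\, \dd\gamma(x,y)=W_p^p(\rho_0,\nu),
\]
where the second inequality uses that $\rho_0$-a.e.\ $x$ lies in $K$, together with the basic property $|x-\pi(y)|\le |x-y|$ for $x\in K$. Moreover, if $y\notin K$ the inequality is strict (by the characterization of the projection onto a closed convex set, $\langle y-\pi(y),x-\pi(y)\rangle\le 0$, so $|x-y|^2\ge |x-\pi(y)|^2+|y-\pi(y)|^2$). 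For the length term, Proposition~\ref{prop.length.lipschitz} gives $\L(\pi_\sharp\nu)\le \L(\nu)$ since $\pi$ is $1$-Lipschitz. Hence $\E(\pi_\sharp\nu)\le \E(\nu)$, with \emph{strict} inequality whenever $\nu(\mathbb{R}^d\setminus K)>0$, because then a $\gamma$-positive set of pairs $(x,y)$ has $x\in K$ and $y\notin K$.

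The remaining step is to note that $\pi_\sharp\nu$ is still admissible: it is a probability measure on $\mathbb{R}^d$, and by continuity of $\pi$ its support equals $\pi(\supp\nu)$, which is compact and connected as the image of a compact connected set. Optimality of $\nu$ thus forces $\nu(\mathbb{R}^d\setminus K)=0$, i.e.\ $\supp\nu\subset K$. The only delicate point is keeping track of the strict inequality in the transport term; everything else is book-keeping.
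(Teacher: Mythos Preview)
Your proof is correct and follows essentially the same approach as the paper: comparison with $\delta_{m_{\rho_0}}$ for part~(1), and pushing forward by the $1$-Lipschitz projection onto $\cconv(\supp\rho_0)$ combined with Proposition~\ref{prop.length.lipschitz} and the strict inequality $|x-\pi(y)|<|x-y|$ when $x\in K$, $y\notin K$ for part~(2). The only superfluous remark is your separate check that $\pi_\sharp\nu$ is ``admissible'' with connected support: the relaxed problem ranges over all of $\P_p(\mathbb{R}^d)$, and connectedness is already built into the definition of $\L$, so this step is not needed.
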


\begin{proof}
	For the first point, let $\Sigma$ denote the support of $\nu$. Since $\nu$ has finite energy we have that $\mathcal{L}(\nu) \geq \H^1(\Sigma)$. Thus, since it is also optimal 
	\begin{equation}\label{eq:estimsigma}
		\Lambda \H^1(\Sigma)
		\leq 
		W^p_p(\rho_0,\nu)+\Lambda \mathcal{L}(\nu) \le
		W^p_p(\rho_0,\delta_{m_{\rho_0}})+\Lambda \L(\delta_{m_{\rho_0}}) = W^p_p(\rho_0,\delta_{m_{\rho_0}}).
	\end{equation}

	For the second point, let $C\eqdef \cconv\left(\supp \rho_0\right)$. It is a nonempty closed convex set, therefore the projection  onto $C$ is well-defined and 1-Lipschitz. We denote it by $f$. By Proposition~\ref{prop.length.lipschitz}, it holds that $\L(\nu)\geq \L(f_\sharp \nu)$. Moreover, for every $(x,y)\in C\times \RR^d$,    
	\begin{align*}
		\abs{x-y}^2 &=\abs{x-f(y)}^2 + \abs{f(y)-y}^2 +2\underbrace{\left\langle{x-f(y)},{f(y)-y}\right\rangle}_{\geq 0}\geq \abs{x-f(y)}^2
	\end{align*}
	with equality if and only if $y \in C$. As a result, if $\gamma$ is an optimal transport plan for $(\rho_0,\nu)$,
		\begin{align*} 
			W_p^p(\rho_0,\nu)= \int \abs{x-y}^p\dd \gamma(x,y) &\geq \int \abs{x-f(y)}^p\dd \gamma(x,y) \\
									&=\int \abs{x-y}^p\dd\left((\text{id},f)_\sharp \gamma\right)(x,y) \geq W_p^p(\rho_0,f_\sharp\nu) 
		\end{align*}
    with strict inequality unless $y \in C$ for $\gamma$-a.e. $(x,y)$ (hence $\nu$-a.e. $y$).
    
   But $\nu$ is a solution to  \eqref{problem.shape_optimization_relaxed}, therefore the inequality
   \begin{align*}
	W_p^p(\rho_0,\nu)+ \Lambda \L(\nu)\geq  W_p^p(\rho_0,f_\sharp\nu)+ \Lambda \L(f_\sharp \nu)
   \end{align*}
    cannot be strict. We deduce that $y \in C$ for $\nu$-a.e. $y$, and $C$ being closed, that $\Sigma \subset C$.
    
    Additionally, from~\eqref{eq:estimsigma}, we have
    $W_p(\nu,\delta_{m_{\rho_0}})\le 2 W_p(\rho_0,\delta_{m_{\rho_0}})$ and
    in particular there are points  $y\in \Sigma$ such that 
    $|y-m_{\rho_0}|\le 2 W_p(\rho_0,\delta_{m_{\rho_0}})$. Combined
    with the first point, we obtain that $\Sigma\subset
    B\big({m_{\rho_0}},2W_p(\rho_0,\delta_{m_{\rho_0}})
	  + \tfrac{2}{\Lambda}W_p^p(\rho_0,\delta_{m_{\rho_0}})\big)$.
\end{proof}

\begin{example}
    Let $\rho_0= \delta_{x_0}$ for some $x_0 \in \RR^d$. Then both
	conditions from Lemma~\ref{lem.convexhull} are sharp and 
	characterize for all $\Lambda>0$ the unique solution $\delta_{x_0}$ 
	of~\eqref{problem.shape_optimization_relaxed}.
\end{example}

\subsection{When solutions are Dirac masses} 
Now, we discuss whether or not Dirac masses are solutions in the case where $\rho_0$ is not a Dirac measure. 
We start with the following Lemma. 

\begin{lemma}\label{lemma.trashold_lambda}
	Let $\Lambda > 0$ such that $\delta_{x_0} \in \argmin \left(\P_\Lambda\right)$, for $\Lambda' > \Lambda$ it holds
	\begin{itemize}
		\item for $p > 1$ that $\delta_{x_0}$ is the unique solution of $\left(\P_{\Lambda'}\right)$,
		\item for $p = 1$ that $\argmin \left(\P_{\Lambda'}\right)$ consists of only Dirac masses. 
	\end{itemize}
\end{lemma}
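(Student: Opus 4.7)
The plan is to exploit the affine dependence of the relaxed energy on $\Lambda$ together with the key fact that $\L(\delta_{x_0})=0$. Writing $\mathcal{E}_\Lambda(\nu)\eqdef W_p^p(\rho_0,\nu)+\Lambda\L(\nu)$, one immediately obtains the identity
\[
\mathcal{E}_{\Lambda'}(\nu)-\mathcal{E}_{\Lambda'}(\delta_{x_0}) \;=\; \bigl(\mathcal{E}_{\Lambda}(\nu)-\mathcal{E}_{\Lambda}(\delta_{x_0})\bigr)+(\Lambda'-\Lambda)\,\L(\nu)
\]
for every $\nu\in\P(\mathbb{R}^d)$. Since by hypothesis $\delta_{x_0}$ is a minimizer of $\mathcal{E}_\Lambda$, the first parenthesis is nonnegative, and the second summand is nonnegative because $\Lambda'>\Lambda$ and $\L\ge 0$. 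In particular $\delta_{x_0}$ is already a minimizer of $\mathcal{E}_{\Lambda'}$.

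Now if $\nu$ is any other minimizer of $\mathcal{E}_{\Lambda'}$, both summands on the right must vanish. The vanishing of $(\Lambda'-\Lambda)\L(\nu)$ forces $\L(\nu)=0$, which by Proposition~\ref{proposition.length_functional_is_lsc} is equivalent to $\nu$ being a Dirac mass $\delta_y$. This already settles the case $p=1$.

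For $p>1$, it remains to identify $y$ with $x_0$. The vanishing of the first summand reads $W_p^p(\rho_0,\delta_y)=W_p^p(\rho_0,\delta_{x_0})$, so both $x_0$ and $y$ are $p$-means of $\rho_0$. When $p>1$, the map $z\mapsto \abs{x-z}^p$ is strictly convex on $\mathbb{R}^d$ for every fixed $x$ (strict convexity of $t\mapsto t^p$ on $[0,\infty)$ combined with strict convexity of the Euclidean norm), and integrating against $\rho_0$ preserves strict convexity; hence $z\mapsto W_p^p(\rho_0,\delta_z)$ has a unique minimizer, forcing $y=x_0$.

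There is no substantial obstacle in this argument: everything rests on the linearity of $\Lambda\mapsto\mathcal{E}_\Lambda(\nu)$ and on $\L(\delta_x)=0$. The only genuine difference between the two cases is the uniqueness of the $p$-mean, which follows from a standard strict convexity argument available only for $p>1$; this is precisely why the $p=1$ conclusion is weaker.
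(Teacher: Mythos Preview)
Your proof is correct and follows essentially the same approach as the paper's: both exploit that $\L(\delta_{x_0})=0$ together with the affine dependence of the energy on $\Lambda$ to show any minimizer for $\Lambda'>\Lambda$ must have $\L(\nu)=0$, and then invoke strict convexity of $y\mapsto W_p^p(\rho_0,\delta_y)$ for $p>1$ to obtain uniqueness. Two minor remarks: the Euclidean norm itself is not strictly convex (it is linear along rays), so your parenthetical justification should rather say that $v\mapsto|v|^p$ is strictly convex for $p>1$; and the fact that $\L(\nu)=0$ characterizes Dirac masses is stated right after the definition of $\L$ in Section~\ref{subsec.deflength}, not in Proposition~\ref{proposition.length_functional_is_lsc}.
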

\begin{proof}
	If $\delta_{x_0} \in \argmin \left(\P_\Lambda\right)$, for any $p \ge 1$, and for any measure $\nu$ with $\mathcal{L}(\nu) > 0$ it holds that
	\begin{align*}
		W_p^p(\rho_0, \delta_{x_0})
		\le 
		W_p^p(\rho_0, \nu) + \Lambda \mathcal{L}(\nu) 
		<
		W_p^p(\rho_0, \nu) + \Lambda' \mathcal{L}(\nu),
	\end{align*}
	and hence $\nu$ cannot be a minimizer of $\left(\P_{\Lambda'}\right)$. Then for any $p \ge 1$ it holds that $\argmin \left(\P_{\Lambda'}\right)$ consists of Dirac measures. Whenever $p > 1$, the function $y \mapsto W_p^p(\rho_0, \delta_y)$ is strictly convex and hence $\argmin \left(\P_{\Lambda'}\right)$ is a singleton. 
\end{proof}

This simple Lemma allows for the definition of the critical value $\Lambda_\star$ as follows
\begin{equation}
	\Lambda_\star
	\eqdef
	\inf
	\left\{
		\Lambda \ge 0: 
		\argmin \left(\P_\Lambda\right)
		\subset 
		\left(
			\delta_{x}\right)_{x \in \RR^d}
	\right\}. 
\end{equation}
As stated in Theorem~\ref{theorem.when_sols_are_diracs}, $\Lambda_\star > 0$ whenever $\rho_0$ is not a single Dirac mass, which is a direct consequence of the convergence of solutions to $\rho_0$ when $\Lambda$ goes to $0$.

\begin{lemma}\label{lemma.cvLambdazero}
	For every $\rho_0 \in \mathcal{P}_p(\RR^d)$, and $\Lambda>0$, let $\nu_\Lambda$ be any solution to \eqref{problem.shape_optimization_relaxed}. Then 
\begin{equation*}
    	\nu_\Lambda\xrightharpoonup[\Lambda \to 0^+]{} \rho_0.
\end{equation*}
In particular, $\Lambda_\star>0$ unless $\rho_0$ is a Dirac mass.
\end{lemma}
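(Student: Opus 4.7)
The plan is to prove the stronger statement that $W_p(\rho_0,\nu_\Lambda)\to 0$ as $\Lambda \to 0^+$, which by standard facts (see~\cite[Thm.~5.11]{santambrogio2015optimal}) implies narrow convergence. The key inequality comes from optimality of $\nu_\Lambda$: for any test measure $\mu \in \mathcal{P}(\mathbb{R}^d)$ with $\L(\mu)<+\infty$,
\begin{align*}
	W_p^p(\rho_0,\nu_\Lambda) \le W_p^p(\rho_0,\nu_\Lambda) + \Lambda \L(\nu_\Lambda)\le W_p^p(\rho_0,\mu) + \Lambda\L(\mu).
\end{align*}
If we can show that the set $\mathcal{F}:=\{\mu \in \mathcal{P}(\mathbb{R}^d) : \L(\mu)<+\infty\}$ is $W_p$-dense in $\mathcal{P}_p(\mathbb{R}^d)$, then fixing $\varepsilon>0$ and choosing $\mu_\varepsilon \in \mathcal{F}$ with $W_p^p(\rho_0,\mu_\varepsilon)<\varepsilon$ yields $\limsup_{\Lambda\to 0^+} W_p^p(\rho_0,\nu_\Lambda)\le \varepsilon + \lim_{\Lambda\to 0^+} \Lambda\L(\mu_\varepsilon) = \varepsilon$, and we conclude by letting $\varepsilon \to 0$.

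The main work is therefore in establishing the density of $\mathcal{F}$ in $\mathcal{P}_p(\mathbb{R}^d)$ for $W_p$. I would proceed in three standard approximation steps. First, truncate $\rho_0$ by restricting to a closed ball $\overline{B_R}$ and renormalizing; since $\rho_0$ has finite $p$-moment, this approximation converges to $\rho_0$ in $W_p$ as $R\to+\infty$. Second, discretize the truncated measure on a grid of cells of side length $1/n$ covering $\overline{B_R}$, concentrating the mass of each cell at its center; the resulting finitely supported measure $\mu_n$ satisfies $W_p^p \le (\sqrt{d}/n)^p$ to the truncated measure. Third, connect the atoms of $\mu_n$ by a finite tree $T_n\in \mathcal{A}$ made of line segments (this is possible since the atoms are finitely many in a bounded region), and consider
\begin{align*}
	\mu_n^\delta := (1-\delta)\mu_n + \delta \nu_{T_n},
\end{align*}
where $\nu_{T_n}=\H^1\mres T_n/\H^1(T_n)$. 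Then $\supp \mu_n^\delta=T_n\in\mathcal{A}$ is compact and connected, and $\mu_n^\delta \ge \delta \nu_{T_n}$ gives $\L(\mu_n^\delta)\le \H^1(T_n)/\delta <+\infty$, so $\mu_n^\delta \in \mathcal{F}$. Moreover $W_p^p(\mu_n,\mu_n^\delta) \le \delta \cdot \mathrm{diam}(T_n)^p$, so taking $\delta$ small makes this error negligible.

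Combining the three approximations, for any $\varepsilon>0$ I can find $\mu_\varepsilon \in \mathcal{F}$ with $W_p(\rho_0,\mu_\varepsilon)<\varepsilon$, concluding the density step and hence the proof. The main obstacle is really just the explicit construction of the spanning tree $T_n$ and keeping track of the three small parameters (the radius $R$, the grid scale $1/n$, and the mixing weight $\delta$); none of this is conceptually hard, but one must verify that $\L(\mu_n^\delta)$ stays finite (the renormalization $\delta>0$ is essential, as a pure discrete measure $\mu_n$ has $\L(\mu_n)=+\infty$). Alternatively, one could invoke Lemma~\ref{lemma.approximation_lemma} to further replace $\mu_n^\delta$ by a measure of the form $\nu_\Sigma$, but this additional refinement is not needed for the present statement since the comparison argument only requires some competitor in $\mathcal{F}$.
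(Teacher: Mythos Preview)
Your proposal is correct and follows essentially the same approach as the paper: compare $\nu_\Lambda$ via optimality with a competitor of the form $(1-\delta)\mu + \delta\nu_{T}$, where $\mu$ is a finitely supported $W_p$-approximation of $\rho_0$ and $T$ is a finite tree connecting its atoms, then let $\Lambda\to 0$, $\delta\to 0$, and finally let the discrete approximation converge. The only cosmetic differences are that the paper invokes density of discrete measures in $\mathcal{P}_p$ directly rather than via truncation and a grid, and bounds $W_p^p(\rho_0,\tilde\rho_0)$ using convexity of $W_p^p$ in its second argument rather than an explicit coupling; neither difference is substantive.
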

\begin{proof}
	If $\L(\rho_0)<+\infty$, it suffices to notice that
	\begin{align*}
		W_p^p(\rho_0, \nu_\Lambda)\leq W_p^p(\rho_0, \nu_\Lambda) + \Lambda \mathcal{L}(\nu_\Lambda) \leq 	W_p^p(\rho_0, \rho_0) + \Lambda \mathcal{L}(\rho_0) = \Lambda \mathcal{L}(\rho_0)\xrightarrow[\Lambda\to 0^+]{} 0.
	\end{align*}
	However, we need to handle the case where $\L(\rho_0)=+\infty$.

Let $\varepsilon>0$. By the density of discrete measures in the Wasserstein space, there exists a probability  measure of the form $\mu =\sum_{i=1}^{N} a_i \delta_{x_i}$ such that $W_p^p(\rho_0,\mu)\leq \varepsilon$. We may assume that $N\geq 2$. By connecting all the points $\{x_{i}\}_{1\leq i\leq N}$, we obtain a compact connected set $\Sigma$ with $0<\H^1(\Sigma)<+\infty$. For every $\theta \in \oi{0}{1}$,
 we then define
\begin{align*}
	\tilde{\rho}_0\eqdef \frac{\theta}{\H^1(\Sigma)} \H^1 \mres\Sigma + (1-\theta)\mu 
	= \theta \nu_\Sigma + (1-\theta)\mu.
\end{align*}
and we note that 
$\L(\tilde{\rho}_0)\le \frac{\H^1(\Sigma)}{\theta}<+\infty$. 

By the optimality of $\nu_{\Lambda}$,
\begin{align*}
	W_p^p(\rho_0,\nu_{\Lambda})\le \Lambda \L(\nu_{\Lambda})+ W_p^p(\rho_0,\nu_{\Lambda})\le \Lambda \L(\tilde{\rho}_0)+ W_p^p(\rho_0,\tilde{\rho}_0).
\end{align*}

Taking the upper limit as $\Lambda\to 0^+$, and using the convexity of the Wasserstein distance yields
\begin{align*}
	\limsup_{\Lambda \to 0^+} W_p^p(\rho_0,\nu_\Lambda)
	\le W_p^p(\rho_0,\tilde{\rho}_0) \le 
	\theta     W_p^p\left(\rho_0,\nu_\Sigma\right) + 
	(1-\theta) W_p^p(\rho_0,\mu).
\end{align*}
Letting $\theta\to 0^+$ we obtain $\limsup_{\Lambda \to 0^+} W_p^p(\rho_0,\nu_\Lambda)\leq \varepsilon$. Since $\varepsilon$ is arbitrary, the claim follows.

For the last statement, we note that $\supp \rho_0$ must
be included in the Kuratowski limits of $\supp \nu_\Lambda$ as $\Lambda\to 0$, so that if $\rho_0$ is not a Dirac mass, neither is $\nu_\Lambda$  for $\Lambda>0$ small enough. 
\end{proof}

Next, we show that for $\Lambda$ large enough, the solution becomes a Dirac measure. 

\begin{proposition}\label{proposition.critical_lambda_upper_bound}
For every $\rho_0 \in \mathcal{P}_p(\RR^d)$, $\Lambda_\star<+\infty$.
\end{proposition}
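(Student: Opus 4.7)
The plan is to establish the explicit bound $\Lambda_\star \leq p M^{p-1}$ where $M \eqdef W_p(\rho_0,\delta_{x_0})$ for any $p$-mean $x_0$ of $\rho_0$. If $\rho_0$ itself is a Dirac mass then $M=0$ and the example preceding the statement already gives $\Lambda_\star=0$; thus we may assume $M>0$ and proceed by contradiction. Fixing $\Lambda>pM^{p-1}$, suppose $(P_\Lambda)$ admits a minimizer $\nu$ that is not a Dirac, and write $L\eqdef \L(\nu)$, which is strictly positive because $\nu$ is not a Dirac.

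First I would combine two comparisons. Testing the minimality of $\nu$ against $\delta_{x_0}$ yields $W_p^p(\rho_0,\nu) + \Lambda L \leq M^p$, so in particular $L \leq M^p/\Lambda < M/p \leq M$. Next, for any $x\in \supp\nu$,
\[
W_p(\nu,\delta_x) \leq \diam(\supp \nu) \leq \H^1(\supp\nu)\leq L,
\]
where the first inequality follows from $x\in \supp\nu$, the second from the connectedness of $\supp\nu$ (use the $1$-Lipschitz distance function from $x$, which projects $\supp\nu$ onto an interval of length at least $\diam(\supp\nu)$), and the third from the definition of $\L$. The triangle inequality for $W_p$ together with the $p$-mean property $W_p(\rho_0,\delta_x)\geq M$ then gives
\[
M \leq W_p(\rho_0,\delta_x) \leq W_p(\rho_0,\nu) + L,
\]
so $W_p(\rho_0,\nu) \geq M - L > 0$.

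Plugging this lower bound back into the optimality inequality and raising to the $p$-th power yields $(M-L)^p + \Lambda L \leq M^p$. The mean value theorem applied to $t\mapsto t^p$ on $[M-L,M]$ provides $M^p - (M-L)^p \leq p M^{p-1} L$, hence
\[
\Lambda L \leq p M^{p-1} L.
\]
Dividing by $L>0$ contradicts the choice $\Lambda > pM^{p-1}$, so no such non-Dirac minimizer exists. By Lemma~\ref{lemma.trashold_lambda}, this implies $\Lambda_\star \leq p M^{p-1} < +\infty$.

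There is no genuine obstacle in this argument; the only delicate point is to ensure that $M-L$ is non-negative before writing $(M-L)^p$, which is secured by the a priori bound $L<M/p$ coming from the optimality test against $\delta_{x_0}$. Note that this also produces the quantitative estimate $\Lambda_\star \leq p W_p^{p-1}(\rho_0,\delta_{x_0})$, which sharpens to $\Lambda_\star\leq 1$ in the case $p=1$.
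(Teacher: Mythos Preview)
Your argument is correct and takes a genuinely different, more elementary route than the paper. The paper works at the level of the transport plan: it fixes $y_0\in\Sigma$ closest to the origin, uses the pointwise convexity inequality $|x-y|^p\ge |x-y_0|^p-p|x-y_0|^{p-1}|y-y_0|$, integrates against an optimal plan $\gamma$, and obtains $\E(\nu)\ge \E(\delta_{y_0})+r\bigl(\Lambda-p\int|x-y_0|^{p-1}\dd\rho_0\bigr)$; the remaining work is to bound $|y_0|$ uniformly (via $|y_0|\le 2W_p(\delta_0,\rho_0)$) so that $\int|x-y_0|^{p-1}\dd\rho_0$ is controlled independently of $\nu$. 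By contrast, you stay at the level of Wasserstein distances and exploit only the triangle inequality, the diameter bound $\diam(\supp\nu)\le\H^1(\supp\nu)\le\L(\nu)$ for connected supports, and the scalar mean-value inequality $M^p-(M-L)^p\le pM^{p-1}L$. This yields the clean explicit bound $\Lambda_\star\le p\,W_p^{p-1}(\rho_0,\delta_{x_0})$ without any localization of $\Sigma$ or integration against $\gamma$, and in particular recovers the paper's remark that $\Lambda_\star\le 1$ for $p=1$. One minor note: your final sentence invokes Lemma~\ref{lemma.trashold_lambda}, but in fact you have directly shown that every $\Lambda>pM^{p-1}$ belongs to the set defining $\Lambda_\star$, so the conclusion follows from the definition of $\Lambda_\star$ alone.
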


\begin{proof}
	Choose $\nu \in  \argmin \eqref{problem.shape_optimization_relaxed}$, let $\Sigma\eqdef \supp \nu$ and $y_0\in\Sigma$.

	Let $r\eqdef \min\enscond{r'\geq 0}{\Sigma \subset B(y_0,r')}$: since
	$\Sigma$ is connected one has $r\leq \H^1(\Sigma)<+\infty$.
The convexity of the $p$-norm yields
	\begin{align*}
		\forall x,y \in \RR^d, \quad \abs{x-y}^p \geq \abs{x-y_0}^p -p\abs{x-y_0}^{p-1}\abs{y-y_0}.
	\end{align*}
	As a result, if $\gamma$ is an optimal transport plan for $(\rho_0,\nu)$,
\begin{align*}
	\E(\nu)&= \int_{\RR^d\times \RR^d}\abs{x-y}^p\dd \gamma(x,y)+\Lambda \L(\nu)\\
	       &\geq  \int_{\RR^d\times \RR^d}\abs{x-y_0}^p\dd \gamma(x,y) - p\int_{\RR^d\times \RR^d}\abs{x-y_0}^{p-1}\abs{y-y_0}\dd \gamma(x,y)+\Lambda \H^1(\Sigma)\\
	       &\geq \E(\delta_{y_0})+ r \left(\Lambda - p\int_{\RR^d}\abs{x-y_0}^{p-1}\dd \rho_0(x) \right).
\end{align*}
By optimality of $\nu$, we have $\E(\nu)\leq \E(\delta_{y_0})$, so that $r=0$ and $\nu$ is a Dirac mass a soon as $\left(\Lambda - \displaystyle p\int_{\RR^d}\abs{x-y_0}^{p-1}\dd \rho_0(x)\right)>0$.

On the other hand as soon as $r>0$, this expression
must be negative, and it follows that
\[
    \Lambda_\star \le p\int_{\RR^d}\abs{x-y_0}^{p-1}\dd \rho_0(x). 
\]
Note that this bound depends on $\nu$ (through $\Sigma$) and
therefore also on $\Lambda$.
Yet, as observed in the proof of
Lemma~\ref{lem.convexhull}, point (2), we can choose $y_0\in\Sigma$
with $|y_0-m_{\rho_0}|\le 2W_p(\delta_{m_{\rho_0}},\rho_0)$. It follows that
\[
    \Lambda_\star \le \max_{y_0
    \in B(m_{\rho_0},2W_p(\delta_{m_{\rho_0}},\rho_0))}
    \left(p\int_{\RR^d}\abs{x-y_0}^{p-1}\dd \rho_0(x)\right),
\]
which is a (pessimistic) a priori bound depending only on $\rho_0$.
\end{proof}

\begin{remark}
	In some cases, it is possible to provide sharper bounds on $\Lambda_\star$:
	\begin{itemize}
		\item If $p=1$, we see that $\Lambda_\star\leq 1$.
		\item If $p=2$, it can be shown by a simple translation argument that $\nu$ and $\rho_0$ have the same barycenter. Then, one may adapt the above argument to get $\Lambda_\star \leq 2\int\abs{x-m_{\rho_0}}\dd \rho_0(x)$, where $m_{\rho_0}=\int x\dd \rho_0(x)$.
	\end{itemize}
\end{remark}

\if{
A simple argument shows that for $\Lambda$ large enough, the solution becomes a Dirac measure.

\begin{proposition}\label{proposition.critical_lambda_upper_bound}
	For any $p \ge 1$, there is a constant $L_p$ depending on $p$ and $\rho_0$ such that for $\Lambda > \L_p$ the solutions of $\left(\P_\Lambda\right)$ are of the form $\delta_{x_0}$. In particular $\Lambda_\star \le L_p$.
\end{proposition}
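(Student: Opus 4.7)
The plan is to show that once $\Lambda$ is sufficiently large, any minimizer $\nu$ of $(P_\Lambda)$ must have a support of radius zero, hence be a Dirac mass. First I would fix any optimal $\nu$ with $\Sigma = \supp \nu$, pick a reference point $y_0 \in \Sigma$ (for instance the point of $\Sigma$ closest to the origin, after translating so $\rho_0$ has, say, zero mean), and let $r$ be the radius of the smallest closed ball centered at $y_0$ containing $\Sigma$. Since $\Sigma$ is compact and connected, $r \le \H^1(\Sigma) \le \L(\nu)$.

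The core tool is the elementary convexity estimate
\[
|x-y|^p \;\ge\; |x-y_0|^p - p\,|x-y_0|^{p-1}|y-y_0|, \qquad x,y \in \mathbb{R}^d,\ p\ge 1.
\]
Integrating this against an optimal transport plan $\gamma$ for $(\rho_0,\nu)$ and using $|y-y_0|\le r$ on $\supp \nu$ yields
\[
\E(\nu) \;\ge\; \E(\delta_{y_0}) + r\!\left(\Lambda - p\!\int_{\mathbb{R}^d}|x-y_0|^{p-1}\dd\rho_0(x)\right).
\]
The optimality inequality $\E(\nu)\le \E(\delta_{y_0})$ then forces $r=0$ as soon as the parenthesized quantity is strictly positive, and $\nu$ collapses to a Dirac mass.

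The final—and only real—obstacle is to make this threshold on $\Lambda$ \emph{independent} of the (unknown) minimizer $\nu$. For that, I would use a second comparison: $\E(\nu)\le \E(\delta_0)$ gives $W_p(\rho_0,\nu)\le W_p(\rho_0,\delta_0)$, and since $y_0\in \supp \nu$, the triangle inequality in the Wasserstein space yields $|y_0| \le W_p(\delta_{y_0},\delta_0) \le W_p(\delta_{y_0},\nu)+W_p(\nu,\delta_0) \le 2\,W_p(\rho_0,\delta_0)=:R$. Hence setting
\[
L_p \;:=\; \max_{y_0 \in \overline{B}(0,R)}\, p\!\int_{\mathbb{R}^d}|x-y_0|^{p-1}\dd\rho_0(x),
\]
which is finite because $\rho_0$ has finite $p$-th moment and the integrand depends continuously on $y_0$ over the compact ball $\overline{B}(0,R)$, guarantees that any $\Lambda > L_p$ forces every minimizer of $(P_\Lambda)$ to be a Dirac mass. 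Consequently $\Lambda_\star \le L_p$. The proof is essentially a transcription of the argument carried out in the previous (unboxed) version of this proposition, and the only subtlety is the a priori localization of $y_0$ that is needed to quantify the threshold.
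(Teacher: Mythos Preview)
Your approach is essentially identical to the paper's: the same convexity inequality, the same comparison $\E(\nu)\le\E(\delta_{y_0})$ to force $r=0$, and the same idea of localizing $y_0$ via $\E(\nu)\le\E(\delta_0)$ to get a $\nu$-independent threshold.

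There is one small slip in your localization step. The chain
\[
|y_0| \le W_p(\delta_{y_0},\nu)+W_p(\nu,\delta_0) \le 2\,W_p(\rho_0,\delta_0)
\]
is not justified as written: you have no control on $W_p(\delta_{y_0},\nu)$, and $W_p(\nu,\delta_0)$ alone could already be as large as $2W_p(\rho_0,\delta_0)$. The paper instead exploits directly that $y_0$ is the point of $\supp\nu$ \emph{closest} to the origin, so $|y_0|\le|y|$ for all $y\in\supp\nu$ and hence $|y_0|\le W_p(\delta_0,\nu)$; then the triangle inequality through $\rho_0$ gives $W_p(\delta_0,\nu)\le 2W_p(\rho_0,\delta_0)$. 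Replacing your chain by this one-line observation fixes the argument completely.
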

\begin{proof}
	To prove this result, we use the dual formulation of OT, see \cite{villani2009optimal}. Given two measures $\mu, \nu$ it holds that
	\begin{equation}\label{dual_formulation_OT}
		W_p^p(\mu, \nu)
		= 
		\max
		\left\{
			\int \phi \dd \mu + \int \psi \dd \nu: 
			\begin{array}{c}
				\phi \in L^1(\mu), \psi \in L^1(\nu),\\ 
				\phi(x) + \psi(y) \le |x - y|^p
			\end{array}
		\right\},
	\end{equation}
	where the maximum is always attained in $L^1$. It is also known that the optimal pair $(\phi, \psi)$ can be chosen to have the same modulus of continuity as the cost $c(x,y)$, see \cite[Sec. 1.2]{santambrogio2015optimal}. Since the optimal $\Sigma \subset \co (\supp \rho_0)$ we define
	\[
		L_p \eqdef
		\sup_{y,y'\in \co( \supp \rho_0)}	
		\frac{|y - y'|^p}{|y - y'|}
		=
		\begin{cases}
			p\diam\left(\co( \supp \rho_0)\right)^{p-1},& p>1\\ 
			1& p=1, 
		\end{cases}
	\]
	so that optimal pairs $(\phi, \psi)$ are $L_p$ Lipschitz. 

	Given $\nu$ such that $\mathcal{L}(\nu) < \infty$, with $\nu$ supported over $\Sigma$, we take some $x_0 \in \Sigma$ and let $(\phi, \psi)$ be an optimal pair for the p-Wasserstein distance between $\rho_0$ and $\delta_{x_0}$. 
	
	In particular, $(\phi, \psi)$ is admissible for the dual formulation of $W_p^p(\rho_0, \nu)$ and it holds that 
	$
	\disp	\int \phi \dd \rho_0 + \int \psi \dd \nu \le W_p^p(\rho_0, \nu). 
	$
	And therefore the following estimation holds
	\begin{align*}
		W_p^p(\rho_0, \delta_{x_0}) - W_p^p(\rho_0, \nu)
		&\le 
		\psi(x_0) - \int_\Sigma \psi \dd \nu \le 
		\int_{\Sigma} \left|\psi(x_0) - \psi(x)\right| \dd\nu(x)\\ 
		&\le 
		L_p \cdot \H^1(\Sigma) < \Lambda \mathcal{L}(\nu).
	\end{align*}
	It then holds that any such $\nu$ cannot be optimal and hence any solution must be of the form $\delta_{x_0}$. 
\end{proof}
\begin{remark}
	Notice that $L_p$ can be $+\infty$ for $p>1$ if $\rho_0$ is not compactly supported, in this case the result is non informative. However, this argument still holds for a more general cost of the form $c(x,y) = h(|x-y|)$, with the constant $L_p$ being replaced with the Lipschitz constant of the optimal pairs $(\phi, \psi)$, which depends on the cost and $\rho_0$. 
\end{remark}

In the case $p = 2$ we can give an tighter bound for $\Lambda_\star$. 
\begin{proposition}\label{proposition.critical_lambda_upper_bound_tighter}
	For any $\rho_0 \in \mathcal{P}_2(\RR^d)$ it holds that $\Lambda_\star \le 2\int\abs{x - m_{\rho_0}}\diff \rho_0$. 
\end{proposition}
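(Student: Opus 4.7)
The strategy is to exploit two features specific to the quadratic cost: the barycenter of any optimal $\nu$ must coincide with $m_{\rho_0}$, and a Cauchy--Schwarz expansion of $|x-y|^2$ around $m_{\rho_0}$ produces exactly the linear term $2\int |x-m_{\rho_0}|\dd\rho_0$. Up to a translation of the ambient space, I may assume $m_{\rho_0}=0$, and I will show that if $\Lambda > 2\int|x|\dd\rho_0(x)$ then every solution $\nu$ must be a Dirac mass.

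First, I would establish that the barycenter $m_\nu$ of any optimal $\nu$ vanishes. Given an optimal coupling $\gamma$ for $(\rho_0,\nu)$ and $v \in \mathbb{R}^d$, the translated measure $\nu_v \eqdef (\Id + v)_\sharp \nu$ admits the (non-optimal) coupling $(\pi_0, \pi_1+v)_\sharp \gamma$ with $\rho_0$, yielding
\[
W_2^2(\rho_0,\nu_v) \le W_2^2(\rho_0,\nu) - 2\langle m_\nu, v\rangle + |v|^2.
\]
Choosing $v = -m_\nu$ gives $W_2^2(\rho_0,\nu_v) \le W_2^2(\rho_0,\nu) - |m_\nu|^2$, and since $\L$ is translation-invariant (Proposition~\ref{prop.length.lipschitz} applied to $\Id + v$ and its inverse), we deduce $\E(\nu_v) \le \E(\nu) - |m_\nu|^2$; the optimality of $\nu$ forces $m_\nu = 0$.

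Second, letting $\Sigma \eqdef \supp\nu$ and $R \eqdef \max_{y\in\Sigma}|y|$, I would show that $\H^1(\Sigma) \ge R$. Pick $y^*\in\Sigma$ realising $|y^*|=R$; from $\int \langle y, y^*\rangle \dd \nu(y) = \langle m_\nu, y^*\rangle = 0$ and $\nu(\Sigma)>0$, there must exist $z \in \Sigma$ with $\langle z,y^*\rangle \le 0$, so $|z-y^*|^2 \ge |y^*|^2 = R^2$. The compact connected set $\Sigma$ satisfies $\H^1(\Sigma) \ge \diam(\Sigma) \ge |z-y^*| \ge R$, and therefore $\L(\nu) \ge R$.

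Third, I would combine a pointwise lower bound on the cost with this length estimate. Starting from an optimal plan $\gamma$ and using $|y| \le R$ on $\supp\gamma$, the identity $|x-y|^2 = |x|^2 - 2\langle x,y\rangle + |y|^2$ together with Cauchy--Schwarz yields $|x-y|^2 \ge |x|^2 - 2|x|R$, and integrating gives
\[
W_2^2(\rho_0,\nu) \;\ge\; \int|x|^2\dd\rho_0 \;-\; 2R\int|x|\dd\rho_0 \;=\; \E(\delta_0) - 2R\int|x|\dd\rho_0.
\]
Adding $\Lambda\L(\nu) \ge \Lambda R$ and using $\E(\nu) \le \E(\delta_0)$ (since $\delta_0$ is admissible and $\L(\delta_0)=0$) produces $R\bigl(\Lambda - 2\int |x|\dd\rho_0\bigr) \le 0$. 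Thus $R=0$ under the hypothesis $\Lambda > 2\int|x|\dd\rho_0$, so $\nu = \delta_0$ and $\Lambda_\star \le 2\int|x - m_{\rho_0}|\dd\rho_0$, as claimed.

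The only genuinely delicate point is verifying $\H^1(\Sigma)\ge R$ when $m_{\rho_0}$ may not lie in $\Sigma$; this is precisely where the identity $m_\nu = m_{\rho_0}$ from the first step, combined with the connectedness of $\Sigma$, is used in an essential way.
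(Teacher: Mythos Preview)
Your proof is correct and follows essentially the same route as the paper's: reduce to $m_{\rho_0}=0$, use translation-invariance of $\L$ to force $m_\nu=0$, exploit the vanishing barycenter and connectedness of $\Sigma$ to obtain $\H^1(\Sigma)\ge R$, then expand $|x-y|^2$ with Cauchy--Schwarz to compare $\E(\nu)$ against $\E(\delta_0)$. One minor sign slip: the displayed bound in your first step should read $W_2^2(\rho_0,\nu_v)\le W_2^2(\rho_0,\nu)+2\langle m_\nu,v\rangle+|v|^2$ (the cross term is $-2\langle m_{\rho_0}-m_\nu,v\rangle$), but your choice $v=-m_\nu$ and the resulting conclusion $W_2^2(\rho_0,\nu_v)\le W_2^2(\rho_0,\nu)-|m_\nu|^2$ are correct.
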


We rely on the following two observations.
\begin{lemma}\label{lemma.longueur}
	Let $\nu$ be a solution to \eqref{problem.shape_optimization_relaxed} with $p = 2$. Then the measures $\nu_\star$ and $\rho_0$ have the same center of mass:
			$\disp 
				\int y \diff \nu_\star = m_{\rho_0}.
			$
\end{lemma}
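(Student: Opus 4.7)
The plan is to exploit the fact that for $p=2$, translating one of the measures involved in a Wasserstein distance produces a clean, quadratic decomposition of the cost, while leaving the length functional $\mathcal{L}$ invariant. The key identity is: for any $\mu,\nu \in \mathcal{P}_2(\mathbb{R}^d)$ with means $m_\mu, m_\nu$,
\[
W_2^2(\mu,\nu) = W_2^2\bigl(\mu, \tau_\sharp\nu\bigr) + \lvert m_\mu - m_\nu \rvert^2,
\]
where $\tau(y) \eqdef y + (m_\mu - m_\nu)$ is the translation shifting the mean of $\nu$ onto that of $\mu$.

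First, I would verify this identity. Taking an optimal coupling $\gamma \in \Pi(\mu,\nu)$ for $W_2^2(\mu,\nu)$, expand the integrand as
\[
\lvert x-y\rvert^2 = \lvert x - y - (m_\mu-m_\nu)\rvert^2 + 2\,(m_\mu-m_\nu)\!\cdot\!\bigl(x - y - (m_\mu-m_\nu)\bigr) + \lvert m_\mu-m_\nu\rvert^2,
\]
and integrate: since $\int(x-y)\,d\gamma = m_\mu - m_\nu$, the cross term vanishes and we get $W_2^2(\mu,\nu) = \int \lvert x-\tau(y)\rvert^2 d\gamma + \lvert m_\mu-m_\nu\rvert^2$. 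The push-forward $(\mathrm{id},\tau)_\sharp\gamma \in \Pi(\mu,\tau_\sharp\nu)$ yields the inequality $W_2^2(\mu,\tau_\sharp\nu) \le \int \lvert x-\tau(y)\rvert^2 d\gamma$; the reverse inequality follows by applying the same decomposition in the other direction (using an optimal coupling for $(\mu,\tau_\sharp\nu)$ and the reverse translation).

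Next, I would use that $\tau$ is an isometry. In particular, $\tau$ is $1$-Lipschitz and so is $\tau^{-1}$, so by Proposition~\ref{prop.length.lipschitz} we have $\mathcal{L}(\tau_\sharp\nu_\star) = \mathcal{L}(\nu_\star)$.

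Now apply the decomposition with $\mu=\rho_0$ and $\nu = \nu_\star$. Setting $\nu' \eqdef \tau_\sharp\nu_\star$, we obtain
\[
\mathcal{E}(\nu') = W_2^2(\rho_0,\nu') + \Lambda\,\mathcal{L}(\nu') = W_2^2(\rho_0,\nu_\star) - \lvert m_{\rho_0}-m_{\nu_\star}\rvert^2 + \Lambda\,\mathcal{L}(\nu_\star) = \mathcal{E}(\nu_\star) - \lvert m_{\rho_0}-m_{\nu_\star}\rvert^2.
\]
By optimality of $\nu_\star$ we must have $\mathcal{E}(\nu') \ge \mathcal{E}(\nu_\star)$, which forces $\lvert m_{\rho_0}-m_{\nu_\star}\rvert^2 = 0$, i.e.\ $\int y\,d\nu_\star = m_{\rho_0}$. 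There is no genuine obstacle here; the only subtlety is invoking invariance of $\mathcal{L}$ under translation, which is immediate from Proposition~\ref{prop.length.lipschitz} applied to $\tau$ and $\tau^{-1}$.
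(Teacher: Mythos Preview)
Your proof is correct and follows essentially the same translation argument as the paper: both compare the energy of $\nu_\star$ with that of a translate and exploit the quadratic decomposition of $W_2^2$ together with the translation-invariance of $\mathcal{L}$. The only cosmetic difference is that the paper considers a general translation by $a\in\mathbb{R}^d$, verifies that the pushed-forward plan remains optimal via $c$-cyclical monotonicity, obtains $W_2^2(\rho_0,\nu_{\star,a})=W_2^2(\rho_0,\nu_\star)-2\langle m_{\rho_0}-m_{\nu_\star},a\rangle+|a|^2$, and then specializes to $a=m_{\rho_0}-m_{\nu_\star}$; you instead go directly to that specific translation and establish the identity by a two-sided inequality.
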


\begin{proof}
	For the second point, we set $\nu_{\star, a}\eqdef (\text{id} + a)_{\sharp}\nu_\star$  and compare the energy of $(\alpha,\nu_\star)$ and $(\alpha,\nu_{\star,a})$ for all vectors $a \in \RR^d$. Observe that if $\gamma$ is the optimal transport map from $\rho_0$ to $\nu_\star$, then $\gamma_a \eqdef (\text{id}, \text{id} + a)_{\sharp}\gamma$ is the optimal transport from $\rho_0$ to $\nu_{\Sigma+a}$.
	
	Indeed, we verify that the support of $\gamma_a$ is a c-CM set, which is a sufficient condition for optimality of the transport plan. Given $(x_i, y_i + a)_{i = 1}^n \subset \text{supp}(\gamma_a)$ so that $(x_i, y_i)_{i = 1}^n \subset \text{supp}(\gamma)$, for any permutation of the indexes $\sigma$, it holds that
	\begin{align*}
		\sum_{i = 1}^n |x_i - (y_{\sigma(i)}+a)|^2 
		&= 
		\sum_{i = 1}^n |x_i - y_{\sigma(i)}|^2 
		-2
		\sum_{i = 1}^n \inner{x_i - y_{\sigma(i)},a} + |a^2|\\
		&\ge 
		\sum_{i = 1}^n |x_i - y_{i}|^2
		-2
		\sum_{i = 1}^n \inner{x_i - y_{i},a} + |a^2|.
	\end{align*}
	Where we have used the fact that supp$\gamma$ is itself c-CM and the linearity of the inner product. We conclude that $\gamma_a$ is optimal for its marginals $\rho_0$ and $\nu_{\star, a}$. 
	
	It then holds that
	\begin{align*}
		W^2(\rho_0,\nu_{\star,a})
		&= 
		\int \abs{x-(y+a)}^2\diff \gamma\\ 
		&=
		\int \abs{x-y}^2\diff \gamma 
		-2
		\dotp{\int (x-y)\diff \gamma}{a} + \abs{a}^2\\
		&=
		W^2(\rho_0,\nu_{\Sigma}) 
		-2\dotp{m_{\rho_0} - m_{\nu}}{a} + \abs{a}^2.
	\end{align*}
	Since  $F(\alpha,\nu_{\star+a})\geq F(\alpha,\nu_{\star})$, the sum of the last two terms must be nonnegative for all $a \in \RR^d$, which implies that $m_{\rho_0} = m_{\nu}$ by taking $a = m_{\rho_0} - m_{\nu}$, hence the result is proved.	
\end{proof}

\begin{proof}[Proof of Proposition \ref{proposition.critical_lambda_upper_bound_tighter}]	
Up to a change of origin, it is not restrictive to assume that $m_{\rho_0}\eqdef \int x\diff\rho_0 =0$.

Let $\Sigma$ be an admissible set with $\H^1(\Sigma)>0$, $\nu \in \mathcal{P}_2(\Sigma)$ a candidate measure. Since centering the measure $\nu$ decreases the energy, we assume without loss of generality that $\int_\Sigma y \diff \nu =0$.

Let $\gamma$ be an optimal transport plan between $\rho_0$ and $\nu$. It holds that
\begin{align*}
	\mathcal{E}(\nu) 
	&= 
	\int_{\RR^d \times \Sigma} 
	\abs{x-y}^2\dd \gamma + \Lambda \mathcal{L}(\nu) \\
	&\geq 
	\int_{\RR^d}\abs{x}^2\diff \rho_0 +
	\int_\Sigma |y|^2\dd \nu
	-2\int_{\RR^d \times \Sigma} \dotp{x}{y} \diff \gamma 
	+ \Lambda \H^1(\Sigma)\\
	&\geq \mathcal{E}(\delta_0) 
	-2\int_{\RR^d \times \Sigma} \dotp{x}{y} \diff \gamma
	+\Lambda \H^1(\Sigma).
\end{align*}
Let $r = \min \enscond{r'\geq 0}{\Sigma \subseteq B(0,r')}$. By Lemma~\ref{lemma.longueur}, we know that $r\leq d_0<+\infty$, and since $\H^1(\Sigma)>0$, we have $r>0$.
By minimality of $r$ and compactness of $\Sigma$, there exists some point $y_0 \in \Sigma$ such that $\abs{y_0}=r$. 

We claim that there exists some $\tilde{y} \in  \Sigma$ such that $\dotp{y_0}{\tilde y}\leq 0$. Indeed, since $0 = \int y \diff \nu$,  the point $0$ is in the closed convex hull of $\Sigma$, and if such a point $\tilde{y}$ did not exist, $0$ would be separated from $\Sigma$ by a hyperplane, a contradiction. As a result, $\abs{y_0-\tilde{y}}^2= \abs{y_0}^2 + \abs{\tilde{y}}^2 -2\dotp{y_0}{\tilde{y}}\geq r^2$, so that $\H^1(\Sigma)\geq \mathrm{diam}(\Sigma)\geq r$.

Now, we may resume the estimation
\begin{align*}
	\mathcal{E}(\nu) 
	&\geq 
	\mathcal{E}(\delta_0) - \int \abs{y}\abs{x}\diff \gamma+\Lambda \H^1(\Sigma)\\
			     &\geq \mathcal{E}(\delta_0) + r\left(\Lambda -2\int \abs{x}\diff \rho_0 \right).
\end{align*}
Therefore, as soon as $\Lambda -2\int \abs{x}\diff \rho_0>0$, the second term is strictly positive, and the Dirac mass has a lower energy.
\end{proof}
}\fi

\subsection{The example of an input with two Dirac masses}
	In this subsection we consider the case $p=2$. Let $x_{-1}=(-1,0,\ldots, 0)$, $x_{1}=(1,0,\ldots,0) \in \RR^d$, and let $\rho_0=\frac{1}{2}\left(\delta_{x_{-1}}+\delta_{x_{1}}\right)$. By Lemma~\ref{lem.convexhull}, we know that the solutions to \eqref{problem.shape_optimization_relaxed} are supported on line segments which are contained in $\ci{x_{-1}}{x_1}$. We may thus reduce the problem to the one-dimensional setting, with $x_{-1}=-1$, $x_{1}=1$. The solution to that problem is given by the following proposition.

	\begin{proposition}
		For $p=2$ and $\rho_0=\frac{1}{2}\left(\delta_{-1}+\delta_{1}\right)$, the unique solution to \eqref{problem.shape_optimization_relaxed} is given by
	
		\begin{align}
		\nu_\Lambda =	\begin{cases}
			\sqrt{\frac{3\Lambda}{2}}\H^1\mres\ci{-1}{1} + \left(\frac{1}{2}-\sqrt{\frac{3\Lambda}{2}}\right)(\delta_{-1}+\delta_1) & \mbox{if $0<\Lambda<\frac{1}{6}$},\\
			\frac{1}{3(1-2\Lambda)}\H^1\mres\ci{-\frac{3}{2}(1-2\Lambda)}{\frac{3}{2}(1-2\Lambda)} &\mbox{if $\frac{1}{6}\le \Lambda <\frac{1}{2}$}\\
			\delta_0 & \mbox{if $\Lambda\geq \frac{1}{2}$}.
			\end{cases}. \label{eq:}
		\end{align}
	\end{proposition}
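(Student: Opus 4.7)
My plan is to reduce the problem to an explicit two-parameter optimization. First, I would apply Lemma~\ref{lem.convexhull} to localize the support of any minimizer inside $[-1,1]$, which brings the analysis back to one dimension. Next, since $\rho_0$ is invariant under the reflection $R(x)=-x$, both $\nu$ and $R_\#\nu$ are minimizers whenever one of them is; combining this symmetry with the convexity of $W_2^2(\rho_0,\cdot)$ and the first-order identity obtained by translating a minimizer (which forces the barycenter of $\nu$ to match that of $\rho_0$, namely $0$), I would argue that one may work with a symmetric minimizer whose support is either $\{0\}$ or a centered interval $[-a,a]$ with $a\in(0,1]$.

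The second step is to pin down the structure of such a symmetric minimizer. Setting $c\eqdef 1/\mathcal{L}(\nu)$, Proposition~\ref{prop.length.besico} yields $\nu\ge c\,\H^1\mres[-a,a]$, so I can write $\nu=c\,\H^1\mres[-a,a]+\mu$ for some $\mu\ge 0$ of total mass $1-2ac$. Keeping $a$ and $c$ fixed, the 1D monotone optimal transport between $\rho_0$ and $\nu$ sends the half-mass at $-1$ (resp.\ $+1$) to the left (resp.\ right) half of $\nu$, and on $[-a,a]$ the integrand $(1-|y|)^2$ attains its minimum at $y=\pm a$. I therefore expect to deduce that replacing $\mu$ by $m(\delta_{-a}+\delta_a)$ with $m=(1-2ac)/2$ preserves $\mathcal{L}(\nu)$ and strictly decreases $W_2^2(\rho_0,\nu)$ unless $\mu$ already has this form, thus reducing the problem to optimizing the energy over the two-parameter family
\[
	\nu_{a,c}\eqdef c\,\H^1\mres[-a,a]+\tfrac{1-2ac}{2}\bigl(\delta_{-a}+\delta_a\bigr),
\]
together with the Dirac candidate $\delta_0$.

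A direct computation should then give
\[
	\mathcal{E}(\nu_{a,c})=(1-a)^2+2a^2c\Bigl(1-\tfrac{2a}{3}\Bigr)+\tfrac{\Lambda}{c},
\]
with constraints $a\in[0,1]$, $c>0$ and $2ac\le 1$. The partial derivatives factor as $\partial_a\mathcal{E}=2(1-a)(2ac-1)$ and $\partial_c\mathcal{E}=2a^2(1-2a/3)-\Lambda/c^2$, so every interior critical point lies either on the branch $a=1$ (atoms present) or on the branch $2ac=1$ (no atom). The branch $a=1$ yields $c=\sqrt{3\Lambda/2}$, valid for $c\le 1/2$, i.e.\ $\Lambda\le 1/6$; the branch $2ac=1$ reduces $\mathcal{E}$ to $1-a+a^2/3+2\Lambda a$, minimized at $a=3(1-2\Lambda)/2$ for $\Lambda\in[1/6,1/2]$. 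Comparing with $\mathcal{E}(\delta_0)=1$ shows $\delta_0$ is optimal exactly when $\Lambda\ge 1/2$, and continuity of the parameters and of the energy at $\Lambda=1/6$ and $\Lambda=1/2$ confirms that these three regimes tile the full range. Strict convexity of the reduced functional in each regime then gives uniqueness.

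The hardest step will be the symmetry reduction of the first paragraph, since $\mathcal{L}$ is only quasi-convex: the naive averaging $(\nu+R_\#\nu)/2$ need not lower the length when $\supp\nu$ is not itself symmetric (and may even disconnect the support, blowing $\mathcal{L}$ up to $+\infty$). To bypass this, I plan to first run the structural reduction of the second paragraph for a general (not a priori symmetric) minimizer with connected support $[b_1,b_2]\subset[-1,1]$, obtaining the four-parameter family $c\,\H^1\mres[b_1,b_2]+m_1\delta_{b_1}+m_2\delta_{b_2}$, and then use translation invariance of $\mathcal{L}$ together with the resulting first-order conditions for $W_2^2$ in the endpoint positions and atom masses (notably $\bar\nu=0$ and the reflection invariance of $\mathcal{E}$) to force $b_1=-b_2$ and $m_1=m_2$ at the optimum.
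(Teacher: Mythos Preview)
Your approach is correct and closely parallels the paper's. Both start from Lemma~\ref{lem.convexhull} and the barycenter identity for $p=2$. The paper then invokes an \emph{ansatz}---justified only heuristically, via Lemma~\ref{lemma.minimal_distance2Sigma}---to restrict directly to two one-parameter families: the uniform measures $\frac{1}{2b}\H^1\mres[-b,b]$ and the measures $\frac{1}{\alpha}\H^1\mres[-1,1]+c(\delta_{-1}+\delta_1)$. These are exactly your boundary branches $2ac=1$ and $a=1$, and your factorization $\partial_a\mathcal{E}=2(1-a)(2ac-1)$ is a clean explanation of why only those two matter. Within each ansatz form the barycenter constraint yields symmetry for free, so the paper sidesteps the difficulty you flag.

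Your worry about the symmetry reduction via averaging is legitimate, and your alternative route (excess-to-endpoints first, then first-order conditions) works and in fact rigorizes the paper's ansatz. Concretely: a quantile comparison reduces any minimizer to the form $c\,\H^1\mres[b_1,b_2]+m_1\delta_{b_1}+m_2\delta_{b_2}$ with no symmetry assumption (push the excess on each side of the median to the corresponding endpoint; this weakly decreases $F_\nu^{-1}$ on $[0,\tfrac12]$ and weakly increases it on $[\tfrac12,1]$). Parametrizing by $(b_1,b_2,q,c)$ with $q$ the median, so that $m_1=\tfrac12-c(q-b_1)$ and $m_2=\tfrac12-c(b_2-q)$, one computes $\partial_{b_1}\mathcal{E}=2m_1(1+b_1)$ and $\partial_{b_2}\mathcal{E}=-2m_2(1-b_2)$, forcing at the optimum $(m_1=0$ or $b_1=-1)$ and $(m_2=0$ or $b_2=1)$. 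The two mixed cases are then excluded by the barycenter constraint (each would force a negative atom), leaving precisely the paper's two symmetric branches.
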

	
	\begin{proof}
	We fix $\Lambda>0$ and denote $\nu$ a solution.
	Let $\alpha=\mathcal{L}(\nu)$.
	If $\alpha=0$, $\nu$ is a Dirac mass. If $\alpha>0$, 
	we know that the support of $\nu$ is a connected subset of
	$\cconv{\{-1,1\}}=[-1,1]$, so that $\supp\nu = [a,b]$ for $-1\le a < b\le 1$.
	In addition, letting $c\in [a,b]$ such that
	$\nu([a,c[)\le 1/2$ and $\nu([a,c])\ge 1/2$, one can check that
	if some mass is sent from $\{-1\}$ to $]c,b]$, then exchanging it
	with the same amount of mass sent from $\{+1\}$ to $[a,c[$ we reduce
	the Wasserstein distance. Hence one may assume that the mass
	coming from $\{-1\}$ is sent to a measure $\nu^-$ supported on $[a,c]$
	while the mass from $\{+1\}$ is sent to a measure $\nu^+$ supported
	on $[c,b]$, with $\nu^-+\nu^+=\nu$. Observing that $\nu\ge \frac{1}{\alpha}\H^1\mres [a,b]$
	(we are in the case $\alpha>0$),
	we introduce the non-negative excess measures:
	\[
	\nuexc^- = \nu^- - \frac{1}{\alpha}\H^1\mres [a,c],\quad
		\nuexc^+ = \nu^+ - \frac{1}{\alpha}\H^1\mres [c,b],
	\]
	and $\nuexc = \nuexc^-+\nuexc^+$. Once more, we see that the Wasserstein
	distance is reduced if all the mass sent from $\{-1\}$ to $\nuexc^-$
	is sent to the point $\{a\}$, closest to $\{-1\}$. Hence, we may assume
	that $\nuexc^-= x\delta_a$, for $x\ge 0$, and similarly,
	$\nuexc^+ = y\delta_b$, for $y\ge 0$. Eventually, we easily see that
	if $a>-1$ and $x>0$, then we can extend the segment $[a,b]$ towards $\{-1\}$,
	adding a small piece $[a-\delta,\delta]$ for $\delta\le
	\min \{\alpha x, a+1\}$, send a fraction $\delta/\alpha$ of
	the measure $x\delta_{a}$ rather to $\frac{1}{\alpha}\H^1\mres [a-\delta,a]$,
	and reduce again the Wasserstein distance without changing $\mathcal{L}(\nu)$.
	We deduce that $x=0$ if $a>-1$, similarly $y=0$ if $b<1$.
	
	\newcommand{\noop}[1]{}
	\noop{
		Since the solutions are supported on a line segment in $\ci{-1}{1}$, they are of the form $\nu=\delta_{a}$ or 
		$	\nu = \frac{1}{\alpha}\H^1\mres\ci{a}{b}+\nuexc$,
		with $\alpha=\L(\nu)$ and $\supp \nuexc \subset \ci{a}{b}\subset \ci{-1}{1}$. 

	Since solutions are supported on a line segment in $\ci{-1}{1}$, we use the anzatz and assume them to be of the form
	\[
		\nu = \frac{1}{\alpha}\H^1\mres\ci{a}{b}
		\text{ or }
		\nu = \frac{1}{\alpha}\H^1\mres\ci{-1}{1} + c\delta_{-1} +d\delta_1. 
	\]
	Indeed if the $[a,b]$ does not coincide with $[-1,1]$ and there is any mass left after we form the uniform measure over the segment $[a,b]$, we enlarge a bit the segment. If $a$ or $b$ coincide with $-1,1$, we can just leave any residual mass concentrated at the Dirac delta with no transportation cost, see for instance Lemma~\ref{lemma.minimal_distance2Sigma} below.}
	
	Recalling that for $p=2$, $\nu$ must have the same center of mass as $\rho_0$, we deduce that $\nu$ must be equal to 
		\begin{align*}
			\nu_{0,0} &\eqdef\delta_0,\\
			\mbox{or }\quad	
			\nu_{b,2b}&\eqdef \frac{1}{2b}\H^1\mres\ci{-b}{b}\quad\mbox{for some $b \in \oi{0}{1}$}\\
			\mbox{or}\quad \nu_{1,\alpha} &= \frac{1}{\alpha}\H^1\mres\ci{-1}{1} + \left(\frac{1}{2}-\frac{1}{\alpha}\right)\left(\delta_{-1}+\delta_{1}\right)\quad \mbox{for some $\alpha\geq 2$.}
		\end{align*}
		Let $\E(\nu)= \Lambda \L(\nu)+W_2^2(\rho_0,\nu)$ denote the energy to minimize. We have $\E(\nu_{0,0})=1 = \lim_{b \to 0^+} \E(\nu_{b,2b})$, and
	\begin{align*}
		\E(\nu_{b,2b})&= 2\Lambda b + 2 \int_0^b (1-x)^2\frac{\dd x}{2b} = \frac{b^2}{3}+ (2\Lambda-1)b+1\\
		\mbox{with}\quad \frac{\dd}{\dd b}\E(\nu_{b,2b}) &= \frac{2b}{3}+2\Lambda-1,\\
		\E(\nu_{1,\alpha})&= \Lambda \alpha + 2\int_0^1(1-x)^2\frac{\dd x}{\alpha}+0 = \Lambda \alpha +\frac{2}{3\alpha},\\
		\mbox{with}\quad \frac{\dd}{\dd \alpha}\E(\nu_{1,\alpha}) &= \Lambda - \frac{2}{3\alpha^2}.
	\end{align*}

	For $0<\Lambda<\frac{1}{6}$, we check that $\nu_{1,\alpha^*}$, for $\alpha^*\eqdef \sqrt{\frac{2}{3\Lambda}}$, is the unique solution.
	
	For $\frac{1}{6}\le \Lambda < \frac{1}{2}$, we get that $\nu_{b^*,2b^*}$ is the unique solution, with $b^*\eqdef \frac{3}{2}(1-2\Lambda)$.
	
	For $\Lambda \geq \frac{1}{2}$, the functions $\alpha\mapsto \E(\nu_{1,\alpha})$ and $b\mapsto \E(\nu_{b,2b})$ are strictly increasing on $[2,+\infty[$ and $]0,1]$ respectively. Therefore $\nu_{0,0}$ is the unique solution to \eqref{problem.shape_optimization_relaxed}.
\end{proof}

\section{Solutions are rectifiable measures}\label{section.solutions_are_absolutely_continuous}
Our goal here is to show that whenever $\rho_0 \ll \H^1$, any solution $\nu$ is a rectifiable measure
of the form 
\[
    \nu = \theta \H^1\mres \Sigma, \ \text{ for $\theta \in L^1(\Sigma; \H^1)$}.
\]
To this end, we introduce the excess measure $\nuexc$ as the positive measure given by the mass of $\nu$ that exceeds the density constraints. We first show that this measure solves a family of localized problems. This is used to prove the absolute continuity w.r.t.~$\H^1\mres\Sigma$, that is, point~(1) of Theorem~\ref{theorem.the_big_one}.

\subsection{The excess measure}
Let $\nu$ be a minimizer of \eqref{problem.shape_optimization_relaxed} with support
$\Sigma$ not reduced to a singleton. From the definition of the length functional we have:
\[
	\mathcal{L}(\nu) < \infty \text{ if and only if there is $\alpha \ge 0$ such that } \alpha\nu \ge \H^1\mres \Sigma.  	
\]
Setting $\alpha \eqdef \mathcal{L}(\nu)>0$, we define the following decomposition
\begin{equation}\label{nuexceed}
	\nu = \nuH + \nuexc, 
	\text{ where $\nuH \eqdef \alpha^{-1}\H^1\mres \Sigma$ and $\nuexc \eqdef \nu - \nuH$}.
\end{equation}
The part $\nuH$ is the measure which saturates the density constraint, and the support of the \textit{excess measure} $\nuexc$ is where the constraint is inactive.  

In the sequel, we fix an optimal transport plan $\gamma$, for the problem defining $W_p^p(\rho_0, \nu)$, and we define an analogous (non-unique) decomposition of $\gamma$ and $\rho_0$ by disintegrating $\gamma$ w.r.t.~the second marginal. From  the disintegration theorem \cite[Theorem 2.28]{ambrosio2000functions}, there exists a $\nu$-measurable family $\{\gamma_{y}\}_{y\in \RR^d} \subset \P(\RR^d)$, such that $\gamma= \gamma_y\otimes\nu$, that is
\begin{equation}\label{disintegration_formula}
  \int_{\RR^d \times \Sigma}
  \psi(x, y)\dd \gamma(x,y)
  =
  \int_\Sigma 
  \left(
    \int_{\RR^d} \psi(x,y)\dd \gamma_y(x)
  \right)\dd\nu(y), \text{ for all $\psi \in L^1(\gamma)$}
  .
\end{equation}

We define a decomposition $\gamma = \gammaH + \gammaexc$ as
\begin{equation}\label{gammaexceed}
	\gammaH(A\times B)\eqdef 
	\int_{\Sigma \cap B} 
	\gamma_y(A)
	\dd\nuH(y), 
	\quad
	\gammaexc(A\times B)\eqdef 
	\int_{\Sigma \cap B} 
	\gamma_y(A)
	\dd\nuexc(y).
\end{equation}
The decomposition $\rho_0 = \rhoH + \rhoexc$ can be defined as the marginals of $\gammaH$ and $\gammaexc$
\begin{equation}\label{rhoexceed}
\rhoH \eqdef {(\pi_{0})}_\sharp \gammaH, 
	\quad
\rhoexc \eqdef {(\pi_{0})}_\sharp \gammaexc.
\end{equation}

This way $\gammaH \in \Pi(\rhoH, \nuH)$, $\gammaexc \in \Pi(\rhoexc, \nuexc)$ and they are optimal transport plans between their respective marginals. Indeed if we find a better transport plan for either problem we can construct a better plan for the original problem, contradicting the minimality of $\gamma$. We therefore also have a decomposition between the Wasserstein distances
\begin{equation}\label{rhoexc_decomposition_wasserstein}
    W_p^p\left(\rho_0, \nu\right) 
    =
    W_p^p\left(\rhoH, \nuH\right)
    +
    W_p^p\left(\rhoexc, \nuexc\right).
\end{equation}

Let us point out that, although the decomposition of $\nu$ is natural, there are many ways to decompose $\gamma$ and $\rho_0$, for instance by choosing another disintegration family. In the sequel we show that for any such decomposition the excess must be concentrated on the graph of the operator given by the (multivalued) projection onto $\Sigma$
\begin{equation}\label{projection_operator}
	\Pi_\Sigma(x) \eqdef \argmin_{y \in \Sigma} |x - y|^2.
\end{equation}

Note that $\Pi_\Sigma$ is a multivalued operator which is included in the subgradient of the convex conjugate of the function:
$y\mapsto |y|^2/2$ if $y\in \Sigma$ and $+\infty$ else. 
\begin{lemma}\label{lemma.minimal_distance2Sigma}
	Let $\nu$ be a minimizer of \eqref{problem.shape_optimization_relaxed} and $\gamma$ an optimal transport plan from $\rho_0$ to $\nu$. Then, for any decomposition
    $
        \gamma = 
        \gammaH+ \gammaexc, 
        \text{ s.t. }
        {(\pi_{1})}_\sharp \gammaexc = \nuexc,    
    $
    it holds that
	\begin{align}
		\supp \gammaexc \subset \mathrm{graph}(\Pi_\Sigma).\label{eqgammaexcproj}
	\end{align}
    In addition, for any $\pi_\Sigma$ measurable selection of $x\mapsto \Pi_\Sigma(x)$, the measure
	\[
		\nuH + {(\pi_{\Sigma})}_\sharp\rhoexc 
  	\]
	is optimal for \eqref{problem.shape_optimization_relaxed}.
\end{lemma}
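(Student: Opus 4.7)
The plan is to exploit the key structural fact that the excess measure $\nuexc$ can be freely redistributed on $\Sigma$ without affecting the length functional, because $\nuH$ alone already saturates the constraint $\L(\nu)\,\nu \geq \H^1\mres\Sigma$. The strategy is then a rearrangement argument: construct a competitor in which the mass of $\rhoexc$ is transported to a nearest point on $\Sigma$ via the selection $\pi_\Sigma$, and read off the conclusion from the optimality of $\nu$.

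First, I would fix a Borel measurable selection $\pi_\Sigma$ of the multifunction $x \mapsto \Pi_\Sigma(x)$; such a selection exists by a standard selection theorem since $\Pi_\Sigma(x)$ is nonempty and compact and the multifunction is Borel measurable. Then I would define the candidate
\[
\tilde\nu \eqdef \nuH + {\pi_{\Sigma}}_\sharp \rhoexc, \qquad \tilde\gamma \eqdef \gammaH + (\mathrm{id},\pi_\Sigma)_\sharp \rhoexc,
\]
so that $\tilde\gamma$ is a transport plan from $\rho_0$ to $\tilde\nu$. Two estimates are then the heart of the argument: (i) $\L(\tilde\nu) \leq \L(\nu)$, since $\tilde\nu \geq \nuH$ and $\supp \tilde\nu = \Sigma$, which yields $\L(\nu)\,\tilde\nu \geq \H^1\mres\Sigma$; and (ii) $W_p^p(\rho_0,\tilde\nu) \leq W_p^p(\rho_0,\nu)$, obtained by evaluating the cost on $\tilde\gamma$ and using the pointwise inequality $|x - \pi_\Sigma(x)| = \dist(x,\Sigma) \leq |x-y|$ valid for every $y \in \Sigma$ to bound the excess part $\int |x - \pi_\Sigma(x)|^p \dd\rhoexc$ by $\int |x-y|^p \dd\gammaexc$.

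Adding the two inequalities yields $\E(\tilde\nu) \leq \E(\nu)$. By minimality of $\nu$, both (i) and (ii) must be equalities; in particular $\tilde\nu$ is also optimal, which proves the second claim. From the equality in (ii) I would read off the identity $\int |x-\pi_\Sigma(x)|^p \dd\rhoexc = \int |x-y|^p \dd\gammaexc$, which combined with the pointwise inequality $|x-\pi_\Sigma(x)| \leq |x-y|$ on $\supp\gammaexc$ forces $|x-y| = \dist(x,\Sigma)$, i.e.\ $y \in \Pi_\Sigma(x)$, for $\gammaexc$-a.e.\ $(x,y)$. Since $\Sigma$ is compact and $\dist(\cdot,\Sigma)$ is continuous, $\mathrm{graph}(\Pi_\Sigma)$ is closed, so the $\gammaexc$-a.e.\ inclusion upgrades to the claimed $\supp\gammaexc \subset \mathrm{graph}(\Pi_\Sigma)$. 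The main conceptual point is really the observation that $\nuH$ absorbs any rearrangement of $\nuexc$ without changing $\L$; once that is in hand, the rest is a direct comparison of energies that works uniformly across all admissible decompositions of $\gamma$, since the construction depends only on the marginal $\rhoexc$ and the chosen selection.
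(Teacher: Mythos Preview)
Your proof is correct and follows essentially the same approach as the paper's: build the competitor $\tilde\gamma = \gammaH + (\mathrm{id},\pi_\Sigma)_\sharp\rhoexc$, use $\L(\tilde\nu)\le\L(\nu)$ and the pointwise inequality $|x-\pi_\Sigma(x)|\le |x-y|$ to get $\E(\tilde\nu)\le\E(\nu)$, then read off equality from optimality and conclude via the closedness of $\mathrm{graph}(\Pi_\Sigma)$. The only cosmetic difference is that the paper first recasts \eqref{problem.shape_optimization_relaxed} as a minimization over plans $\gamma$, which lets it argue directly at the level of transport costs without splitting the energy into two separate inequalities; your route through $\E(\tilde\nu)\le\E(\nu)$ and then back to the equality of the excess integrals is equivalent.
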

\begin{proof}
	Consider the problem 
    \begin{align}
        \label{problem.coupling_optimization_relaxed}
        \tag{$\overline{Q}_\Lambda$}
        \inf_{
            \substack{
                \gamma \in \mathcal{P}_p(\RR^d\times \RR^d)\\
                {(\pi_{0})}_\sharp\gamma = \rho_0,
            }
          } 
          \int_{{\RR^d\times \RR^d}}\abs{x-y}^p\dd \gamma(x,y) + \Lambda \mathcal{L}({(\pi_{1})}_\sharp\gamma), 
    \end{align}
    which is a reformulation of~\eqref{problem.shape_optimization_relaxed} in
    terms of the transport plan $\gamma$ from $\rho_0$ to $\nu$.
    
    Now, let $(\gammaH,\gammaexc)$ be any suitable decomposition of $\gamma$ and let $\pi_\Sigma$ be a measurable selection of $\Pi_\Sigma$. We set $\rhoexc\eqdef {(\pi_{0})}_\sharp\gammaexc$ and define $\tilde{\gamma} = \gammaH + (\text{id},\pi_\Sigma)_{\sharp}\rhoexc$. Then, since still ${\pi_1}_\sharp \tilde{\gamma}\ge \nu_{\H^1}$,
    it holds that $\mathcal{L}({\pi_{1}}_\sharp\tilde{\gamma})\leq \mathcal{L}(\nu)$ and 
    \begin{multline*}
      \int_{{\RR^d\times \RR^d}}
      \abs{x-y}^p\dd \tilde{\gamma}
      = \int_{{\RR^d\times \RR^d}}
      \abs{x-y}^p\dd \gammaH
      + 
      \int_{\RR^d}
      \abs{x-\pi_{\Sigma}(x)}^p\dd\rhoexc \\
      \leq \int_{{\RR^d\times \RR^d}}\abs{x-y}^p\dd \gammaH+ \int_{\RR^d\times \Sigma}\abs{x-y}^p\dd\gammaexc = \int_{{\RR^d\times \RR^d}}\abs{x-y}^p\dd {\gamma}
    \end{multline*}
    Since $\gamma$ is a minimizer of~\eqref{problem.coupling_optimization_relaxed}, we must have an equality, in particular it holds that
    \begin{align*}
      \int_{\RR^d\times \RR^d}\left( \abs{x-y}^p-\abs{x-\pi_{\Sigma}(x)}^p\right)\dd\gammaexc = 0.
    \end{align*}
    Since $\gamma$-a.e.~$(x,y)$ is in $\RR^d\times \Sigma$, the integrand is nonnegative and must vanish $\gammaexc$-a.e. Hence $(x,y) \in \mathrm{Graph}(\Pi_\Sigma)$ for $\gammaexc$-a.e. $(x,y)$ and \eqref{eqgammaexcproj} follows since $\mathrm{Graph}(\Pi_\Sigma)$ is closed.
    As a consequence, the measure $\nuH + {\pi_{\Sigma}}_\sharp\rhoexc$ reaches the minimimum for \eqref{problem.shape_optimization_relaxed} and is optimal.
\end{proof}

\subsection{Solutions are absolutely continuous}
Now we prove 
that the solutions to the relaxed problem \eqref{problem.shape_optimization_relaxed} are absolutely continuous w.r.t.~$\H^1\mres\Sigma$. The proof is based on the construction of a localized variational problem. 

\begin{lemma}\label{lemma.auxiliary_problem}
    Let $\nu$ be an optimal solution for the relaxed problem~\eqref{problem.shape_optimization_relaxed} and set $\alpha = \mathcal{L}(\nu)$. Let $\mathcal{S} = \mathcal{S}_0\times \mathcal{S}_1 \subset \RR^d\times \RR^d$ be a Borel set and define the transport plan
    \[
        \gamma_{\mathcal{S}} \eqdef \gammaexc \mres \mathcal{S}_0\times \mathcal{S}_1
    \]
    along with its marginals
    \[
        \rho_{\mathcal{S}} \eqdef {\pi_{0}}_\sharp\gamma_\mathcal{S} 
        \le \rhoexc \mres \mathcal{S}_0, 
        \quad \nu_{\mathcal{S}} \eqdef {\pi_{1}}_\sharp\gamma_{\mathcal{S}}.
    \]
    
    Then the measure $\nu_{\mathcal{S}}$ solves the following variational problem
    \begin{equation}
        \inf
        \left\{
            W_p^p\left(
                \rho_{\mathcal{S}}, \nu'
            \right) : 
            \begin{array}{c}
                \text{there is $\Gamma$ such that}\\ 
                \nu' \in \mathcal{M}_+(\Sigma \cup \Gamma),\\ 
                \nu' \ge \alpha^{-1}\mathcal{H}^1\mres \left(\Gamma\setminus \Sigma\right), \\ 
                \Sigma \cup \Gamma \in \mathcal{A}, \
                \nu'(\RR^d) = \nu_{\mathcal{S}}(\RR^d)
            \end{array} 
        \right\}
    \end{equation}
    
    More generally, let $\left(\sigma_{\mathcal{S},t}\right)_{t \in [0,1]}$ be the constant speed geodesic between $\rho_{\mathcal{S}}$ and $\nu_{\mathcal{S}}$ defined through $\sigma_{\mathcal{S},t} \eqdef {\pi_{(1-t)}}_\sharp\gamma_{\mathcal{S}}$, where  $\pi_t(x,y) \eqdef (1-t)x + ty$. Then for any $t \in [0,1]$, the measure $\nu_{\mathcal{S}}$ minimizes the variational problem
    \begin{equation}\label{auxiliary_problem_sigma}
        \inf 
        \left\{
            W_p^p\left(
                \sigma_{\mathcal{S}, t}, \nu'
            \right) : 
            \begin{array}{c}
                \text{there is $\Gamma$ such that }\\ 
                \nu' \in \mathcal{M}_+(\Sigma \cup \Gamma),\\ 
                \nu' \ge \alpha^{-1}\mathcal{H}^1\mres \left(\Gamma\setminus \Sigma\right), \\ 
                \Sigma \cup \Gamma \in \mathcal{A}, \
                \nu'(\RR^d) = \nu_{\mathcal{S}}(\RR^d)
            \end{array} 
        \right\}. 
    \end{equation}
\end{lemma}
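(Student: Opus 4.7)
The plan is to argue by contradiction: if some competitor $\nu'$ in the auxiliary problem with reference $\rho_{\mathcal{S}}$ (the case $t=1$) strictly improves on $\nu_{\mathcal{S}}$, I glue $\nu'$ back into $\nu$ to produce a global competitor $\tilde{\nu} := (\nu - \nu_{\mathcal{S}}) + \nu'$ for \eqref{problem.shape_optimization_relaxed} whose energy is strictly smaller than $\E(\nu)$, contradicting optimality. The general geodesic case will be reduced to the case $t=1$ by the triangle inequality.

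To verify the admissibility of $\tilde{\nu}$ for the relaxed problem, nonnegativity follows from $\nu_{\mathcal{S}} = {\pi_1}_\sharp\gamma_{\mathcal{S}} \le \nuexc \le \nu$, and the mass constraint $\nu'(\RR^d)=\nu_{\mathcal{S}}(\RR^d)$ makes $\tilde{\nu}$ a probability measure. For the length bound, the key observation is that subtracting $\nu_{\mathcal{S}}\le\nuexc$ from $\nu$ leaves the saturating part $\nuH = \alpha^{-1}\H^1\mres\Sigma$ untouched, so $\alpha(\nu-\nu_{\mathcal{S}})\ge\H^1\mres\Sigma$; combining with $\alpha\nu'\ge\H^1\mres\Gamma$ (from the auxiliary constraint) gives $\alpha\tilde{\nu}\ge\H^1\mres(\Sigma\cup\Gamma)$. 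Since $\Sigma\cup\Gamma\in\A$ is connected with positive $\H^1$-measure, a standard local length lower bound for connected sets forces $\supp\tilde{\nu}=\Sigma\cup\Gamma$, and hence $\L(\tilde{\nu})\le\alpha=\L(\nu)$. This step is the main technical subtlety, and the density constraint $\nu'\ge\alpha^{-1}\H^1\mres\Gamma$ in the auxiliary problem is precisely designed to make it work.

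For the energy comparison I use the classical fact that restrictions of optimal plans remain optimal for their marginals (since $c$-cyclical monotonicity is preserved by restriction), so $W_p^p(\rho_{\mathcal{S}},\nu_{\mathcal{S}}) = \int|x-y|^p\dd\gamma_{\mathcal{S}}$. Gluing $\gamma-\gamma_{\mathcal{S}}$ with an optimal plan $\gamma'$ between $\rho_{\mathcal{S}}$ and $\nu'$ produces a transport plan from $\rho_0$ to $\tilde{\nu}$ whose total cost yields
\[
  W_p^p(\rho_0,\tilde{\nu}) - W_p^p(\rho_0,\nu) \le W_p^p(\rho_{\mathcal{S}},\nu') - W_p^p(\rho_{\mathcal{S}},\nu_{\mathcal{S}}).
\]
Combined with $\L(\tilde{\nu})\le\L(\nu)$, a strict improvement by $\nu'$ would give $\E(\tilde{\nu})<\E(\nu)$, the desired contradiction. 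For general $t\in[0,1]$, the geodesic identities $W_p(\rho_{\mathcal{S}},\sigma_{\mathcal{S},t})=(1-t)\,W_p(\rho_{\mathcal{S}},\nu_{\mathcal{S}})$ and $W_p(\sigma_{\mathcal{S},t},\nu_{\mathcal{S}})=t\,W_p(\rho_{\mathcal{S}},\nu_{\mathcal{S}})$ combined with the triangle inequality show that any strict improvement $W_p(\sigma_{\mathcal{S},t},\nu')<W_p(\sigma_{\mathcal{S},t},\nu_{\mathcal{S}})$ would entail $W_p(\rho_{\mathcal{S}},\nu')<W_p(\rho_{\mathcal{S}},\nu_{\mathcal{S}})$, reducing to the case already treated.
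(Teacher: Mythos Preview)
Your proposal is correct and follows essentially the same route as the paper: build the global competitor $\tilde\nu=\nu-\nu_{\mathcal S}+\nu'$, verify $\L(\tilde\nu)\le\alpha$ via the density constraints (the paper writes this as $\nu-\nu_{\mathcal S}+\nu'=\nuH+(\nuexc-\nu_{\mathcal S})+\nu'\ge\alpha^{-1}\H^1\mres(\Sigma\cup\Gamma)$), use $c$-cyclical monotonicity of $\gamma_{\mathcal S}$ and the glued plan $(\gamma-\gamma_{\mathcal S})+\gamma'$ to compare transport costs, and then reduce the geodesic case to $t=1$ via the constant-speed identity and the triangle inequality. The only cosmetic difference is that you phrase it as a contradiction from a strict improvement, whereas the paper establishes the non-strict inequality $W_p^p(\rho_{\mathcal S},\nu_{\mathcal S})\le W_p^p(\rho_{\mathcal S},\nu')$ directly.
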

\begin{proof} See Appendix~\ref{appendix.localized_variational_problem}.
\end{proof}
We now craft a specific set $\mathcal{S}$ to apply the lemma.
Given $\delta>0$, we define the 
set
\begin{align} \label{set_D_delta}
    D_{\delta} &\eqdef
    \left\{
        x \in \supp\rhoexc : 
        \begin{array}{c}
            \delta \le \dist(x, \Sigma) \le \delta^{-1}
        \end{array}
    \right\}, 
\end{align}
And for a fixed point $y_0 \in \Sigma$, and $\delta,r>0$ consider the new transport plan
\begin{equation}\label{gamma_delta}
    \gamma_{\delta, r} \eqdef \gammaexc \mres D_{\delta}\times B_r(y_0)
\end{equation}
along with its marginals 
\begin{equation}\label{nu_delta}
    \rho_{\delta, r} \eqdef {\pi_{0}}_\sharp\gamma_{\delta,r} \le \rhoexc \mres D_{\delta}, 
    \quad 
    \nu_{\delta, r} \eqdef {\pi_{1}}_\sharp\gamma_{\delta, r}.
\end{equation}
From Lemma~\ref{lemma.auxiliary_problem} it holds that 
\begin{equation}\label{auxiliary_problem}
    \nu_{\delta, r}    
    \in
    \argmin
    \left\{
        W_p^p\left(
            \rho_{\delta, r}, \nu'
        \right) : 
        \begin{array}{c}
        \text{there is $\Gamma$ such that }\\ 
            \nu' \in \mathcal{M}_+(\Sigma \cup \Gamma),\\ 
            \nu' \ge \alpha^{-1}\mathcal{H}^1\mres (\Gamma\setminus\Sigma), \\ 
            \Sigma \cup \Gamma \in \mathcal{A}, \
            \nu'(\RR^d) = \nu_{\delta, r}(\RR^d)
        \end{array} 
    \right\}. 
\end{equation}
We also introduce 
\begin{equation}\label{measure_nu_delta}
    \gamma_\delta\eqdef\gammaexc\mres D_{\delta}\times\Sigma
    \text{ and }
    \nu_\delta \eqdef {\pi_1}_\sharp \gamma_\delta,
\end{equation}
so that by definition, $\nu_{\delta, r} = \nu_\delta \mres B_r(y_0)$ and $\nuexc$ can be further decomposed as
$
    \nuexc = \nu_\delta + {\pi_{1}}_\sharp\left(\gammaexc \mres D_\delta^c \times \RR^d\right). 
$
As $D_\delta$ is a nested sequence of sets, $(\nu_\delta)_{\delta > 0}$ is a monotone sequence and taking the limit as $\delta \to 0$ we have  
\begin{equation}\label{nuexc.further_decomposition}
    \nuexc = \sup_{\delta > 0} \nu_\delta + \rhoexc\mres \Sigma,
\end{equation}
the second limit being $\rhoexc\mres \Sigma$ because
of Lemma~\ref{lemma.minimal_distance2Sigma} and
since the only projection of a point in $\Sigma$ is itself. 

In the next Theorem~\ref{theorem.solutions_are_absolutely_continuous} we show that the measures $\nu_\delta$ have a uniformly 
bounded density w.r.t.~$\H^1$. So when $\rho_0$ is absolutely continuous w.r.t.~$\H^1$,~\eqref{nuexc.further_decomposition}
shows that any optimal $\nu \ll \H^1$.
The argument consists in crafting a competitor for the 
localized problem \eqref{auxiliary_problem}, built as a measure supported on a curve with controlled length, defined over small sphere, centered at an arbitrary point of the support of $\nu_\delta$. Letting the radius of this sphere go to zero, and comparing the energy of this competitor and the optimal measure, gives a uniform bound on the density.
This strategy is illustrated in Figure~\ref{figure.L_infty_bounds}.

\begin{lemma}\label{lemma.curve_on_B2}
    Let $B_2$ be the ball on $\RR^d$ centered at the origin. There exists a connected set $\Gamma_d \subset \partial B_2$ with $\H^1(\Gamma_d) < +\infty$ and such that 
    \[
        \dist(x, \Gamma_d) \le |x - y| - \frac{1}{2}
    \]
    for any $x \not\in B_2$ and for all $y \in B_1$.
\end{lemma}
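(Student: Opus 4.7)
The first step is to reduce the requested inequality to a purely spherical covering condition. For fixed $x \notin B_2$, the infimum of $\abs{x-y}$ over $y \in B_1$ equals $\abs{x}-1$ (attained at $y=x/\abs{x}$). Thus the universal-$y$ bound is equivalent to
\begin{equation*}
\dist(x,\Gamma_d) \le \abs{x} - \tfrac{3}{2} \qquad \text{for all } x \notin B_2.
\end{equation*}
Let $p(x) := 2x/\abs{x} \in \partial B_2$ denote the radial projection. If $\Gamma_d \subset \partial B_2$ is constructed so that $\dist(p,\Gamma_d) \le \tfrac{1}{2}$ for every $p \in \partial B_2$, then picking any $q \in \Gamma_d$ with $\abs{p(x)-q} \le \tfrac{1}{2}$ gives, by the triangle inequality,
\begin{equation*}
\dist(x,\Gamma_d) \le \abs{x-q} \le \abs{x-p(x)} + \abs{p(x)-q} \le (\abs{x}-2) + \tfrac{1}{2} = \abs{x} - \tfrac{3}{2},
\end{equation*}
as required. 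Hence the task reduces to producing a connected subset of $\partial B_2$ that is $\tfrac12$-dense in $\partial B_2$ and has finite 1-dimensional Hausdorff measure.

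The construction is a "spider" on the sphere. Since $\partial B_2$ is compact, there exists a finite $\tfrac{1}{2}$-net $\{y_1,\dots,y_N\} \subset \partial B_2$, i.e.\ every point of $\partial B_2$ is within Euclidean distance $\tfrac{1}{2}$ of some $y_i$. Define
\begin{equation*}
\Gamma_d := \bigcup_{i=1}^{N} \mathrm{arc}(y_1, y_i),
\end{equation*}
where $\mathrm{arc}(y_1,y_i)$ is a great-circle arc joining $y_1$ and $y_i$ on $\partial B_2$ (any choice of shortest geodesic works; for $i=1$ this is a single point). Then $\Gamma_d \subset \partial B_2$, it is connected (every arc contains $y_1$), and
\begin{equation*}
\H^1(\Gamma_d) \le \sum_{i=1}^{N} \H^1(\mathrm{arc}(y_1,y_i)) \le N\cdot 2\pi < +\infty.
\end{equation*}
Moreover each $y_i$ belongs to $\Gamma_d$, so the $\tfrac{1}{2}$-density of the net on $\partial B_2$ transfers immediately to $\Gamma_d$.

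There is essentially no hard step here: compactness of $\partial B_2$ supplies the finite net, the triangle-inequality computation handles the projection loss exactly with the slack $\tfrac12$ prescribed, and the star topology ensures connectedness and a crude but sufficient length bound. The only choice to make is dimension-independent bookkeeping of the constant $N=N(d)$, which we absorb into the notation $\Gamma_d$.
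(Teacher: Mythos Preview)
Your proof is correct and essentially the same as the paper's: both build $\Gamma_d$ by taking a finite $\tfrac12$-net on $\partial B_2$ and connecting the net points with geodesic arcs (the paper mentions a Steiner tree, you use a star, but any connected graph works). The only cosmetic difference is in verifying the inequality: you first reduce to the worst case $y=x/|x|$ and then use the radial projection $p(x)=2x/|x|$, while the paper keeps $y$ generic and uses the intersection point $\hat y=[x,y]\cap\partial B_2$; the two triangle-inequality computations are equivalent.
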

\begin{proof}
    We start by covering the sphere $\partial B_2$ with finitely many balls $\left(B_{1/2}(x_i)\right)_{i = 1}^{N_d}$, each having radius $1/2$. The number of balls $N_d$ being dependent on the dimension. In the sequel we define $\Gamma_d$ with geodesics on $\partial B_2$ connecting the centers $\left(x_i\right)_{i = 1}^{N_d}$. 

    As we have finitely many points, we will also have finitely many curves and hence $\H^1(\Gamma_d)$ must be a dimensional constant. We can even choose the connected set $\Gamma_d$ with minimal length,
    which is a solution to Steiner's problem on the spheres and has a tree structure,
    so that we can bound $\H^1(\Gamma_d) \le (N_d - 1)D_d$, where $D_d$ is the diameter of $\partial B_2$ in its Riemannian metric. 

    To prove the desired property, take $x \not\in B_2$ and $y \in B_1$. Let $\{\hat y\} = [x,y]\cap \partial B_2$. Then $\hat y \in B_{1/2}(x_i)$  for some $x_i$
    while $|x-\hat y|=|x-y|-|\hat y-y|\le |x-y|-1$, and it follows:
    \[
        \dist(x, \Gamma_d) 
        \le |x - x_i| 
        \le |x - \hat y| + |\hat y - x_i|
        \le |x - y| - \frac{1}{2},
    \]
    which gives the desired construction. 
\end{proof}

\begin{figure}[t]
    \centering
    \input{L_infty_tikz.tex}
    \caption{Scheme of the proof of Thm.~\ref{theorem.solutions_are_absolutely_continuous}. For the new competitor, created with the curve $\Gamma$ from Lemma~\ref{lemma.curve_on_B2}, we pay a little more in the transportation cost to generate $\alpha^{-1}\H^1\mres \Gamma_r$, but pay much less by projecting the remaining mass onto it.}\label{figure.L_infty_bounds}
\end{figure}

\begin{theorem}\label{theorem.solutions_are_absolutely_continuous}
    Given $\rho_0 \in \mathcal{P}_p(\RR^d)$, let $\nu$ be a solution to \eqref{problem.shape_optimization_relaxed}. 
    Then it holds that the measures $\left(\nu_\delta\right)_{\delta > 0}$ are of the form 
    \[
        \nu_\delta = \theta_\delta \H^1\mres \Sigma,   
        \text{ 
        with $ 
        \norm{\theta_\delta}_{L^\infty(\Sigma, \H^1)}
        \le 
        \frac{7}{2}\frac{C_d}{\mathcal{L}(\nu)}$,}
    \]
    for $C_d = 2 + \H^1(\Gamma_d)$, $\Gamma_d$ being the set from Lemma \ref{lemma.curve_on_B2}. 
    
    Therefore, 
    if $\rho_0 \ll \H^1$ or has a $L^{\infty}$ density w.r.t. $\H^1$, so does $\nu$, in particular it is a rectifiable measure. 
\end{theorem}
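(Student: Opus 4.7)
My plan is to reduce the theorem to a pointwise upper-density bound: I claim that for every $y_0 \in \Sigma$,
\[
\limsup_{r \to 0^+} \frac{\nu_\delta(B_r(y_0))}{2r} \le \frac{9 C_d}{2\alpha}.
\]
Once established, this forces $\nu_\delta$ to be absolutely continuous with respect to $\H^1 \mres \Sigma$ (otherwise Besicovitch's differentiation theorem would yield infinite upper derivatives on a set of positive singular mass) with density bounded by $\frac{9 C_d}{2\alpha}$, using that $\H^1(\Sigma \cap B_r(y_0))/(2r) \to 1$ at $\H^1$-a.e.~$y_0 \in \Sigma$ by $1$-rectifiability. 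From the decomposition~\eqref{nuexc.further_decomposition}, $\nuexc = \sup_\delta \nu_\delta + \rhoexc \mres \Sigma$, and since $\rhoexc \le \rho_0$ inherits the hypothesis on $\rho_0$, both parts are absolutely continuous (or have $L^\infty$ density) with respect to $\H^1 \mres \Sigma$; adding $\nuH = \alpha^{-1} \H^1 \mres \Sigma$ yields the conclusion.

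The density bound is proved by contradiction via Lemma~\ref{lemma.auxiliary_problem}. Assume there exist $y_0 \in \Sigma$ and arbitrarily small $r \in (0, \delta/2)$ with $m := \nu_\delta(B_r(y_0)) > 9 r C_d / \alpha$. Then $\nu_{\delta, r}$ is a minimizer of~\eqref{auxiliary_problem}, so producing a feasible $\nu'$ with strictly smaller Wasserstein cost gives a contradiction. Scaling Lemma~\ref{lemma.curve_on_B2} around $y_0$ by $r$ and adjoining a radial segment from $y_0$ to $\partial B_{2r}(y_0)$ yields a compact connected set $\tilde\Gamma \subset \overline{B_{2r}(y_0)}$ of length $\le r C_d$, satisfying the key inequality
\[
\dist(x,\tilde\Gamma) + r/2 \le |x-y| \quad\text{for all } x \notin B_{2r}(y_0),\ y \in B_r(y_0).
\]
Because $r < \delta/2$ and $\supp \rho_{\delta, r} \subset D_\delta$, this applies to every $(x, y) \in \supp \gamma_{\delta, r}$. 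I then set
\[
\nu' := \alpha^{-1} \H^1 \mres \tilde\Gamma + \mu,
\]
with $\mu$ a nonnegative measure on $\Sigma$ of total mass $m - \alpha^{-1} r C_d > 0$ (positivity ensured by the threshold on $m$); this $\nu'$ is supported on $\Sigma \cup \tilde\Gamma \in \A$, has total mass $m$, and satisfies the density lower bound on $\tilde\Gamma$.

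The comparison of Wasserstein costs is the core of the argument. I split $\rho_{\delta, r} = \rho' + \rho''$ proportionally with $\rho'(\mathbb{R}^d) = \alpha^{-1} r C_d$, route $\rho'$ to $\alpha^{-1}\H^1\mres \tilde\Gamma$ (by nearest-point projection on $\tilde\Gamma$ followed by a local redistribution), and $\rho''$ to $\mu$ (chosen so that the $\rho'' \to \mu$ piece recycles the corresponding restriction of $\gamma_{\delta,r}$). The scaled Lemma~\ref{lemma.curve_on_B2} gives a saving of at least $r/2$ per unit of mass routed to $\tilde\Gamma$ (upgraded to an $L^p$ estimate via $a^p - (a - r/2)^p \gtrsim r\,(a - r/2)^{p-1}$), while the local redistribution along $\tilde\Gamma$ (of diameter $\le 4r$) introduces a correction of order $r$ per unit mass. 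The constant $9/2$ is calibrated precisely so that, under the hypothesis $m > 9 r C_d / \alpha$, the savings strictly dominate this correction, yielding $W_p^p(\rho_{\delta, r}, \nu') < W_p^p(\rho_{\delta, r}, \nu_{\delta, r})$ and the contradiction. This quantitative comparison---balancing a gain linear in $m \cdot r$ against a price quadratic in $r$---is the main obstacle, and is where the specific geometry of $\tilde\Gamma$ from Lemma~\ref{lemma.curve_on_B2} is essential.
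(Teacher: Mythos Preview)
Your overall strategy---reduce to a pointwise upper-density bound, derive it by contradiction via the localized problem~\eqref{auxiliary_problem}, and build a competitor supported on a scaled version of the set $\Gamma_d$ from Lemma~\ref{lemma.curve_on_B2}---is exactly the paper's approach. The gap is in how the competitor is assembled, and it is fatal as written.

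You split $\rho_{\delta,r}=\rho'+\rho''$, send the small portion $\rho'$ (of mass $\alpha^{-1}\H^1(\tilde\Gamma)\le \alpha^{-1}C_d r$) to the uniform measure on $\tilde\Gamma$, and send the large portion $\rho''$ back to $\mu$ on $\Sigma$ by \emph{recycling} the restriction of $\gamma_{\delta,r}$. But recycling means the $\rho''\!\to\!\mu$ cost is exactly the original cost $\int\dist(x,\Sigma)^p\,\dd\rho''$: there is no saving whatsoever on this portion. On the $\rho'$ portion, projection onto $\tilde\Gamma$ saves $r/2$ per point, but the subsequent redistribution along $\tilde\Gamma$ (diameter $\le 4r$) adds up to $4r$ per point; the net effect on the $p$-th power cost is a \emph{loss} of order $\tfrac{7}{2}r\cdot p\,\dist(x,\Sigma)^{p-1}$ per unit of $\rho'$-mass. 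Your competitor therefore has strictly \emph{larger} Wasserstein cost than $\nu_{\delta,r}$, and no value of the threshold can reverse this---the ``calibration'' of $9/2$ does not enter your balance at all, because both saving and correction act on the same small portion $\rho'$.

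The paper's fix is to reverse the roles of the two portions. Write $m_r:=\H^1(\Gamma_r)/(\alpha\,\nu_\delta(B_r))$; the \emph{small} fraction $m_r$ of $\rho_{\delta,r}$ is transported to $\alpha^{-1}\H^1\mres\Gamma_r$ (incurring the correction $\lesssim 4r$ per unit), while the \emph{large} remaining fraction $(1-m_r)$ is not left on $\Sigma$ but \emph{projected onto $\Gamma_r$}. It is this projection of the bulk of the mass that produces the saving $r/2$ per unit from Lemma~\ref{lemma.curve_on_B2}. Optimality of $\nu_{\delta,r}$ then forces
\[
4\,m_r\,(1+4\varepsilon)^{p-1}\;\ge\;\tfrac12\,(1-m_r)\,(1-\tfrac{\varepsilon}{2})^{p-1},
\]
and letting $r\to 0$, $\varepsilon\to 0$ gives $m_0\ge \tfrac19$, i.e.\ $\theta_\delta(y_0)\le \tfrac{9}{2}\,C_d/\alpha$. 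In short: the excess mass must go onto $\Gamma_r$, not back onto $\Sigma$; that is where the constant $9/2$ actually comes from.
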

\begin{proof}
  
	For $y_0 \in \Sigma$, let us define the one-dimensional upper density~\cite[Def.~2.55]{ambrosio2000functions}
\begin{align*}
	\theta_\delta(y_0)\eqdef\limsup_{r \to 0} \frac{\nu_{\delta}(B_r)}{2r}.
\end{align*}
We will show that $\theta_{\delta}(y_0)\leq \frac{7}{2}\frac{C_d}{\L(\nu)}$, so that thanks to \cite[Thm. 2.56]{ambrosio2000functions}, $\nu_\delta \ll \H^1\mres\Sigma$.
Since $\Sigma$ is 1-rectifiable, it follows that for $\H^1$-a.e.~$y_0 \in \Gamma$, $\theta_\delta(y_0)$ is the  Radon-Besicovitch 
derivative of $\nu_\delta$ w.r.t.~$\H^1\mres\Sigma$, and the claim of the theorem follows.

	From the optimality of $\nu$, the measure $\nu_{\delta,r}$ solves problem \eqref{auxiliary_problem}. In order to build a competitor we consider the set $\Gamma_d$ from Lemma \ref{lemma.curve_on_B2}, choose some point $\bar y \in \Gamma_d$ and define
    \[
        \Gamma_r 
        \eqdef
        [y_0, y_0 + r\bar y]\cup 
        \left(
            y_0 + r\Gamma_d
        \right), 
    \] 
    which is contained in $\overline{B}_{2r}(y_0)$. Notice that $\Sigma \cup \Gamma_{r}$ is always a compact, connected and $1$-rectifiable set and one has 
    \[
        \H^1(\Gamma_{r}) 
        = 
        C_dr,  
    \]
    where $C_d = 1 + \H^1(\Gamma_d)$ is a constant depending only on the dimension. 

    In the sequel, setting $\alpha = \mathcal{L}(\nu)$ we define the following parameter 
    \[
        m_{r} \eqdef 
\frac{\H^1(\Gamma_r)}{\alpha \nu_\delta(B_r)}.\] 
   Suppose that $C_d/\alpha < 2\theta_{\delta}(y_0)$. Then,
\begin{align*}
	1> m_0\eqdef\frac{C_d}{2\alpha\theta_\delta(y_0)}= \liminf_{r \to 0} m_r.
\end{align*}
Now, we consider a subsequence $(r_{k})_{k\in \NN}\searrow 0$ such that $\lim_{k \to \infty} m_{r_k}=\liminf_{r \to 0} m_r$. In particular,  $m_{r_k} \in (0,1)$ for $r_k$ sufficiently small. 
For simplicity, in  the sequel, we drop the subscript $k$, yet we consider only $r \in \{r_{k}\}_{k\in \NN}$.

    Let $\gamma_{\Gamma_{r}}$ be an optimal transport plan between $m_r\rho_{\delta, r}$ and $\alpha^{-1}\H^1\mres \Gamma_{r}$ for the Wasserstein-$p$ distance and define the new plan 
    \[
        \tilde \gamma_{\delta, r}
        \eqdef 
        \gamma_{\Gamma_{r}}
        + 
        (1-m_{r}) 
        \left(
            \text{id}, 
            \pi_{\Gamma_{r}}
        \right)_\sharp
        \rho_{\delta, r},
        \text{ and }
        \tilde\nu_{\delta, r}
        \eqdef {\pi_{1}}_\sharp \tilde \gamma_{\delta, r},
    \]
    where $\pi_{\Gamma_{r}}$ is a measurable selection of the projection operator onto $\Gamma_{r}$. This construction is illustrated in Figure~\ref{figure.L_infty_bounds}.Therefore $\tilde\nu_{\delta, r}$ is admissible for \eqref{auxiliary_problem} and we have the following estimate
    \begin{align*}
        W_p^p(\rho_{\delta, r}, \tilde\nu_{\delta, r})
        \le 
        \int_{\RR^d\times \RR^d} |x - y|^p\dd \gamma_{\Gamma_{r}}
        + 
        (1 - m_{r})\int_{\RR^d} \dist(x, \Gamma_r)^p\dd \rho_{\delta,r}.
    \end{align*}

    We will estimate each term of the previous inequality separately. For the first one, notice that as $\supp \gamma_{\Gamma_{r}} \subset \Pi_{\Sigma}^{-1}(B_r(y_0)) \times \overline{B}_{2r}(y_0)$, it holds that 
    \[
        |x - y| \le \dist(x,\Sigma) + 3r, \quad \text{ for $\gamma_{\Gamma_{r}}$-a.e. $(x,y)$}.     
    \]
    For the second term, as the projection of $x$ onto $\Sigma$ is inside $B_r(y_0)$, if follows from Lemma \ref{lemma.curve_on_B2} that 
    \[
        \dist(x, \Gamma_r) \le \dist(x, \Sigma) - \frac{r}{2}, \text{ for } \dist(x, \Sigma) > 2r. 
    \]
    Therefore, for a fixed $\delta$ and taking $2r < \delta$, the Wasserstein distance is bounded by 
    \begin{multline*}
        W_p^p(\rho_{\delta, r}, \tilde\nu_{\delta, r})
        \le 
        m_{r}
        \int_{\RR^d}  
        \left(\dist(x,\Sigma) + 3r\right)^p\dd \rho_{\delta,r} 
        +
        (1 - m_r)
        \int_{\RR^d}  
        \left(\dist(x, \Sigma) - r/2\right)^p\dd \rho_{\delta,r}.
    \end{multline*}

    Notice that 
    $\disp
        W_p^p(\rho_{\delta, r},\nu_{\delta, r}) 
        = 
        \int_{\RR^d} \dist(x,\Sigma)^p\dd \rho_{\delta,r},
    $
    so in order to compare the Wasserstein distances we use the following inequalities 
    \begin{align*}
        \left(\dist(x,\Sigma) + 3r\right)^p
        &\le 
        \dist(x,\Sigma)^p 
        + 
        3rp
        \left(\dist(x,\Sigma)+3r\right)^{p-1} \\ 
        \left(\dist(x,\Sigma) -\frac{r}{2}\right)^p
        &\le 
        \dist(x,\Sigma)^p 
        -
        \frac{r}{2}p
        \left(\dist(x,\Sigma)-\frac{r}{2}\right)^{p-1}
    \end{align*}
    which follow from the convexity of $t\mapsto |t|^p$.
    Then, given $\varepsilon>0$, if $r\le \delta\varepsilon$ one deduces,
    for $\dist(x,\Sigma)\ge \delta$, that:
    \begin{align*}
        \left(\dist(x,\Sigma) + 3r\right)^p
        &\le 
        \dist(x,\Sigma)^p 
        + 
        3 r p(1+3\varepsilon)^{p-1}
        \dist(x,\Sigma)^{p-1} \\ 
        \left(\dist(x,\Sigma) -\frac{r}{2}\right)^p
        &\le 
        \dist(x,\Sigma)^p 
        -
        \frac{r}{2}p \left(1-\frac{\varepsilon}{2}\right)^{p-1}
        \dist(x,\Sigma)^{p-1}.
    \end{align*}

    Therefore it holds that 
    \begin{align*}
        W_p^p(\rho_{\delta, r}, \tilde\nu_{\delta, r})
        &\le
        W_p^p(\rho_{\delta, r},  \nu_{\delta, r})
        + 
        pr\Delta_{r,\varepsilon}
        \int_{\RR^d}
        \dist(x,\Sigma)^{p-1}\dd \rho_{\delta,r}\\ 
        \text{ for }
        \Delta_{r,\varepsilon} 
        &=
            3m_{r}
            \left(
                1 + 3\varepsilon
            \right)^{p-1}
            -
            \frac{1 - m_r}{2}
            \left(
                1 - \frac{\varepsilon}{2}
            \right)^{p-1}
    \end{align*}
    Hence from the optimality of $\nu_{\delta,r}$ we have $\Delta_{r,\varepsilon} \ge 0$, so that letting $r \to 0$ and then $\varepsilon\to 0$, it must hold that
    $3 m_0\ge (1-m_0)/2$, that is:
    \[
        \theta_\delta(y_0)
        \le 
        \frac{7}{2}\frac{C_d}{\alpha}. 
    \]
As a result, the family $\left(\nu_\delta\right)_{\delta > 0}$ has a uniform $L^\infty$ density bounds, and so does the limit measure  $\sup_{\delta > 0} \nu_\delta = \left(\sup_{\delta > 0}\theta_\delta\right)\H^1\mres \Sigma$. But as the exceeding measure can be decomposed as~\eqref{nuexc.further_decomposition} we deduce that
    whenever the initial measure $\rho_0 \ll \H^1$ or has a $L^{\infty}$ density w.r.t. $\H^1$, so does the solution $\nu$. 
\end{proof}

\section{Existence of solutions to \eqref{problem.shape_optimization}}\label{section.blowup}

This section is dedicated to the proof of~Theorem \ref{theorem.the_big_one}, item~(2). Knowing 
that the excess measure is absolutely continuous (Theorem~\ref{theorem.solutions_are_absolutely_continuous}), we use
a blow up argument near a rectifiability point $y_0$ of $\Sigma$.
From~Lemma \ref{lemma.auxiliary_problem}, the blow-ups of $\nuexc$ minimize a family of functionals $(F_r)_{r > 0}$, which in turn $\Gamma$-converge to some functional $F$.
Since these blow-ups also converge (for $\H^1$-a.e.~$y_0$) to a uniform density on $T_{y_0}\Sigma$, this limit measure must also minimize the $\Gamma$-limit $F$.
Yet if it is not zero, we can build a better competitor (Lemma~\ref{lemma.better_competitor} below), giving a contradiction to the minimality of the uniform measure.
We deduce that $\nuexc$ vanishes.

\subsection{Blow-up and $\Gamma$-convergence}\label{sec.gamma_convergence}
In the sequel, we assume that $\rho_0 \ll \mathcal{H}^1$, so that from Theorem~\ref{theorem.solutions_are_absolutely_continuous} any minimizer $\nu$, as well as ${(\nu_\delta)}_{\delta > 0}$ (defined in~\eqref{measure_nu_delta}), are rectifiable measures and we can write 
\[
    \nu_\delta = \theta_\delta \mathcal{H}^1\mres \Sigma, 
    \text{ for }
    \theta_\delta \in L^1(\H^1\mres \Sigma). 
\]

Observe that $\nu_\delta$-a.e.~$y \in \Sigma$ is a rectifiability point, and we choose  $y_0 \in \Sigma$ such that:
\begin{equation}\label{y0fixed_for_blowup}
    T_{y_0}\Sigma \text{ exists }
    \quad\text{ and } \quad 
    y_0 \text{ is a Lebesgue point of }
    \theta_\delta.
\end{equation}

We then use Lemma~\ref{lemma.auxiliary_problem} with the choice $\mathcal{S}_0\times \mathcal{S}_1 = D_{\delta}\times B_r(y_0)$, and we focus on the variational problem~\eqref{auxiliary_problem_sigma}: we obtain the families of measures $\left(\nu_{\delta ,r}\right)_{r > 0}$ and $\left(\sigma_{\delta ,r}\right)_{r > 0}$ as $\nu_{\delta ,r} \eqdef \nu_\delta \mres B_r(y_0)$ and $\sigma_{\delta ,r} \eqdef {\pi_{(1-r)}}_\sharp\gamma_{\delta,r}$, where $\left(\sigma_{\delta,t}\right)_{t \in [0,1]}$ is a family of geodesic interpolations, as in Lemma~\ref{lemma.auxiliary_problem}, so that
\begin{equation}\label{auxiliary_problem_geodesic}
    \nu_{\delta ,r}    
    \in
    \argmin
    \left\{
        W_p^p\left(
            \sigma_{\delta ,r}, \nu'
        \right) : 
        \begin{array}{c}
            \text{there is $\Gamma$ such that }\\ 
            \nu' \in \mathcal{M}_+(\Sigma \cup \Gamma),\\ 
            \nu' \ge \alpha^{-1}\mathcal{H}^1\mres \left(\Gamma\setminus \Sigma\right), \\ 
            \Sigma \cup \Gamma \in \mathcal{A}, \
	    \nu'(\RR^d) = \nu_{\delta ,r}(\RR^d)
        \end{array} 
    \right\}.
\end{equation}
From Lemma~\ref{lemma.minimal_distance2Sigma} the optimal transport plan between $\nu_{\delta ,r}$ and $\sigma_{\delta ,r}$ is supported on $\text{graph}(\Pi_\Sigma)$. 

The sequence of measures $\nu_{\delta ,r}$ are essentially a localization of $\nu_\delta$ around $y_0$ so, by the blow-up Theorem \ref{theorem.rectifiability_tangentiability} (see also \cite[Theo. 2.83]{ambrosio2000functions}), it holds that
\begin{equation}\label{blowup_measures_nu}
    r^{-1}\Phi^{y_0,r}_{\sharp}\nu_{\delta ,r} \xrightharpoonup[r \to 0]{\star} 
    \theta_\delta(y_0)\mathcal{H}^1\mres [-\tau, \tau], 
    \text{ where $\RR\tau = T_{y_0}\Sigma$.}
\end{equation}
Up to a subsequence (not labelled) we also have: 
\begin{equation}\label{blowup_measures_sigma}
    r^{-1}\Phi^{y_0,r}_{\sharp}\sigma_{\delta ,r} \xrightharpoonup[r \to 0]{\star} 
    \bar \sigma_{\delta}
\end{equation}
for some measure $\bar \sigma_{\delta}$.
By construction  $\sigma_{\delta ,r}$ is supported on $\{r\delta^{-1}\ge\dist(\cdot,\Sigma)\ge r\delta\}$, so that $\supp \bar \sigma_{\delta} \subset \{x : \delta^{-1} \ge \dist(x,\RR\tau) \ge \delta \}$. 

In view of~\eqref{blowup_measures_nu} and~\eqref{blowup_measures_sigma}, we introduce the blow-ups of the measures $\nu_{\delta ,r}$ and $\sigma_{\delta , r}$,
\begin{equation}\label{blowup_measures_renormalized}
    \bar\nu_{\delta ,r}
    \eqdef
    \frac{1}{r}
    \Phi^{y_0,r}_{\sharp}
    \nu_{\delta ,r}, 
    \quad 
    \bar\sigma_{\delta ,r}
    \eqdef
    \frac{1}{r}\Phi^{y_0,r}_{\sharp}
    \sigma_{\delta , r},
    \text{ and the set }
    \Sigma_r \eqdef 
    \frac{\Sigma - y_0}{r}\cap \overline{B_1(0)}.
\end{equation}

In addition, we define a family of functionals ${\left(F_r\right)}_{r > 0}$ as
\begin{equation}\label{family_Fr}
    F_r(\nu') \eqdef 
        \begin{cases}
            W_p^p\left( 
                \disp 
                \bar \sigma_{\delta ,r},  
                \nu' 
            \right),&
                \begin{array}{c}
                    \text{there is $\Gamma \subset \overline{B_1(0)}$ such that} \\ 
			        \nu' \in \mathcal{M}_+\left(\Sigma_r \cup \Gamma\right), \
                    \nu' \ge \alpha^{-1}\H^1\mres 
                    \left(\Gamma\setminus \Sigma_r\right),\\
                    \left(\frac{\Sigma-y_0}{r}\right) \cup \Gamma \text{ closed and connected }, \\ 
                    \displaystyle
                    \nu'(\overline{B_1(0)}) = \frac{\nu_\delta(B_r(y_0))}{r}, 
                \end{array} \\ 
            & \\ 
            +\infty,& \text{ otherwise,}
        \end{cases}
\end{equation}
where $\alpha = \mathcal{L}(\nu)$. 
Observing that for any given measures $\mu', \nu'$ we have
\begin{equation}\label{blowup_Wasserstein_identity}
    W_p^p
    \left(
        \frac{1}{r}\Phi^{y_0, r}_\sharp \mu', 
        \frac{1}{r}\Phi^{y_0, r}_\sharp \nu'
    \right) 
    = 
    \frac{1}{r^{p+1}} 
    W_p^p\left( \mu',  \nu'\right). 
\end{equation}
and recalling \eqref{auxiliary_problem_geodesic}, we see that $\bar \nu_{\delta ,r} \in \argmin F_r$ for any $r>0$.

The natural candidate for the limit of this family is the following:
\begin{equation}\label{Gamma_limitF}
    F(\nu') \eqdef 
    \begin{cases}
        W_p^p\left( \bar \sigma_{\delta}, \nu' \right),&
                \begin{array}{c}
                    \text{there is $\Gamma \subset \overline{B_1(0)}$ such that}\\ 
			        \nu' \in \mathcal{M}_+\left([-\tau, \tau] \cup \Gamma\right), 
			        \ \nu' \ge \alpha^{-1}
                    \H^1\mres\left(
                        \Gamma\setminus[-\tau,\tau]
                    \right),\\
                    \RR\tau \cup \Gamma \text{ closed and connected},\\ \nu'(\overline{B_1(0)}) = 2\theta_\delta(y_0),
                \end{array} \\ 
        +\infty,& \text{ otherwise.}
    \end{cases}
\end{equation}
We prove in Theorem~\ref{theorem.Gamma_convergence} below that $F_r$ $\Gamma$-converges to $F$ as $r\to 0^+$. We refer to \cite{dal_maso_introduction_1993,braides_gamma-convergence_2002} and in particular to~\cite[Def. 1.24]{braides_gamma-convergence_2002}) for the definition of the (lower and upper) $\Gamma$-limit. From the properties of the $\Gamma$-convergence, see~\cite[Cor.~7.20]{dal_maso_introduction_1993}, it follows that $\theta_\delta(y_0)\H^1\mres [-\tau, \tau]$ must be a minimizer of $F$ (as the limit of minimizers of $F_r$). The estimate from below of the $\Gamma$-liminf is obtained with the tools developed so far, while estimating the $\Gamma$-limsup will require an appropriate construction illustrated in Figure~\ref{figure.gamma_convergence}.

\begin{figure}[t]
    \centering
    \resizebox{1\linewidth}{!}{%

\tikzset{every picture/.style={line width=0.75pt}} 

\begin{tikzpicture}[x=0.55pt,y=0.55pt,yscale=-1,xscale=1]
\centering

\draw   (1.6,80.3) .. controls (1.6,36.72) and (37.78,1.4) .. (82.4,1.4) .. controls (127.02,1.4) and (163.2,36.72) .. (163.2,80.3) .. controls (163.2,123.88) and (127.02,159.2) .. (82.4,159.2) .. controls (37.78,159.2) and (1.6,123.88) .. (1.6,80.3) -- cycle ;
\draw   (247.6,79.8) .. controls (247.6,35.95) and (282.88,0.4) .. (326.4,0.4) .. controls (369.92,0.4) and (405.2,35.95) .. (405.2,79.8) .. controls (405.2,123.65) and (369.92,159.2) .. (326.4,159.2) .. controls (282.88,159.2) and (247.6,123.65) .. (247.6,79.8) -- cycle ;
\draw   (489,80.31) .. controls (489,36.18) and (526.88,0.41) .. (573.6,0.41) .. controls (620.32,0.41) and (658.2,36.18) .. (658.2,80.31) .. controls (658.2,124.43) and (620.32,160.2) .. (573.6,160.2) .. controls (526.88,160.2) and (489,124.43) .. (489,80.31) -- cycle ;
\draw [color={rgb, 255:red, 208; green, 2; blue, 27 }  ,draw opacity=1 ][line width=1.5]    (82.4,1.4) -- (82.4,159.2) ;
\draw [color={rgb, 255:red, 74; green, 144; blue, 226 }  ,draw opacity=1 ][line width=1.5]    (53.23,89.16) .. controls (96.44,57.52) and (66.2,108.15) .. (109.41,76.5) ;
\draw [color={rgb, 255:red, 74; green, 144; blue, 226 }  ,draw opacity=1 ][line width=1.5]    (41.35,123.97) .. controls (43.51,132.41) and (49.99,144.01) .. (82.4,132.41) ;
\draw [color={rgb, 255:red, 74; green, 144; blue, 226 }  ,draw opacity=1 ][line width=1.5]    (82.4,40.64) .. controls (93.2,29.04) and (101.84,39.58) .. (123.45,42.75) ;
\draw [color={rgb, 255:red, 74; green, 144; blue, 226 }  ,draw opacity=1 ][line width=1.5]    (297.96,88.5) .. controls (340.1,56.66) and (310.6,107.61) .. (352.74,75.77) ;
\draw [color={rgb, 255:red, 74; green, 144; blue, 226 }  ,draw opacity=1 ][line width=1.5]    (286.37,123.53) .. controls (288.47,132.03) and (294.8,143.7) .. (326.4,132.03) ;
\draw [color={rgb, 255:red, 74; green, 144; blue, 226 }  ,draw opacity=1 ][line width=1.5]    (326.4,39.68) .. controls (336.93,28) and (345.36,38.61) .. (366.43,41.8) ;
\draw [color={rgb, 255:red, 208; green, 2; blue, 27 }  ,draw opacity=1 ][line width=1.5]    (326.4,0.4) .. controls (269.51,28.21) and (372.75,121.62) .. (326.4,159.2) ;
\draw [color={rgb, 255:red, 74; green, 144; blue, 226 }  ,draw opacity=1 ][line width=1.5]    (545.32,88.85) .. controls (590.57,56.81) and (558.9,108.08) .. (604.14,76.03) ;
\draw [color={rgb, 255:red, 74; green, 144; blue, 226 }  ,draw opacity=1 ][line width=1.5]    (545.32,123.03) .. controls (547.59,131.58) and (554.37,143.32) .. (588.3,131.58) ;
\draw [color={rgb, 255:red, 74; green, 144; blue, 226 }  ,draw opacity=1 ][line width=1.5]    (558.9,40.79) .. controls (570.21,29.04) and (579.26,39.72) .. (601.88,42.92) ;
\draw [color={rgb, 255:red, 208; green, 2; blue, 27 }  ,draw opacity=1 ][line width=1.5]    (575.86,0.2) .. controls (514.79,28.18) and (625.63,122.18) .. (575.86,159.99) ;
\draw [color={rgb, 255:red, 208; green, 2; blue, 27 }  ,draw opacity=1 ]   (168.4,91.2) -- (237.8,91.2) ;
\draw [shift={(239.8,91.2)}, rotate = 180] [color={rgb, 255:red, 208; green, 2; blue, 27 }  ,draw opacity=1 ][line width=0.75]    (10.93,-3.29) .. controls (6.95,-1.4) and (3.31,-0.3) .. (0,0) .. controls (3.31,0.3) and (6.95,1.4) .. (10.93,3.29)   ;
\draw [color={rgb, 255:red, 74; green, 144; blue, 226 }  ,draw opacity=1 ]   (410.4,89.2) -- (479.8,89.2) ;
\draw [shift={(481.8,89.2)}, rotate = 180] [color={rgb, 255:red, 74; green, 144; blue, 226 }  ,draw opacity=1 ][line width=0.75]    (10.93,-3.29) .. controls (6.95,-1.4) and (3.31,-0.3) .. (0,0) .. controls (3.31,0.3) and (6.95,1.4) .. (10.93,3.29)   ;

\draw (93.72,11.43) node [anchor=north west][inner sep=0.75pt]  [font=\normalsize,color={rgb, 255:red, 74; green, 144; blue, 226 }  ,opacity=1 ]  {$\Gamma _{i}$};
\draw (88.29,115.11) node [anchor=north west][inner sep=0.75pt]  [font=\normalsize,color={rgb, 255:red, 208; green, 2; blue, 27 }  ,opacity=1 ]  {$[ -\tau ,\tau ]$};
\draw (355.88,109.37) node [anchor=north west][inner sep=0.75pt]  [font=\normalsize,color={rgb, 255:red, 208; green, 2; blue, 27 }  ,opacity=1 ]  {$\Sigma _{r}$};
\draw (337.45,9.81) node [anchor=north west][inner sep=0.75pt]  [font=\normalsize,color={rgb, 255:red, 74; green, 144; blue, 226 }  ,opacity=1 ]  {$\Gamma _{i}$};
\draw (567.24,12.62) node [anchor=north west][inner sep=0.75pt]  [font=\normalsize,color={rgb, 255:red, 74; green, 144; blue, 226 }  ,opacity=1 ]  {$\mathcal{T}_{h_{i}}( \Gamma _{i})$};
\draw (187.5,60.01) node [anchor=north west][inner sep=0.75pt]  [color={rgb, 255:red, 208; green, 2; blue, 27 }  ,opacity=1 ] [align=left] {Cost};
\draw (116,153.8) node [anchor=north west][inner sep=0.75pt]  [font=\small,color={rgb, 255:red, 208; green, 2; blue, 27 }  ,opacity=1 ]  {$d_{H}\left(\Sigma_{r} ,[ -\tau ,\tau ]\right) \nu ([ -\tau ,\tau ])$};
\draw (372,148.4) node [anchor=north west][inner sep=0.75pt]  [font=\small,color={rgb, 255:red, 74; green, 144; blue, 226 }  ,opacity=1 ]  {$d_{H}\left(\Sigma_{r} ,[ -\tau ,\tau ]\right) \nu ( \Gamma _{i})$};
\draw (429.5,58.01) node [anchor=north west][inner sep=0.75pt]  [color={rgb, 255:red, 74; green, 144; blue, 226 }  ,opacity=1 ] [align=left] {Cost};
\draw (605.88,110.37) node [anchor=north west][inner sep=0.75pt]  [font=\normalsize,color={rgb, 255:red, 208; green, 2; blue, 27 }  ,opacity=1 ]  {$\Sigma _{r}$};

\end{tikzpicture}
}
    \caption{Transportation argument for the construction of a recovery sequence in the $\Gamma$ convergence of $(F_r)_{r>0}$. Both operations have a transportation cost of the order $\displaystyle d_{H}\left(\Sigma_r ,[ -\tau ,\tau ]\right)$, and hence converge to $0$.}\label{figure.gamma_convergence}
\end{figure}

\begin{theorem}\label{theorem.Gamma_convergence}
The family of functionals ${\left(F_r\right)}_{r > 0}$ $\Gamma$-converges to $F$ as $r\to 0^+$, in the narrow topology.
\end{theorem}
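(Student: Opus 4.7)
The plan is to establish $\Gamma$-convergence with respect to the weak-$\star$ topology on $\mathcal{M}_+(\overline{B_1}(0))$, which coincides with narrow convergence since all admissible measures have support in the common compact $\overline{B_1}(0)$. I would verify the $\Gamma$-liminf and $\Gamma$-limsup inequalities separately.

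For the $\Gamma$-liminf, let $\nu_r \xrightharpoonup[r\to 0]{\star} \nu$. If $\liminf_{r\to 0} F_r(\nu_r)=+\infty$ there is nothing to prove; otherwise, passing to a subsequence realizing the liminf, $F_r(\nu_r)$ is uniformly bounded and each $\nu_r$ has the structure described in~\eqref{family_Fr}: support in $\Sigma_r \cup \Gamma_r$ for some $\Gamma_r \subset \overline{B_1}(0)$ with $\Sigma_r \cup \Gamma_r \in \mathcal{A}$, $\nu_r \ge \alpha^{-1}\mathcal{H}^1 \mres \Gamma_r$, and mass $2\theta_\delta(y_0)$. The density constraint together with the mass cap yields $\mathcal{H}^1(\Gamma_r) \le 2\alpha\theta_\delta(y_0)$; combined with the uniform bound on $\mathcal{H}^1(\Sigma_r)$ inherited from $\Sigma_r \xrightarrow[r\to 0]{d_H} [-\tau,\tau]$ (Lemma~\ref{lemma.blowup_domain_measure}), Blaschke's compactness theorem lets me extract a further subsequence along which $\Gamma_r \xrightarrow[r\to 0]{d_H} \Gamma$ and hence $\Sigma_r \cup \Gamma_r \xrightarrow[r\to 0]{d_H} [-\tau,\tau] \cup \Gamma$. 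Hausdorff limits of connected compacts remain connected, so $[-\tau,\tau] \cup \Gamma$ belongs to $\mathcal{A}$ by Golab's theorem. The density version (Theorem~\ref{theorem.Golab_localversion}) applied to $(\mathcal{H}^1\mres \Gamma_r)$ forces any weak-$\star$ cluster measure to dominate $\mathcal{H}^1\mres \Gamma$; combined with $\nu_r \ge \alpha^{-1}\mathcal{H}^1 \mres \Gamma_r$ and $\nu_r \xrightharpoonup[r\to 0]{\star} \nu$, this gives $\nu \ge \alpha^{-1}\mathcal{H}^1 \mres \Gamma$. The support inclusion and the mass constraint transfer to the weak-$\star$ limit, so $\nu$ is admissible for $F$. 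Finally, the lower semi-continuity of $W_p^p$ under the joint weak-$\star$ convergences $\bar\sigma_r \xrightharpoonup[r\to 0]{\star} \sigma^{y_0}$ and $\nu_r \xrightharpoonup[r\to 0]{\star} \nu$ yields $F(\nu) \le \liminf_{r\to 0} F_r(\nu_r)$.

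For the $\Gamma$-limsup, given $\nu$ admissible for $F$ and supported on $[-\tau,\tau] \cup \Gamma \in \mathcal{A}$, I construct a recovery sequence following the transport geometry of Figure~\ref{figure.gamma_convergence}. Set $\varepsilon_r := d_H(\Sigma_r,[-\tau,\tau]) \to 0$ and pick a Borel selection $\pi_r$ of the nearest-point projection from $[-\tau,\tau]$ onto $\Sigma_r$, so that $|x - \pi_r(x)| \le \varepsilon_r$. To keep the support of the competitor in $\mathcal{A}$, I add a finite union $S_r$ of straight segments of total $\mathcal{H}^1$-length $O(\varepsilon_r)$, linking the (possibly several) components of $\Sigma_r \cap \overline{B_1}(0)$ to each other and to the junction points between $\Gamma$ and $[-\tau,\tau]$. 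I then define
\begin{align*}
    \nu_r := \mu_r + \nu \mres \Gamma + \alpha^{-1}\mathcal{H}^1 \mres S_r,
\end{align*}
where $\mu_r$ is obtained from $\pi_{r\sharp}(\nu \mres ([-\tau,\tau] \setminus \Gamma))$ by a small rescaling restoring the total mass constraint; this rescaling tends to $1$ since $\mathcal{H}^1(S_r) \to 0$. By construction, $\nu_r$ is admissible for $F_r$, $\nu_r \xrightharpoonup[r\to 0]{\star} \nu$, and $W_p(\nu_r,\nu) = O(\varepsilon_r) \to 0$. The triangle inequality for $W_p$ combined with $W_p(\bar\sigma_r,\nu) \to W_p(\sigma^{y_0},\nu)$ (since $W_p$ metrizes weak-$\star$ convergence on the compact $\overline{B_1}(0)$, see~\cite[Thm.~5.10]{santambrogio2015optimal}) yields $\limsup_{r\to 0} F_r(\nu_r) \le F(\nu)$.

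The main obstacle is the construction of $S_r$ in the $\Gamma$-limsup step: even though $\Sigma_r$ converges to the segment $[-\tau,\tau]$ in Hausdorff distance, its restriction to $\overline{B_1}(0)$ may \emph{a priori} split into several components, and one must bridge them back together (as well as to $\Gamma$) using filaments of controlled total length, while respecting the density lower bound $\alpha^{-1}\mathcal{H}^1\mres S_r$ and keeping $\nu_r$ a valid competitor. In the liminf direction, the subtle point is the propagation of the measure inequality $\nu_r \ge \alpha^{-1}\mathcal{H}^1\mres \Gamma_r$ through the weak-$\star$ limit, which is exactly what the density version of Golab's theorem is designed to achieve.
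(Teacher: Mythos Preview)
Your $\Gamma$-liminf argument has a genuine gap at the step where you invoke the density version of Go\l ab's theorem (Theorem~\ref{theorem.Golab_localversion}) for the sequence $(\H^1\mres\Gamma_r)$. That theorem requires the sets to be connected, but the constraint in the definition of $F_r$ only guarantees that the \emph{union} $\Sigma_r\cup\Gamma_r$ (more precisely, the full blow-up $r^{-1}(\Sigma-y_0)\cup\Gamma_r$) is connected; the set $\Gamma_r$ itself may have an unbounded number of connected components, so Go\l ab does not apply to it directly. The paper addresses exactly this point: it works with the connected sets $\Xi_n := r_n^{-1}(\Sigma-y_0)\cup\Gamma_n$, adds $\H^1\mres r_n^{-1}(\Sigma-y_0)$ to both sides of the inequality $\alpha\nu_n\ge \H^1\mres\Gamma_n$ to obtain
\[
\H^1\mres r_n^{-1}(\Sigma-y_0) + \alpha\nu_n \ \ge\ \H^1\mres\Xi_n,
\]
applies Go\l ab to the right-hand side (now a connected sequence), uses the blow-up $\H^1\mres r_n^{-1}(\Sigma-y_0)\xrightharpoonup{\star}\H^1\mres\mathbb{R}\tau$ on the left, and finally subtracts $\H^1\mres\mathbb{R}\tau$ from both sides to recover $\alpha\nu\ge\H^1\mres\Gamma$. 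You flagged this passage as ``subtle'' but did not supply the missing device.

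Your $\Gamma$-limsup construction is different from the paper's and introduces avoidable complications. The paper does not add any bridging segments $S_r$ nor rescale mass: it pushes $\nu\mres[-\tau,\tau]$ forward by a measurable selection of the projection onto $\Sigma_{r_n}$, and for each connected component $\Gamma_i$ of $\Gamma\setminus[-\tau,\tau]$ it simply \emph{translates} $\nu\mres\Gamma_i$ by a vector $h_i$ with $|h_i|\le d_H([-\tau,\tau],\Sigma_{r_n})$ so that the translated component touches $\Sigma_{r_n}$. This preserves total mass exactly and keeps the density constraint trivially (a translation of $\H^1\mres\Gamma_i$). Your approach of adding segments $S_r$ runs into trouble when $\Gamma$ has infinitely many connected components meeting $[-\tau,\tau]$: you would need one bridge per junction, and the total length $\H^1(S_r)$ need not be $O(\varepsilon_r)$. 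The subsequent mass rescaling then fails to tend to~$1$. The paper's translation trick sidesteps both issues.
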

\begin{proof}

    $\Gamma$-liminf: we consider an infinitesimal sequence $(r_n)_{n \in \mathbb{N}}$ such that ${(\nu'_n)}_{n \in \mathbb{N}}$ converges to $\nu'$ in the narrow sense in $\overline{B_1(0)}$, and that $\liminf_{n \to \infty} F_{r_n}(\nu_n') < \infty$ for all $n \in \mathbb{N}$, otherwise there is nothing to prove. 
    
    First we look at the first marginals in the definition of $F_{r_n}$. From~\eqref{blowup_measures_sigma} we know that $\bar \sigma_{\delta, r_n}\xrightharpoonup[n \to \infty]{\star} \bar\sigma_{\delta}$. 
    By the lower semi-continuity of the Wasserstein distance w.r.t.~the narrow convergence, if we prove that $F(\nu') < \infty$, that is, if the limit satisfies the constraints in the definition of $F$, we will have that
    \[
        F(\nu') \le \liminf_{n \to \infty} F_{r_n}(\nu_n').
    \]
    
    As $\alpha \nu_n' \ge \H^1\mres \left(\Gamma_{n}\setminus\Sigma_{r_n}\right)$ for some  $\Gamma_n \subset \overline{B_1(0)}$ such that $\displaystyle \left(\frac{\Sigma-y_0}{r_n}\right) \cup \Gamma_n \in \mathcal{A}$, Blaschke's Theorem~\cite[Thm.~6.1]{ambrosio2000functions}
    and  Lemma~\ref{lemma.blowup_domain_measure} imply that, up to a subsequence, $\Gamma_n  \xrightarrow[n \to \infty]{d_H} \Gamma$ for some closed set $\Gamma \subset \overline{ B_1(0)}$ and $\displaystyle \frac{\Sigma-y_0}{r_n}\xrightarrow[n \to \infty]{K} \RR\tau$. Hence,
    \[
        \Xi_n 
        \eqdef 
        \left(
            \frac{\Sigma - y_0}{r_n}
        \right) 
        \cup \Gamma_n
        \xrightarrow[n \to \infty]{K}
        \Xi 
        \eqdef 
        \RR\tau \cup \Gamma.
    \]
    
    Let us check that $\Xi$ is connected (which is not immediate since the Kuratowski limit of connected sets is not necessarily connected). 
    Assume by contradiction that there are two disjoint open sets $U,V \subset \RR^d$ such that $U\cap \Xi$ and $V\cap \Xi$ form a partition of $\Xi$. Since $\RR\tau \subset \Xi$ is connected, it is contained in either $U$ or $V$ (say, $U$). As a result, $V\cap \Xi\subset \Gamma \subset \overline{B_1(0)}$ is bounded, and possibly replacing $V$ with $V\cap B_2(0)$, we may assume that $V$ is bounded too, so that $\partial V$ is compact. 
     Since $\Xi\subset V\cap (\RR^d\setminus \overline{V})$, we note that $\partial V \cap \Xi=\emptyset$, and we deduce that $\min_{x\in \partial V} \dist(x,\Xi)>0$.
     
     Now, the Kuratowski convergence of $\Xi_n$ towards $\Xi$ implies that, for all $n$ large enough, $\Xi_n$ intersects both $V$ and $U\subset \RR^d\setminus \overline{V}$, hence, by the connectedness of $\Xi_n$,  there exists $x_n\in \Xi_n \cap \partial V$.
     But the Kuratowski convergence also implies that $\dist(\cdot,\Xi_n) \xrightarrow[]{} \dist(\cdot,\Xi)$ locally uniformly (hence uniformly on $\partial V$), which contradicts that $\min_{x\in \partial V} \dist(x,\Xi)>0$. As a result, $\Xi$ is connected.

    The fact that $\supp\nu' \subset [-\tau, \tau]\cup \Gamma$ comes from the weak convergence of $\nu'_n$ to $\nu'$. As this convergence takes place in a compact set it also holds that $\nu'(\overline{B_1(0)}) = \disp \lim_{n \to \infty} \nu'_n(\overline{B_1(0)}) = 2\theta_\delta(y_0)$ since $\theta_\delta(y_0)$ is the density of $\nu_\delta$ at $y_0$. 

    It only remains to verify the density constraints, $\alpha \nu' \ge \mathcal{H}^1\mres \left(\Gamma\setminus [-\tau,\tau]\right)$.
    We cannot apply \Golab's Theorem to $\nu_n'$ since, although $\alpha\nu_n' \ge  \H^1\mres \left( \Gamma_n\setminus \Sigma_{r_n}\right)$, we do not have an upper bound on the number of connected components of $\Gamma_n\setminus \Sigma_{r_n}$. What we do know is that the sequence $\Xi_n = \disp r_n^{-1}(\Sigma  - y_0) \cup \Gamma_n$ satisfies the assumptions of Theorem \ref{theorem.Golab_localversion}, 
    so we apply it to the measures $\H^1\mres\Xi_n$ instead, remembering that
    \begin{align*}
       \mathcal{H}^1\mres 
        \left(
            \frac{\Sigma - y_0}{r_n}
        \right)
        + 
        \alpha\nu_n'
        \ge 
        \mathcal{H}^1\mres 
        \left(
            \frac{\Sigma - y_0}{r_n}\cup\Gamma_n
        \right).
    \end{align*}
    The left-hand side converges in the local weak-$\star$ sense to $\H^1\mres \RR\tau + \alpha\nu'$. The right-hand side (which is bounded by the left-hand side) 
    converges in the same sense, up to a subsequence. We let
    $\lambda$ denote a limit and Theorem~\ref{theorem.Golab_localversion} implies that $\lambda \ge \H^1\mres (\RR\tau\cup \Gamma)$, which gives
    $
        \H^1\mres \RR\tau + \alpha\nu' \ge \H^1\mres (\RR\tau\cup \Gamma),
    $
    and thus 
    \[
        \alpha\nu' \ge \H^1\mres \left(\Gamma\setminus [-\tau,\tau]\right).
    \]

    \par $\Gamma$-limsup: Let $(r_{n})_{n\in\mathbb{N}}$ be an infinitesimal sequence. By  Lemma~\ref{lemma.blowup_domain_measure}, we know that $(\Sigma-y_0)/r_{n}$ converges in the Kuratowski sense towards $\RR\tau$, and $\Sigma_{r_n}\eqdef (\Sigma-y_0)/r_{n}\cap\overline{B_1(0)}$ converges towards $[-\tau,\tau]$ for the Hausdorff distance.

    The strategy to prove the limsup is illustrated in Figure~\ref{figure.gamma_convergence},
    and roughly explained as follows. We concatenate three steps. First we renormalize $\nu'$ to satisfy the mass constraint in $F_{r_{n}}$. But this normalization may break the condition $\alpha \nu_n'\ge \H^1\mres \left(\Gamma\setminus [-\tau,\tau]\right)$, so we slightly shrink the support to satisfy this constraint again. We also need the measure $\nu_{n}'$ to be supported on some connected set $\Sigma_{r_{n}}\cup \Gamma_{n}$, hence we move the mass of $\nu'$ from $[-\tau,\tau]$ to $\Sigma_{r_n}$ by projection, and we translate the mass of each connected component of the (shrinked)  $\Gamma\setminus [-\tau,\tau]$ so that it is connected to $\Sigma_{r_n}$. Eventually, by doing so, some parts of the support may get out of $\overline{B_1(0)}$, so we project the residual mass onto $\overline{B_1(0)}$.
    
To be more precise, we first address the case $\theta_\delta(y_0)=0$. As $F(\nu')<+\infty$ if and only if $\nu'=0$, we need only prove the result for $\nu'=0$. Let $P_n$ be any measurable selection of the projection onto ${\Sigma_{r_n}}$, and define $\nu_n'\eqdef {P_n}_{\sharp} \bar \sigma_{\delta, r_n}$. With $\Gamma=\emptyset$, and since $|x-P_n(x)|\le \delta^{-1}$ for all $x\in \supp \bar \sigma_{\delta,r_n}$, we observe that 
\[
    F_{r_n}(\nu'_n)\le W_p^p(\bar \sigma_{\delta, r_n},\nu_n')\le \delta^{-p} \frac{\nu_\delta(B_{r_n})}{r_n} \xrightarrow[n\to +\infty]{} 0 = F(\nu').
\]
Moreover, as $\nu'_n \xrightharpoonup[n \to +\infty]{} \nu'$ in the narrow topology, we have built a recovery sequence for $\nu'$.
    
Now, we deal with the  case $\theta_{\delta}(y_0)>0$. Let $\nu'$ such that $F(\nu') < +\infty$, and let $\Gamma$ be a set as in~\eqref{Gamma_limitF}. Observe that $[-\tau,\tau]\cup \Gamma$ is connected, being  the projection of $\RR\tau\cup \Gamma$ onto $\overline{B_1(0)}$, and since it has finite $\H^1$ measure, it is arcwise connected, by~\cite[Prop. 30.1, Cor. 30.2]{david2006singular}. As a result, $\RR\tau\cup\Gamma$  is arcwise connected too.

Let $(C_{i})_{i\in I}$ denote the arcwise connected components of $\Gamma\setminus (\RR\tau)$.  For each $i \in I$, as the set $\RR\tau\cup \Gamma$ is arcwise connected, one may check that there exists some $z_i\in [-\tau,\tau]$ such that $\{z_{i}\}\cup  C_{i}$ is arcwise connected. As a result, the set $C_{i} \subset \RR^{d}\setminus(\RR\tau)$ cannot consist of one single point, and $\H^1(C_i)>0$. Therefore, the index set  $I$ is at most countable.

Let us construct a recovery sequence $(\nu_n')_{n\in \mathbb{N}}$. By the Kuratowski (even Hausdorff) convergence  of $\Sigma_{r_{n}}$ towards $[-\tau,\tau]$, for each $i\in I$, there exists a sequence $(z_{n,i})_{n\in \mathbb{N}}$ such that $z_{n,i}\in \Sigma_{r_{n}}$ for each $n\in \mathbb{N}$, and $z_{n,i}\to z_i$. We then define 
\begin{align*}
    a_n\eqdef \frac{\nu_{\delta}(B_{r_n})}{2r_n\theta_{\delta}(y_0)},
    \quad\mbox{and}\quad   
    s_n\eqdef \max(1,a_{n}^{-1}), 
\end{align*}
noting that $a_n\to 1$ and $s_n\to 1$, and we introduce the map $T_n$,
\[
    T_n(y) 
    \eqdef 
    \begin{cases}
        P_n(y/s_{n}),& \text{ if $y \in [-\tau, \tau]$,} \\ 
        (y-z_{i})/s_n+z_{n,i},& \text{ if $y \in C_i$},
    \end{cases}
\]
where, as before, $P_n$ is some measurable selection of the projection onto ${\Sigma_{r_n}}$. The map $T_n$ shrinks each connected component $C_i$ and translates it to the corresponding $z_{n,i} \in \Sigma_{r_n}$ so as to ensure connectedness (see below). Letting $P_B$ denote the projection onto the unit ball $\overline{B_1(0)}$, we eventually define
\[ 
    \nu_n'\eqdef (P_B\circ T_n)_{\sharp}(a_n\nu').
\]
 
Let us check that $\nu_n'$ converges to $\nu'$ in the narrow topology.
We note that for $y\in [-\tau,\tau]$,
\[
    |y/s_{n} - P_n(y/s_{n})| = \dist\left(y/s_{n}, \Sigma_{r_n}\right) \le d_H\left([-\tau, \tau], \Sigma_{r_n}\right)\xrightarrow[n\to +\infty]{} 0,
\]
so that $T_{n}(y)\to y$, and for $y\in C_{i}$,
\[
    \abs{y-T_{n}(y)}\le \abs{y}(1-1/s_{n}) + \abs{z_i/s_n-z_{n,i}}\xrightarrow[n\to +\infty]{} 0.
\]
As a result, for $y\in [-\tau,\tau]\cup \Gamma$, $T_n(y)\to y$, and eventually $P_B\circ T_n(y)\to y$.
By the dominated convergence theorem, we get that for any $\phi\in C_b(\RR^d)$,

\[ 
    \int\phi\dd \nu_n' 
    = 
    a_n\int_{[-\tau,\tau]\cup \Gamma}\phi\left(P_B(T_n(y))\right)\dd \nu'(y) 
    \xrightarrow[n\to +\infty]{}  
    \int_{[-\tau,\tau]\cup \Gamma}\phi\left(y\right)\dd \nu'(y)
\]
so that $\nu_n'\xrightharpoonup[n\to +\infty]{}\nu'$ in the narrow topology.

Let us now check the constraints in $F_{r_n}$. From the properties of image measures, we see that $\supp \nu_n'\subset \overline{B_{1}(0)}$, and that $\nu_n'(\overline{B_{1}(0)})=\nu_n'(\RR^d)=a_n\nu'(\RR^{d})= \nu_{\delta}(B_{r_n})/r_n$, so that $\nu_n'$ has the mass prescribed by $F_{r_n}$. Consider the set
\begin{equation}\label{eq.defgamman}
    \Gamma_n\eqdef 
    \bigcup_{i\in I} \Gamma_{n,i}
    \quad 
    \mbox{where }
    \quad 
    \Gamma_{n,i}\eqdef \overline{(P_B\circ T_{n})(C_{i})}.
\end{equation}

In addition, the mass of $\nu_n'$ is concentrated in $\Sigma_{r_n}\cup \Gamma_n$, and we prove below that satisfies all the constraints in $F_{r_{n}}$.

First let us show that $\displaystyle \frac{\Sigma - y_0}{r_n} \cup \Gamma_n$ is connected. For each $i\in I$, as the set $\{z_{i}\}\cup C_{i}$ is arcwise connected, so is its image by the map $y\mapsto (y-z_{i})/s_n+z_{n,i}$, which is equal to $\{z_{n,i}\}\cup T_n(C_i)$. As a result $\{z_{n,i}\}\cup \overline{P_B\circ T_n(C_i)} = \{z_{n,i}\}\cup \Gamma_{n,i}$ is connected, as well as $\displaystyle \frac{\Sigma - y_0}{r_n} \cup \Gamma_n$. 

Now, let us show that $\displaystyle \frac{\Sigma - y_0}{r_n} \cup \Gamma_n$ is closed. If $I$ is finite, then, by~\eqref{eq.defgamman}, $\displaystyle r_n^{-1}(\Sigma - y_0) \cup \Gamma_n$ is closed as the finite union of closed sets.
Otherwise, $I$ is countable, and from~\cite[Lemma~2.6]{paolini2013existence}, we have
\[
    \H^1(\Gamma_{n,i}) 
    = 
    \H^1(P_B\circ T_n(C_i)) 
    \le 
    \H^1(T_n(C_i))
    = 
    s_n^{-1} \H^1(C_i)
    \xrightarrow[i \to \infty]{} 0.
\]
Let ${\left(x_k\right)}_{k \in \mathbb{N}}$ be a sequence contained in $\displaystyle r_n^{-1}(\Sigma - y_0) \cup \Gamma_n$, such that $x_k \to x$. If there is an infinite amount of terms of this sequence in either $\displaystyle \frac{\Sigma - y_0}{r_n}$ or any of the $\Gamma_{n,i}$, since these sets are closed, then $x \in \displaystyle \frac{\Sigma - y_0}{r_n} \cup \Gamma_n$. Otherwise, we can find a sub-sequence $x_{k'} \in \Gamma_{n, i_{k'}}$, so that 
\[
    \dist\left(
        x, \frac{\Sigma - y_0}{r_n}
    \right)
    = 
    \lim_{k' \to \infty} 
    \dist\left(
        x_{k'}, \frac{\Sigma - y_0}{r_n}
    \right)
    \le 
    \lim_{k' \to \infty} \H^1(\Gamma_{n,i_{k'}}) = 0,
\]
and we conclude that $\displaystyle \frac{\Sigma - y_0}{r_n} \cup \Gamma_n$ is closed. 

To show it satisfies the density constraints, take any non-negative $\phi\in C_b(\RR^d)$,
\begin{align*}
    \alpha\int\phi\dd \nu_n' 
    &= \alpha a_n\int_{[-\tau,\tau]\cup \Gamma}\phi\left(P_B(T_n(y))\right)\dd \nu'(y)\\
    &\ge \alpha a_n\sum_{i\in I}\int_{C_{i}}\phi\left(P_B(T_n(y))\right)\dd \nu'(y)\\
    &\ge a_n\sum_{i\in I}\int_{C_{i}}\phi\left(P_B((y-z_{i})/s_{n}+z_{n,i})\right)\dd \H^{1}(y)\\
    &= a_n s_{n}\sum_{i\in I}\int_{\Gamma_{n,i}}\phi\left(P_B(y')\right)\dd \H^{1}(y')\\ 
    &
    \ge \int_{\Gamma_{n}} \phi \dd \H^{1}. 
\end{align*}
It follows that $\alpha \nu_n'\geq \H^1\mres \Gamma_{n}$ and we conclude that $F_{r_n}(\nu_n) < \infty$, for all $n \in \mathbb{N}$.

By the continuity of the Wasserstein distance with
respect to the narrow convergence (provided the measures are supported in some common compact set), we have that:
    \[
        F_{r_n}(\nu_n') \xrightarrow[n \to \infty]{} F(\nu').
    \]
The $\Gamma$-convergence follows.
\end{proof}

Now that we have characterized the limit problem, we show that the optimal transportation is given by projections as the blow-up family.
\begin{lemma}\label{lemma.brenier_map_blowup_family}
If $\theta_\delta(y_0)>0$, the optimal transport plan between the measure $\sigma_{\delta}$, defined in~\eqref{blowup_measures_sigma}, and $\bar \nu = \theta_\delta(y_0)\H^1\mres [-\tau, \tau]$, defined in~\eqref{blowup_measures_nu}, is unique and given by the projection map $\Pi_{[-\tau, \tau]}$.
\end{lemma}
\begin{proof}
    Consider a family $\bar \gamma_r$ of optimal transport plans from $\bar \sigma_{\delta , r}$ to $\bar \nu_{\delta ,r}$. Up to a subsequence it converges to some $\bar \gamma$, which, by the stability of optimal transport plans, also transports $\sigma_{\delta}$ to $\bar \nu$ optimally. Since $\bar \sigma_{\delta , r}, \bar \nu_{\delta ,r}$ are generated by the pushforward of $\nuexc\mres B_r(y_0)$ by $\Phi^{y_0, r}$, from Lemma~\ref{lemma.minimal_distance2Sigma} we know that 
    \[
        \supp \bar \gamma_r \subset \text{graph}\left(\Pi_{\Sigma_r}\right).  
    \]

    Let us show that $\supp \bar \gamma \subset \text{graph}\left(\Pi_{[-\tau, \tau]}\right)$. Indeed if $(x,p) \in \supp \bar \gamma$, there is an open ball $B$ centered at $(x,p)$ such that
    \[
        0 < \bar \gamma(B) \le \liminf_{r \to 0} \bar\gamma_r(B).
    \]
    In particular, we can find $\supp \bar \gamma_r \ni (x_r, p_r) \xrightarrow[r \to 0]{} (x,p)$. So it holds that
    \[
        |x - p| 
        = 
        \lim_{r \to 0}   
        |x_r - p_r|
        =  
        \lim_{r \to 0}   
        \dist\left(x_r, \Sigma_r\right) 
        = 
        \dist(x, [-\tau, \tau]),
    \]
    where the last equality comes from the uniform convergence of the distance functions, recalling from Lemma~\ref{lemma.blowup_domain_measure} that $\Sigma_r \xrightarrow[r \to 0]{d_H} [-\tau, \tau]$. 

    Now we show that this property is true for any other optimal plan. Consider $\gamma$ transporting $\sigma_{\delta}$ to $\bar \nu$ optimaly, then by the optimality of $\bar \gamma$ it holds that
    \begin{align*}
        \int_{\RR^d} (\dist(x, [-\tau, \tau]))^p\dd \sigma_{\delta} 
        &= 
        \int |x - y|^p\dd \bar\gamma
        =        \int |x - y|^p\dd \gamma\\
        &\ge 
        \int (\dist(x, [-\tau, \tau]))^p \dd \gamma
        = 
        \int_{\RR^d} \dist(x, [-\tau, \tau])^p\dd \sigma_{\delta}.
    \end{align*}
    Since $|x - y| - \dist(x,[-\tau, \tau]) \ge 0 $ for $\gamma$-a.e. $(x,y)$ and the inequality above must be an equality, we must have $\supp \gamma \subset \text{graph}\left(\Pi_{[-\tau, \tau]}\right)$ for any optimal $\gamma$. In particular, as $\Pi_{[-\tau, \tau]}$ is uni-valued, it means that the optimal transport plan is unique and given by the projection map.
\end{proof}

\subsection{Competitor for the limit problem and existence for~\eqref{problem.shape_optimization}} 
Given $y_0\in \Sigma$ such that \eqref{y0fixed_for_blowup} holds, it follows from Theorem~\ref{theorem.Gamma_convergence} that:
\[
    \bar \nu_{\delta} \eqdef \theta_\delta(y_0)\H^1\mres [-\tau, \tau] 
    \in 
    \argmin F,
\]
where $F$ is defined in~\eqref{Gamma_limitF}. 
In addition, Lemma~\ref{lemma.brenier_map_blowup_family} shows that if $\theta_\delta(y_0)>0$,
 the optimal transportation of $\bar \sigma_{\delta}$ to $\bar \nu_{\delta}$ is given by the orthogonal projection.
We show that in this case, we can lower the energy by projecting part of the mass to a (closer) horizontal line as in Figure~\ref{figure.better_competitor}. 

This contradicts the existence of rectifiability points of $\Sigma$ such that $\theta_\delta(y_0) > 0$ so that $\nu_\delta \equiv 0$, and shows
the following Lemma:
\begin{figure}[t]
  \tikzset{every picture/.style={line width=0.75pt}} 

\begin{tikzpicture}[x=0.75pt,y=0.75pt,yscale=-1,xscale=1]

\draw [color={rgb, 255:red, 74; green, 144; blue, 226 }  ,draw opacity=1 ]   (192.68,133.07) -- (261.24,133.07) -- (296.03,132.58) ;
\draw [fill={rgb, 255:red, 208; green, 2; blue, 27 }  ,fill opacity=1 ]   (194.54,1.89) -- (194.54,68.13) ;
\draw    (194.54,195.75) -- (194.54,261.99) ;
\draw [color={rgb, 255:red, 208; green, 2; blue, 27 }  ,draw opacity=1 ] [dash pattern={on 0.84pt off 2.51pt}]  (194.54,68.13) -- (194.54,173.94) -- (194.54,195.75) ;
\draw [color={rgb, 255:red, 208; green, 2; blue, 27 }  ,draw opacity=1 ] [dash pattern={on 0.84pt off 2.51pt}]  (507.66,90.76) -- (196.2,90.76) ;
\draw [shift={(194.2,90.76)}, rotate = 360] [fill={rgb, 255:red, 208; green, 2; blue, 27 }  ,fill opacity=1 ][line width=0.08]  [draw opacity=0] (12,-3) -- (0,0) -- (12,3) -- cycle    ;
\draw [color={rgb, 255:red, 74; green, 144; blue, 226 }  ,draw opacity=1 ] [dash pattern={on 4.5pt off 4.5pt}]  (194.2,90.76) -- (259.55,132) ;
\draw [shift={(261.24,133.07)}, rotate = 212.25] [fill={rgb, 255:red, 74; green, 144; blue, 226 }  ,fill opacity=1 ][line width=0.08]  [draw opacity=0] (12,-3) -- (0,0) -- (12,3) -- cycle    ;
\draw [color={rgb, 255:red, 74; green, 144; blue, 226 }  ,draw opacity=1 ] [dash pattern={on 4.5pt off 4.5pt}]  (194.54,173.94) -- (259.53,134.11) ;
\draw [shift={(261.24,133.07)}, rotate = 148.5] [fill={rgb, 255:red, 74; green, 144; blue, 226 }  ,fill opacity=1 ][line width=0.08]  [draw opacity=0] (12,-3) -- (0,0) -- (12,3) -- cycle    ;
\draw [color={rgb, 255:red, 208; green, 2; blue, 27 }  ,draw opacity=1 ] [dash pattern={on 0.84pt off 2.51pt}]  (508,173.94) -- (196.54,173.94) ;
\draw [shift={(194.54,173.94)}, rotate = 360] [fill={rgb, 255:red, 208; green, 2; blue, 27 }  ,fill opacity=1 ][line width=0.08]  [draw opacity=0] (12,-3) -- (0,0) -- (12,3) -- cycle    ;
\draw [fill={rgb, 255:red, 208; green, 2; blue, 27 }  ,fill opacity=1 ] [dash pattern={on 4.5pt off 4.5pt}]  (387.29,1.89) -- (387.48,261.22) ;
\draw  [color={rgb, 255:red, 0; green, 0; blue, 0 }  ][dash pattern={on 4.5pt off 4.5pt}]  (197.2,269.8) -- (386.2,269.8) ;
\draw [shift={(388.2,269.8)}, rotate = 180] [color={rgb, 255:red, 0; green, 0; blue, 0 }  ][line width=0.75]    (10.93,-3.29) .. controls (6.95,-1.4) and (3.31,-0.3) .. (0,0) .. controls (3.31,0.3) and (6.95,1.4) .. (10.93,3.29)   ;
\draw [shift={(195.2,269.8)}, rotate = 0] [color={rgb, 255:red, 0; green, 0; blue, 0 }  ][line width=0.75]    (10.93,-3.29) .. controls (6.95,-1.4) and (3.31,-0.3) .. (0,0) .. controls (3.31,0.3) and (6.95,1.4) .. (10.93,3.29)   ;

\draw (140,7.98) node [anchor=north west][inner sep=0.75pt]  [font=\footnotesize]  {$[ -e_{d} ,e_{d}]$};
\draw (276.4,275.76) node [anchor=north west][inner sep=0.75pt]  [color={rgb, 255:red, 0; green, 0; blue, 0 }  ][font=\footnotesize]  {$\delta /2$};
\draw (263.24,136.47) node [anchor=north west][inner sep=0.75pt]  [font=\footnotesize,color={rgb, 255:red, 74; green, 144; blue, 226 }  ,opacity=1 ]  {$\ell ( \varepsilon ')$};
\draw (156.77,84.18) node [anchor=north west][inner sep=0.75pt]  [font=\footnotesize,color={rgb, 255:red, 74; green, 144; blue, 226 }  ,opacity=1 ]  {$\overline{s} +\varepsilon '$};
\draw (157.68,165.74) node [anchor=north west][inner sep=0.75pt]  [font=\footnotesize,color={rgb, 255:red, 74; green, 144; blue, 226 }  ,opacity=1 ]  {$\overline{s} -\varepsilon '$};
\draw (420.53,6.12) node [anchor=north west][inner sep=0.75pt]    {$\supp \theta _{i}$};
\draw (181.77,126.18) node [anchor=north west][inner sep=0.75pt]  [font=\footnotesize,color={rgb, 255:red, 74; green, 144; blue, 226 }  ,opacity=1 ]  {$\overline{s}$};

\end{tikzpicture}
  \caption{Construction of a competitor for the minimization of $F$.}
  \label{figure.better_competitor}
\end{figure}
\begin{lemma}\label{lemma.better_competitor}
    For any $\delta > 0$, the measures $\nu_\delta$ defined in~\eqref{measure_nu_delta} vanish. 
\end{lemma}
\begin{proof}
    Up to a rotation, we may assume that $\tau = e_d$, where $(e_i)_{i = 1}^d$ is a basis of ${\RR}^d$. Since $\bar \sigma_{\delta}$ is supported on $\displaystyle \left\{x = (x',x_d) \in {\RR}^d : |x'|>\delta,\  |x_d|\le 1 \right\}$, 
    we can cover its support with finitely many sets
    $(E_i)_{i=1}^N$ defined as:
    \[
        E_i
        \eqdef 
        \left\{
            x = (x',x_d) \in {\RR}^d : 
                 \inner{\xi_i, x} > \delta/2, 
                 \  |x_d| \le 1
        \right\}
    \]
    where $\xi_i \in \mathbb{S}^{d-1} \cap [e_d]^\perp$ are ``horizontal'' unit
    vectors and $N$ depends only on the dimension.
    We then define a disjoint family
    \[
        F_1 = E_1, \quad F_{i+1} = E_{i+1}\setminus \bigcup_{j=1}^i F_j \text{ for $i\ge 1$}
    \]
    and decompose our measures $\bar \sigma_{\delta}$ and $\bar \nu_{\delta}$ as 
    \[
        \bar \sigma_{\delta}  = \sum_{i = 1}^N \bar \sigma_{\delta, i}, \ 
        \bar \nu_{\delta} = \sum_{i = 1}^N \bar \nu_{\delta, i}
        \text{ where } 
        \bar \sigma_{\delta, i}  \eqdef \bar \sigma_{\delta} \mres F_i 
        \text{ and }
        \bar \nu_{\delta, i} \eqdef {\proj_d}_\sharp\bar \sigma_{\delta, i},
    \]
    with $\proj_d : x\mapsto x_d e_d$ the projection onto the vertical axis.
    By Radon-Be\-si\-co\-vitch's 
    differentiation theorem, $\bar \nu_{\delta, i} = \theta_i \H^1\mres[-e_d, e_d]$, where $\theta_i(s) = \theta_i(s e_d)\ge 0$ are such that
    \[
        \sum_{i = 1}^N \theta_i = \theta_\delta(y_0).    
    \]
    
    Consider $\bar s\in (-1,1)$ a common Lebesgue point of all $\theta_i$, $i=1,\dots, N$.

    Let $i$ be the index for which $\theta_i(\bar s)$ is maximal: then 
    $\theta_i(\bar s) \ge \theta_\delta(y_0)/N$.

    Up to a change of horizontal coordinates, we assume that $\xi_i = e_1$,
    and we introduce the  notation: $\RR^d \ni x = (x_1, x'', x_d)$ for $x'' \in \RR^{d-2}$.
    Let now:
    \begin{equation*}
        C_{\varepsilon} 
        \eqdef 
        F_i \cap \{ x \in\RR^d : |x_d- \bar s|<\varepsilon \}
        \subset \left\{
            x = (x_1, x'', x_d) : 
                x_1 > \delta/2, 
                \ |x_d - \bar s| < \varepsilon
        \right\}.
    \end{equation*}
    We obtain, from the fact that ${(\proj_d)}_\sharp \bar \sigma_{\delta, i} = \theta_i\H^1\mres [-e_d,e_d]$, that
    \[
        \frac{\bar \sigma_{\delta, i}(C_\varepsilon)}{2\varepsilon} = 
        \frac{1}{2\varepsilon} \int_{\bar s-\varepsilon}^{\bar s+\varepsilon}\theta_i(t)\dd t 
        \xrightarrow[\varepsilon \to 0]{} 
        \theta \eqdef  \theta_i(\bar s) \ge \frac{\theta_\delta(y_0)}{N}. 
    \]
     Now, assume by contradiction that $\theta>0$. If $\varepsilon$ is small enough, we have:
    \begin{equation}\label{eq.condition_t}
        \theta 
        \le 
        \frac{\bar \sigma_{\delta, i}(C_{\varepsilon'})}{\varepsilon'}
        \le 
        3\theta. 
    \end{equation}
    for all $\varepsilon'\le \varepsilon$.
    Now let us exploit the fact that, from Lemma~\ref{lemma.brenier_map_blowup_family}, the optimal transport is given by projections to propose a new transport map, sending the mass in $C_{\varepsilon}$ to a segment pointing towards $e_1$:
    \[
        T(x)
        \eqdef 
        \begin{cases}
            \ell(|x_d-\bar s|)e_1 + \bar s e_d,& \text{ if $x \in C_{\varepsilon}$}\\ 
            \proj_d(x),& \text{ otherwise,}
        \end{cases}
    \]
    where $\ell:[0,\varepsilon] \to \RR_+$ is defined via the conservation of mass relation, for $0\le \varepsilon'\le \varepsilon$:
    \begin{equation}\label{eq:conservmass}
        \ell(\varepsilon') = \alpha \bar \sigma_{\delta, i}(C_{\varepsilon'}).
    \end{equation}
    In other words, the mass that was sent to the vertical segment $[\bar s-\varepsilon', \bar s+\varepsilon']e_d$
    is now sent to the horizontal segment $\bar s e_d+[0, \ell(\varepsilon')]e_1$,
    for each $\varepsilon'\in [0,\varepsilon]$.
    This construction is illustrated in Figure~\ref{figure.better_competitor}.

    Thanks to~\eqref{eq:conservmass}, the map $T$ sends $\bar \sigma_{\delta, i}\mres C_\varepsilon$ to
    the measure $\alpha^{-1}\H^1\mres L$
    where $L\eqdef \bar s e_d+[0,\ell(\varepsilon)]e_1$, 
    hence, the transported measure $T_\sharp \bar \sigma_{\delta}$ satisfies the constraints in the definition~\eqref{Gamma_limitF} of the limiting functional $F$ and one has $F(T_\sharp \bar \sigma_{\delta})<+\infty$. 
    
    We shall now see that for each point $x \in C_\varepsilon$ with $x_d\neq \bar s$, it holds that
    \begin{equation}\label{eq.strict_distance_inequality}
        |x - \proj_d(x)|^p > |x - T(x)|^p.
    \end{equation}
    To show~\eqref{eq.strict_distance_inequality}, recalling the notation $x = (x_1, x'', x_d)$, it suffices that
    \begin{align*}
        |x - \proj_d(x)|^2 > |x- & T(x)|^2 
        \\
        &\Longleftrightarrow 
        x_1^2 + |x''|^2 > 
        (x_1-\ell(|x_d-\bar s|))^2+|x''|^2 + (x_d-\bar  s)^2\\
        &\Longleftrightarrow 
        2x_1\ell(|x_d-\bar s|) > \ell(|x_d-\bar s|)^2 + (x_d-\bar s)^2.
    \end{align*}
    In addition to~\eqref{eq.condition_t}, we  choose $\varepsilon$ in such a way that for any $x \in C_\varepsilon$ we have
    \[  \alpha\theta |x_d-\bar s|
        \le 
        \ell(|x_d-\bar s|) = \alpha \bar \sigma_{\delta, i}(C_{|x_d-\bar s|}) 
        \le 3\alpha\theta\varepsilon < {\left(1 +\frac{1}{(\alpha\theta)^2}\right)}^{-1}\delta
    \]
    and hence
    \[
       \ell(|x_d-\bar s|)^2+(x_d-\bar s)^2  \le
       \left(1 + \frac{1}{(\alpha\theta)^2}\right)
        \ell(|x_d-\bar s|)^2 
        < \delta \ell(|x_d-\bar s|)
         \le 2x_1 \ell(|x_d-\bar s|),    
    \]
    for all $x \in C_{\varepsilon}$, with $x_d \neq \bar s$, so that~\eqref{eq.strict_distance_inequality} holds.
    Since $\theta=\theta_i(\bar s)>0$, it follows that
    \[
        F(T_\sharp \bar \sigma_{\delta})=
        W_p^p(\bar \sigma_{\delta} , T_\sharp\bar \sigma_{\delta}) 
        < 
        W_p^p(\bar \sigma_{\delta}, \bar \nu_{\delta})
        =
        F(\bar \nu_{\delta}).     
    \]
    This contradicts the fact that $\theta_\delta(y_0) \H^1\mres [-e_d, e_d]$ is a minimizer of $F$, showing that we must have $\theta=\theta_i(\bar s)=0$ and, in turn, $\theta_\delta(y_0)=0$.
    As this holds for $\H^1$-a.e.~point $y_0 \in \Sigma$, we deduce that $\nu_\delta \equiv 0$.
\end{proof}

The previous lemma, combined with the caracterization of solutions, as in~\eqref{nuexc.further_decomposition},
\[
    \nu = \alpha^{-1}\H^1\mres \Sigma + \sup_{\delta > 0}\nu_\delta + \rhoexc\mres \Sigma
\]
proves the following result, showing in particular point (2) of Theorem~\ref{theorem.the_big_one}.

\begin{theorem}\label{theorem.inR2_density_is_1}
    Let $\rho_0 \in \mathcal{P}_p(\RR^d)$ and suppose that the parameter $\Lambda < \Lambda_\star$. Then the solution to the relaxed problem~\eqref{problem.shape_optimization_relaxed} is of the form 
        \[
            \nu = 
            \mathcal{L}(\nu)^{-1}\H^1\mres \Sigma    
            + 
            \rhoexc\mres \Sigma,
        \]
        where $\rhoexc$ was defined in \eqref{rhoexceed}.

        In addition, if $\rho_0$ does not give mass to $1$-rectifiable sets, any solution of the relaxed problem~\eqref{problem.shape_optimization_relaxed} corresponds to a solution of the original shape optimization problem \eqref{problem.shape_optimization}.
\end{theorem}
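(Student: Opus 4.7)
The first assertion reduces, via the decomposition \eqref{nuexc.further_decomposition}, to proving that for every $\delta>0$ the density $\theta_\delta$ of $\nu_\delta$ obtained from Theorem~\ref{theorem.solutions_are_absolutely_continuous} vanishes $\H^1$-a.e. on $\Sigma$: indeed this gives $\sup_{\delta>0}\nu_\delta=0$, hence $\nuexc=\rhoexc\mres\Sigma$, which is exactly the required representation. The plan is a contradiction argument based on the $\Gamma$-convergence of Theorem~\ref{theorem.Gamma_convergence}. Assume for contradiction that $\theta_\delta(y_0)>0$ at some rectifiability and Lebesgue point $y_0$ as in \eqref{y0fixed_for_blowup}. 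After translating $y_0$ to the origin and rotating so that $T_{y_0}\Sigma=\mathbb{R}e_2$, the blow-ups $\bar\nu_r$ converge to $\lambda^{y_0}=\theta_\delta(y_0)\,\H^1\mres[-e_2,e_2]$, which by the $\Gamma$-convergence of minimizers must minimize the limit functional $F$ of \eqref{Gamma_limitF}; Lemma~\ref{lemma.brenier_map_blowup_family} moreover identifies the transport from $\sigma^{y_0}$ to $\lambda^{y_0}$ as the projection onto $[-e_2,e_2]$, giving the explicit value $F(\lambda^{y_0})=\int|x_1|^p\,d\sigma^{y_0}$.

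The heart of the proof is the construction of a strictly better competitor for $F$, exploiting the 2D geometry. Write $\sigma^{y_0}=\sigma_++\sigma_-$ with $\supp\sigma_\pm\subset\{\pm x_1\ge\delta\}$ and ${\Pi_{[-e_2,e_2]}}_\sharp\sigma_\pm=\theta_\pm\H^1\mres[-e_2,e_2]$, so that $\theta_\delta(y_0)=\theta_++\theta_-$. Up to exchanging sides, assume $\theta_+>0$ on a set of positive $\H^1$-measure. For small $a>0$, attach a horizontal segment $\Gamma_a=[0,ae_1]$ to $[-e_2,e_2]$ at the origin (so that $[-e_2,e_2]\cup\Gamma_a\in\mathcal{A}$) and define the competitor
\[
	\nu_a=\theta_\delta(y_0)\,\H^1\mres[-e_2,e_2]+\alpha^{-1}\,\H^1\mres\Gamma_a-\eta_a,
\]
where $\alpha=\L(\nu)$ and $\eta_a\ge 0$ is a nonnegative measure supported on a short vertical subsegment of $[-e_2,e_2]$ near the origin, chosen with total mass $a/\alpha$ so that $\nu_a(B_1)=2\theta_\delta(y_0)$ and $\nu_a\ge\alpha^{-1}\H^1\mres\Gamma_a$, making $\nu_a$ admissible for $F$.

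For the energy comparison, transport $\sigma_-$ and the part of $\sigma_+$ outside a thin horizontal slab $\{|x_2|\le ha\}$ by projection onto $[-e_2,e_2]$ as before, reroute the portion of $\sigma_+$ inside the slab to $\Gamma_a$ by horizontal projection, and rearrange an appropriate small amount of mass near the origin to accommodate $\eta_a$. The rerouted mass sees its transport cost drop from $|x_1|^p$ to $|x_1-a|^p+O((ha)^p)$ for $|x_1|\ge\delta$, a first-order saving of order $a$ (with a positive coefficient, since $\sigma_+$ is bounded away from $[-e_2,e_2]$ by $\delta$ and strict convexity of $t\mapsto t^p$ applies for $p>1$, while for $p=1$ the inequality is immediate). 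The rebalancing due to $\eta_a$ contributes a cost of higher order in $a$. A derivative-at-zero computation then shows $F(\nu_a)<F(\lambda^{y_0})$ for $a$ sufficiently small, contradicting minimality. I expect this comparison to be the main obstacle: one must simultaneously pick the slab height $ha$, the support of $\eta_a$, and the new transport plan in a way that makes the $O(a)$ savings dominate the $O(a)$ cost of rebalancing.

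Having established $\theta_\delta\equiv 0$ for every $\delta>0$, the representation $\nu=\L(\nu)^{-1}\H^1\mres\Sigma+\rhoexc\mres\Sigma$ follows from \eqref{nuexc.further_decomposition}. For the last assertion, if $\rho_0$ gives no mass to $1$-rectifiable sets then in particular $\rho_0(\Sigma)=0$, hence $\rhoexc(\Sigma)\le\rho_0(\Sigma)=0$, so $\nu=\L(\nu)^{-1}\H^1\mres\Sigma=\nu_\Sigma$ with $\Sigma\in\mathcal{A}$. Thus $\Sigma$ is admissible for \eqref{problem.shape_optimization}, and by Theorem~\ref{theorem.existence_relaxed_problem} its energy equals $\min\eqref{problem.shape_optimization_relaxed}=\inf\eqref{problem.shape_optimization}$, so $\Sigma$ achieves the infimum.
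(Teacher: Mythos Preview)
Your overall strategy coincides with the paper's: reduce to $\theta_\delta\equiv 0$ via \eqref{nuexc.further_decomposition}, blow up at a point with $\theta_\delta(y_0)>0$, use the $\Gamma$-convergence of Theorem~\ref{theorem.Gamma_convergence} and Lemma~\ref{lemma.brenier_map_blowup_family} to identify $\lambda^{y_0}$ as a minimizer of $F$, then derive a contradiction by attaching a short horizontal segment and rerouting part of $\sigma_+$. The final paragraph on the original problem is correct.

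The gap is exactly where you flag it: your competitor $\nu_a$ forces you to simultaneously (i) spread the rerouted mass uniformly with density $\alpha^{-1}$ on $\Gamma_a$, and (ii) subtract $\eta_a$ from the vertical segment, and you have not shown that the resulting rebalancing cost is beaten by the gain. Your ``first-order saving of order $a$'' is in fact $O(a^2)$ (rerouted mass $\sim a$, per-point gain $\sim a$), and the rebalancing on $\Gamma_a$ is $O(a^{p+1})$; for $p=1$ these are of the same order and the constants, with your choice of $h$, exactly match at leading order, so the argument as written does not close.

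The paper avoids this balancing problem entirely by a cleaner construction. Instead of fixing the segment length $a$ and then compensating, it fixes a slab height $\varepsilon$ and defines the rerouting map by $R(x)=(R_1(x),0)$ with
\[
R_1(x)=\alpha\int_{-|x_2|}^{|x_2|}\theta_+(t)\,dt,
\]
so that, by conservation of mass, the image measure of $\sigma_+\mres\{|x_2|\le\varepsilon\}$ under $R$ is \emph{exactly} $\alpha^{-1}\H^1\mres[0,\ell_\varepsilon]$ with $\ell_\varepsilon=\alpha\int_{-\varepsilon}^{\varepsilon}\theta_+$. There is no rebalancing term: the competitor $R_\sharp\sigma_+$ is already admissible. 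The comparison then reduces to the pointwise inequality $|x-R(x)|<|x_1|$, equivalently $R_1(x)(2x_1-R_1(x))>x_2^2$, which is checked directly for $\varepsilon$ small using that $x_1\ge\delta$, that $0$ is a Lebesgue point of $\theta_+$, and $\varepsilon<\delta\alpha\theta_+(0)$. This pointwise strict inequality immediately gives $W_p^p(\sigma_+,R_\sharp\sigma_+)<W_p^p(\sigma_+,\theta_+\H^1\mres[-e_2,e_2])$ for every $p\ge 1$, with no asymptotic bookkeeping. Adopting this map in place of your $\nu_a$ closes the argument.
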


\section{Ahlfors regularity}\label{section.ahlfors_regularity}
In this section we prove that whenever the initial measure $\rho_0 \in L^{\frac{d}{d-1}}(\RR^d)$, the optimal solutions to the relaxed problem \eqref{problem.shape_optimization_relaxed} have an Ahlfors regular support. 

\begin{definition}\label{definition.alhfords_regularity}
  We say that a set $\Sigma \subset \RR^d$ is {\em Ahlfors regular} whenever there exist $r_0>0$ and $c,C>0$ such that for $r \le r_0$ it holds that
  \[
    cr \le \H^1(\Sigma \cap B_r(x)) \le Cr, \text{ for all $x \in \Sigma$.}
  \]
\end{definition}

We prove in this section the following result.
\begin{theorem}\label{th:ahlfors}
  If $\rho_0\in L^{\frac{d}{d-1}}(\RR^d)$, let $\nu$ be a solution of the relaxed problem \eqref{problem.shape_optimization_relaxed} and $\Sigma$ its support. Then $\Sigma$ is Ahlfors-regular: there exist $\bar r_0>0$ and $\bar C>0$ such that, for all $\bar x \in \Sigma$ and $r \le \bar r_0$,
  \[
    r \le \H^1(\Sigma \cap B_r(\bar x)) \le \bar C r. 
  \]
    Moreover, $\bar r_0$ depends only on $d,p,\rho_0$ and $\alpha \eqdef \mathcal{L}(\nu)$, while $\bar C$ depends only on $d$ and $p$.
\end{theorem}

The lower bound (with $c=1$ and $r_0=\diam\Sigma$) follows directly from the connectedness of $\Sigma$. The upper bound will follow as a corollary of Lemma~\ref{lemma.ahlfors} below. Let us describe  the strategy for proving this estimate. We point out that the construction in this section,
although  different, follows similar steps as
the proof of Ahlfors' regularity in~\cite[Lem.~6.1, Thm.~6.4]{paolini2004qualitative}.

The idea is similar to proving the $L^\infty$ bound on the excess measure: if in a small ball
$B_r(\bar x)$ the measure $\nu$ has too much mass,
we build another ``closer'' 1D structure onto which the mass is transfered at a smaller cost. Yet there is an additional difficulty: when replacing $\Sigma \cap B_r(\bar x)$ with
another set we must preserve the connectedness.
The proof of Theorem~\ref{theorem.solutions_are_absolutely_continuous}, required to rearrange only the excess mass
and this was not an issue.
We now need to control the number of connected components
of $\Sigma \setminus B_r(\bar x)$ and connect them back without adding
too much length.

This number of connected components is controlled by the
quantity $\H^0(\Sigma\cap\partial B_r(\bar x))$, which we can control on average
by means of the {\em generalized area formula}~\cite[Theorem 2.91]{ambrosio2000functions}: If $f:\RR^M \to \RR^N$ is a Lipschitz function and $E\subset \RR^M$ is a $k$-rectifiable set then it holds that
\begin{equation}
  \label{equation.area_formula_generalized}
  \int_{\RR^N} \H^0(E \cap f^{-1}(y))\dd\H^k(y)
  =
  \int_{E} J_k\dd^Ef_x \dd\H^k(x),
\end{equation}
where $\dd^Ef_x$ is the restriction of $\nabla f(x)$ (when $f$ is smooth) to the approximate tangent space of $E$. 
Hence, choosing $E = \Sigma \cap (B_{r_1}(\bar x) \setminus B_{r_2}(\bar x))$ and $f : x \mapsto |x - \bar x|$, 
we deduce from~\eqref{equation.area_formula_generalized} that
\begin{equation}\label{equation.area_formula_generalized_inequality}
  \int_{r_2}^{r_1} \H^0(\Sigma \cap \partial B_s(\bar x))\dd s
  \le 
  \H^1(\Sigma \cap B_{r_1}(\bar x)) - \H^1(\Sigma \cap B_{r_2}(\bar x))
\end{equation}

Using this we first prove the following lemma:
\begin{lemma}\label{lemma.ahlfors}
  Assume $\rho_0\in L^{\frac{d}{d-1}}(\RR^d)$.
  There exist $\bar C(d,p)>0$ and $r_0$ depending on $\rho_0$, $\alpha$, $d$, $p$, such
  that for any $C\ge \bar C$, if $r\le r_0$ and $x\in \Sigma$,
  then either $\H^1(\Sigma\cap B_r(x))\le Cr$ or $\H^1(\Sigma\cap B_{2r}(x))\ge 10Cr$.
\end{lemma}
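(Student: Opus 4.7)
My plan is to argue by contradiction: assuming both $\H^1(\Sigma\cap B_r(\bar x))>Cr$ and $\H^1(\Sigma\cap B_{2r}(\bar x))<10Cr$ for some $\bar x\in\Sigma$ and $r\le r_0$, I would construct a competitor $\nu'$ with $\E(\nu')<\E(\nu)$, contradicting the optimality of $\nu$. The construction rests on cutting $\Sigma$ at a well-chosen spherical slice and reconnecting with an efficient 1D structure. The first step, as anticipated in the excerpt, is to apply the generalized area formula inequality~\eqref{equation.area_formula_generalized_inequality} to the 1-Lipschitz map $x\mapsto |x-\bar x|$ and $E=\Sigma\cap(B_{2r}(\bar x)\setminus B_r(\bar x))$, obtaining
\[
  \int_r^{2r}\H^0(\Sigma\cap\partial B_s(\bar x))\,\dd s \le \H^1(\Sigma\cap B_{2r}(\bar x))-\H^1(\Sigma\cap B_r(\bar x))<9Cr.
\]
A Chebyshev argument then produces some $s_0\in(r,2r)$ with $N:=\H^0(\Sigma\cap\partial B_{s_0}(\bar x))$ bounded by a universal multiple of $C$. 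The intersection is then a finite set $\{p_1,\dots,p_N\}$, and by connectedness of $\Sigma$ every connected component of $\Sigma\setminus B_{s_0}(\bar x)$ meets one of the $p_i$, which will be essential to keep the competitor set in $\cA$.

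I would then build
\[
  \Sigma' := \bigl(\Sigma\setminus B_{s_0}(\bar x)\bigr)\cup\bigcup_{i=1}^N[\bar x,p_i]\in\cA,
\]
the ``spider'' reconnecting the exterior components through $\bar x$. Writing $\alpha:=\L(\nu)$ and $\beta:=\nu(B_{s_0}(\bar x))$, the constraint $\alpha\nu\ge\H^1\mres\Sigma$ yields $\alpha\beta\ge\H^1(\Sigma\cap B_{s_0}(\bar x))>Cr$. The competitor measure would be defined by leaving $\nu$ untouched outside $B_{s_0}(\bar x)$, putting density $\frac{1}{\alpha}$ on the spider arms, and placing the remaining mass as a Dirac at $\bar x$:
\[
  \nu' := \nu\mres\bigl(\Sigma\setminus B_{s_0}(\bar x)\bigr) + \frac{1}{\alpha}\H^1\mres\Bigl(\bigcup_{i=1}^N[\bar x,p_i]\Bigr) + c\,\delta_{\bar x},
\]
where $c$ restores the total mass. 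A direct check analogous to the one in Section~\ref{section.solutions_are_absolutely_continuous} then gives $\L(\nu')\le\alpha$, provided $\bar C(d,p)$ is chosen large enough so that the constants balance (in particular, the constant $\bar C$ must dominate the ``spider overhead'' $Ns_0/r$ arising from Step~1).

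It remains to estimate the Wasserstein difference. Using Lemma~\ref{lemma.minimal_distance2Sigma}, decompose the optimal plan $\gamma$ from $\rho_0$ to $\nu$ into its parts transporting into $B_{s_0}(\bar x)$ and outside; only the former has to be rerouted, and both endpoints sit in $B_{s_0}(\bar x)$ (of diameter $\le 4r$), so by the triangle inequality for $W_p$ the cost is increased by at most $(4r)^p\beta$. The hypothesis $\rho_0\in L^{d/(d-1)}(\RR^d)$ enters decisively here via H\"older's inequality
\[
  \rho_0(B_R(\bar x))\le \omega_d^{1/d}\|\rho_0\|_{L^{d/(d-1)}}R,
\]
forcing the mass $\beta$ (and hence the source of the rerouted portion) to scale linearly in $r$, so that the Wasserstein increment is of order $r^{p+1}\|\rho_0\|_{L^{d/(d-1)}}/\alpha$; this is eventually dominated by the strictly negative contribution of the length term for $r\le r_0$ once $C\ge\bar C$ is fixed. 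The main obstacle I foresee is precisely this delicate simultaneous balancing: one must control that $\L(\nu')\le\L(\nu)$ (which fixes $\bar C(d,p)$ in terms of the spider construction) \emph{and} that the Wasserstein increase, which is unavoidable since the spider moves mass within $B_{s_0}(\bar x)$, is strictly dominated by the length gain (which determines the smallness scale $r_0$ through $\|\rho_0\|_{L^{d/(d-1)}}$, $\alpha$, $\Lambda$ and $p$).
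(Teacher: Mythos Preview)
Your outline shares the paper's opening move (coarea slicing, cut and reconnect), but it has a fundamental gap: there is no source of strict improvement. You write that the Wasserstein increment is ``dominated by the strictly negative contribution of the length term''. But the regularization in~\eqref{problem.shape_optimization_relaxed} is $\Lambda\L(\nu)$, not $\Lambda\H^1(\supp\nu)$. Your competitor leaves $\nu$ unchanged on $\Sigma\setminus B_{s_0}(\bar x)$; wherever the constraint $\alpha\nu\ge\H^1\mres\Sigma$ was saturated there (and it is saturated somewhere on $\Sigma$, since $\alpha=\L(\nu)$), it remains saturated for $\nu'$, so $\L(\nu')=\alpha=\L(\nu)$. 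The improvement must therefore come from $W_p^p$. Yet your spider lies inside $\overline{B}_{s_0}$, so rerouting mass onto it cannot decrease the transport cost: you end up with $\E(\nu')\ge\E(\nu)$ and no contradiction. Relatedly, your use of $\rho_0\in L^{d/(d-1)}$ is misplaced: H\"older bounds $\rho_0(B_R(\bar x))$, not $\beta=\nu(B_{s_0})$, whose $\rho_0$-preimage may live anywhere in $\supp\rho_0$; in fact $\beta\ge Cr/\alpha$ grows with $C$.

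The paper's construction is engineered precisely to manufacture a Wasserstein gain. It replaces $\Sigma\cap B_{\bar s}$ by a net on the sphere $\partial B_{\bar s}$ equipped with outward ``spikes'', so that any source point $x$ with $|x-\bar x|\ge 10r$ is strictly closer (by at least $r/2$) to the new structure than to any point of $B_r(\bar x)$. The integrability hypothesis then guarantees that \emph{most} of the preimage of $\nu\mres B_r$ lies outside $B_{10r}(\bar x)$, so the gain from far mass beats the loss from near mass. Two further technical points: your Chebyshev bound only gives $N\lesssim C$, hence spider length $\lesssim Cr$, which need not be below $\alpha\beta$; the paper instead integrates $f'/f$ over $(3r/2,2r)$ to get $\bar s\,\H^0(\Sigma\cap\partial B_{\bar s})\le(4\ln 10)\,\H^1(\Sigma\cap B_{\bar s})$ with a constant independent of $C$. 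And the bound $(4r)^p\beta$ on the $W_p^p$ increment is wrong for $p>1$: moving the target by $O(r)$ changes $|x-y|^p$ by $O(r|x-y|^{p-1})$, not $O(r^p)$, when the source $x$ is far from $\bar x$.
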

\begin{proof}
  Let $r> 0$ and $C\ge 1$, and let $\bar x\in\Sigma$ such that
  \begin{equation}\label{eq.bounds_length}
        \H^1(\Sigma\cap B_r(\bar x))> Cr
        \text{ and }
        \H^1(\Sigma\cap B_{2r}(\bar x))< 10Cr.
  \end{equation}
  We show that if $r\le r_0$ and $C\ge \bar C$, which will both be chosen later, then we can contruct a better competitor to the minimizer $\nu$.

  The function $f: s \mapsto \H^1(\Sigma\cap B_s(\bar x))$ is nondecreasing, hence
  in $BV(\RR_+)$ and satisfies, thanks to~\eqref{equation.area_formula_generalized_inequality}, 
  that  $\H^0(\Sigma\cap \partial B_s(\bar x))\dd s\le Df$ in the sense of measures
  (equivalently, $\H^0(\Sigma\cap\partial B_s(\bar x))$ is less than, or equal to $f'(s)\dd s$, the absolutely continuous part of $Df$).

We note that 
\begin{align*}
	\inf_{s \in (3r/2,2r)} \left(\frac{s\H^0\left(\Sigma\cap\partial B_s(\bar x)\right)}{\H^1(\Sigma\cap B_{s}(\bar x))}\right)
&\le \frac{2}{r}\int_{3r/2}^{2r}\frac{s\H^0\left(\Sigma\cap\partial B_s(\bar x)\right)}{\H^1(\Sigma\cap B_{ s}(\bar x))} \dd s\\
&\le  4\int_{3r/2}^{2r} \frac{1}{f(s)} f'(s)\dd s \\
&\leq 4\ln \left(\frac{f(2r)}{f(3r/2)}\right),
\end{align*}
where we have used the classical chain rule at almost every point and~\cite[Cor. 3.29]{ambrosio2000functions}. Since $f(2r)/f(3r/2)<(10Cr)/(Cr)=10$, we deduce that there exists $\bar s\in (3r/2,2r)$ such that 
  \begin{equation}\label{eq:estimbars}
	\bar\delta \bar s\H^0(\Sigma\cap\partial B_{\bar s}(\bar x)) \le \H^1(\Sigma\cap B_{\bar s}(\bar x)) \quad \mbox{ where $\bar \delta \eqdef \frac{1}{4\ln 10}.$}
  \end{equation}
  Now, we let 
  \begin{equation}\label{eq:M}
    M= 2\left( 1+10\cdot  \left(\frac{40}{17}\right)^{p-1}\right)
  \end{equation}
  (this choice will be made clear at the end of this proof) and we consider
  \begin{equation}\label{eq:delta}
    \delta \eqdef \frac{\bar\delta}{10M} <\bar \delta<\frac{1}{2}.
  \end{equation}

  We define a set $\Gamma$ as follows: we choose a finite covering of
  $\partial B_1(0)$ with balls $B(x_i,\delta/2)$ centered
  at points $(x_i)_{i=1}^N$ (the minimal number $N$ depends only on $d$ and $p$, through $\delta$).
  Then, we find a minimal tree connecting the points $(x_i)_{i=1}^N$ through geodesics on the sphere. We add to this minimal tree the segments $[x_i,(1+\delta)x_i]$, $i=1,\dots,N$. We call $\Gamma$ the resulting (connected) set,
  whose total length $L\eqdef \H^1(\Gamma)$ is of order at most $2N\delta$ and depends
  only on $d$ and $p$.
  Notice that each point of $\partial B_1$ is at distance at most $\delta$,
  along the geodesic curve on the sphere, to a point of $\Gamma$, and that thanks to
  the ``spikes'' $[x_i,(1+\delta) x_i]$, any point with, say, $|x|\ge 10$ is closer to
  a point of $\Gamma$ than from any point in $B_1(0)$.

  Now, we define
  \[
    \Gamma_{\bar s} \eqdef (\bar x + \bar s \Gamma) \cup \bigcup_{x \in \Sigma \cap \partial B_{\bar s}} S_x, 
  \]
  where $S_x$ denotes a geodesic connecting $x$ to $\bar x + \bar s \Gamma$, of
  length at most $\H^1(S_x)\le \bar s\delta$.
Since $\bar s<2r$ and $\delta<1/2$, it follows that $\Gamma_{\bar s} \subset B_{3r}(\bar x)$. We define the competitor set as
  \[
    \Sigma' \eqdef \Sigma \setminus B_{\bar s}(\bar x) \cup \Gamma_{\bar s}. 
  \]
  The addition of the geodesics $S_x$ ensures that $\Sigma'$ remains connected, and using~\eqref{eq:estimbars}, we estimate the length of $\Gamma_{\bar s}$ as
  \begin{equation}\label{eq:lengthGamma}
    \begin{aligned}
      \H^1(\Gamma_{\bar s}) 
      &\le L\bar s + \delta\bar s\H^0(\Sigma\cap\partial B_{\bar s}(\bar x))
      \le 2Lr + \tfrac{1}{10M}\H^1(\Sigma\cap B_{\bar s}(\bar x))\\
      &< (2L + \tfrac{C}{M})r,
    \end{aligned}
  \end{equation}
 where we have used~\eqref{eq.bounds_length} in the last estimate.
  Now we define a new competitor $\nu'$ whose support is $\Sigma'$. If $\gamma$ denotes an optimal transport plan from $\rho_0$ to $\nu$, given $s >0$ let
  \[
	  \rho_{s} \eqdef {\pi_{0}}_\sharp\left(\gamma \mres \left(\RR^d\times B_{s}\right)\right) 
  \]
  denote the portion of the measure $\rho_0$ which is transported to the ball $B_{s}$. In particular,  the above length estimates imply  that 
  \begin{equation}\label{eq:Lr0}
    Lr 
    \le 
    \H^1(\Gamma_{\bar s}) 
    < 
    (2L+\tfrac{C}{M})r 
    \le 
    (2\tfrac{L}{C}+\tfrac{1}{M})\alpha\nu(B_r) 
    \le
    \alpha \rho_r(\RR^d)\leq \alpha \rho_{\bar s}(\RR^d),
  \end{equation}
  where $\alpha\eqdef\L(\nu)$, and using that $M\ge 2$ (see~\eqref{eq:M}) and assuming $\bar C\ge 4L$ (which we recall depends
  only on $d$ and $p$).
  But, if $r$ is small enough (not depending on $\bar x$, by uniform equi-integrability
  of $\rho_0^{d/(d-1)}$) H\"older's inequality implies that
  \begin{equation}\label{eq:Lr}
    \alpha \rho_{\bar s}(B_{10r}(\bar x)) 
    \le 
    \alpha\|\rho_0\|_{L^{\frac{d}{d-1}}(B_{10r}(\bar x))}
    |B_{10r}(\bar x)|^{\frac{1}{d}} < Lr.
  \end{equation}
  We fix $r_0>0$, which depends only on the dimension (through $L$), the integrability of $\rho_0$, and $\alpha$, such that the above inequality holds for $r\le r_0$. 
  
  Equations~\eqref{eq:Lr0}-\eqref{eq:Lr} show that for $r$ small enough, part of the mass transported to $\nu \mres B_{\bar s}$ must come from outside of the ball $B_{10r}$. In particular, since $t \mapsto \rho_{\bar s}(B_t(\bar x))$ is continuous, there is $R > 10r$ such that 
  \begin{equation}\label{equation.mass_inNout_BR}
    \begin{aligned}
      & \rho_{\bar s}(B_R(\bar x)) = \alpha^{-1}\H^1(\Gamma_{\bar s}).
    \end{aligned}
  \end{equation}

  To form the new competitor we proceed as follows: the mass sent to $\Sigma\setminus B_{\bar s}$ remains untouched, the mass  $\rho_{\bar s}\mres B_R$ previously used to form $\nu\mres B_{\bar s}$ is transported to $\alpha^{-1}\H^1\mres \Gamma_{\bar s}$ and the remaining mass is projected onto $\Gamma_{\bar s}$. 
  
  So, letting $\tilde \gamma$ an optimal transport plan between $\rho_{\bar s}\mres B_R$ and $\alpha^{-1}\H^1\mres \Gamma_{\bar s}$, we define the  plan
  \[
    \gamma' = 
    \gamma\mres \RR^d\times B_{\bar s}(\bar x)^c 
    +
    \tilde \gamma\mres B_R\times\RR^d 
    +
    (\text{id}, \pi_{\Gamma_{\bar s}})_\sharp \left(\rho_{\bar s}\mres B_R^c\right),
  \]
  and the new competitor $\nu'$ as its second marginal. By construction, $\alpha\nu' \ge \H^1\mres \Sigma'$ so that $\mathcal{L}(\nu') \le \mathcal{L}(\nu)$. We now estimate the gain in terms of transportation cost. 
  \begin{itemize}
  \item For $(x,y)\in  B_R\times B_{\bar s}$ and for any $y' \in \Gamma_{\bar s}\subset B_{3r}$, as $\bar s\le 2r$ and $10r < R$, the convexity of $t \mapsto t^p$ yields 
    \begin{align*}
      |x - y'|^p
      &\le
        \left(|x - y| + 5r\right)^p
        \le 
        |x - y|^p 
        + 
        5rp
        \left(|x - y|+5r\right)^{p-1} \\ 
      &\le 
        |x - y|^p 
        + 
        5rp {{\left(2R\right)^{p-1}}}.
    \end{align*}
    Hence integrating w.r.t.~the transport plans we get
    \[
      \int_{B_R\times\Gamma_{\bar s}}|x - y'|^p \dd \tilde \gamma 
      \le 
      \int_{B_R\times B_{\bar s}}|x - y|^p \dd \gamma
      +
      5rp
      \left({2R}\right)^{p-1}\rho_{\bar s}\left(B_R\right),
    \]
    (this can be checked by disintegration w.r.t.~their common first marginal, which is the measure $\rho_{\bar s}\mres B_R$).
  \item 
    Similarly, for $x \in B_R^c$ and $y \in B_{\bar s}\setminus B_r$ the addition of the spikes ensures that
    \[
      |x - \pi_{\Gamma_{\bar s}}(x)| \le |x - y|.
    \]
    However if $x \in B_R^c$ and $y \in B_r$ it holds that 
    \[
      |x - \pi_{\Gamma_{\bar s}}(x)| \le |x - y| - \frac{r}{2} 
      \text{ and }
      |x - y| \ge R - r,
    \]
    so that once again using the convexity of $t\mapsto t^p$ we have
    \begin{align*}
      |x - \pi_{\Gamma_{\bar s}}(x)|^p
      &\le
        \left(|x - y| -\frac{r}{2}\right)^p
        \le 
        |x - y|^p 
        -
        p\frac{r}{2}
        \left(|x - y|-\frac{r}{2}\right)^{p-1}\\ 
      &\le 
        |x - y|^p 
        -
        p\frac{r}{2}
        \left(\frac{17}{20}R\right)^{p-1}.
    \end{align*}
    So, decomposing the integration for the points going to $B_r$ and to $B_{\bar s}\setminus B_r$, this time the transportation cost can be bound by:
    \begin{align*}
      \int_{B_R^c}
      |x - \pi_{\Gamma_{\bar s}}(x)|^p \dd \rho_{\bar s}
      &=
        \int_{B_R^c}
        |x - \pi_{\Gamma_{\bar s}}(x)|^p \dd (\rho_{\bar s} -\rho_{r})
        +
        \int_{B_R^c}
        |x - \pi_{\Gamma_{\bar s}}(x)|^p \dd \rho_{r}\\
      &\le 
        \int_{B_R^c\times B_{\bar r}}|x - y|^p \dd \gamma
        -
        p\frac{r}{2}
        \left(\frac{17}{20}R\right)^{p-1}\rho_r\left(B_R^c\right). 
    \end{align*}
  \end{itemize}  

  We get:
  \begin{align*}
    W_p^p(\rho_0, \nu')
    &\le
      W_p^p(\rho_0, \nu)
      +
      5rp
      \left(2R\right)^{p-1}\rho_{\bar s}\left(B_R\right)
      - 
      p\frac{r}{2}
      \left(\frac{17}{20}R\right)^{p-1}\rho_r\left(B_R^c\right).
  \end{align*}
  As $\mathcal{L}(\nu') \le \mathcal{L}(\nu)$, the optimality of $\nu$ gives that $W_p^p(\rho_0, \nu) \le W_p^p(\rho_0, \nu')$, which, along with the previous estimates, implies
  \[
    0 \le 
    5\cdot
    2^{p-1}\rho_{\bar s}\left(B_R\right)
    - 
    \frac{1}{2}
    \left(\frac{17}{20}\right)^{p-1}\rho_r\left(B_R^c\right)
    \ \Leftrightarrow \ 
    \rho_r\left(B_R^c\right)\le 10\cdot  \left(\frac{40}{17}\right)^{p-1}
    \rho_{\bar s}\left(B_R\right).
  \]
On the other hand, since  
 \begin{align*}
	 \rho_{r}\left(B_R(\bar x)^c\right) = \nu(B_r(\bar x))- \rho_r(B_R(\bar x)) \ge \alpha^{-1}Cr - \rho_r(B_R(\bar x))
      \ge \alpha^{-1}Cr - \rho_{\bar s}(B_R(\bar x)),
 \end{align*} 
 and recalling~\eqref{eq:lengthGamma} and~\eqref{equation.mass_inNout_BR}, we deduce:
  \[
    C
    \le 	\left( 1+10\cdot  \left(\frac{40}{17}\right)^{p-1}\right)(2L+\tfrac{C}{M})
  \]
  We conclude that with the choice~\eqref{eq:M} of $M$,
  one has $C\le 2M L$, which depends only on $p$ and $d$ and a contradiction follows
  if we choose $\bar C=1+2M L$.
\end{proof}


\begin{proof}[Proof of Theorem~\ref{th:ahlfors}]
  Consider $\bar C$, $r_0$ from Lemma~\ref{lemma.ahlfors}. Fix $x\in \Sigma$ and assume
  there is $r\in (0,r_0)$ such that $\H^1(\Sigma\cap B_r(x)) > \bar Cr$. Then the thesis of the lemma applies and it must hold that $\H^1(\Sigma\cap B_{2r}(x)) \ge 10\bar Cr$. By induction, we find that for $k\ge 1$, one of the following holds: 
  \begin{itemize}
  \item either $2^k r > r_0$;
  \item or we apply the lemma again (with $C'=5^k\bar C$ and $r'=2^kr$), using that $\H^1(\Sigma\cap B_{2^{k}r}(x)) > 5^{k}\bar C(2^k r)$, and we get
    \[
      \H^1(\Sigma\cap B_{2^{k+1}r}(x)) > 5^{k+1}\bar C(2^{k+1}r).   
    \]
  \end{itemize}

  Let $k\ge 1$ be the first integer such that $2^k r> r_0$, so that $2^{k-1}r\le r_0$
  and
  \[
    5^{k}\bar C(2^{k}r) < \H^1(\Sigma\cap B_{2^{k}r}(x)).
  \]
  Hence, $r_0 < 2^kr \le 5^{-k}\bar C^{-1}\H^1(\Sigma)$ and it holds that $k\le k_0\eqdef \log_5 (\H^1(\Sigma)/\bar C r_0)$,
  and 
  \[
    r > r_0 2^{-k} \ge \bar r_0 \eqdef r_0 \cdot 2^{-k_0}.
  \]
  This shows that, if $r \le r_0$ is such that $\H^1(\Sigma\cap B_r(x)) > \bar Cr$, then $r>\bar r_0$. As a result, for every $r\le \bar r_0$ and every $x\in\Sigma$, we have $\H^1(\Sigma\cap B_r(x))\le \bar Cr$. 
\end{proof}
\begin{remark} It is interesting to observe here that the regularity constant
  $\bar C$ depends only on $d$ and $p$, while the scale $\bar r_0$ at which the Ahlfors-regularity
  holds gets smaller as $\rho_0$ gets more singular or when $\alpha$ (or $\H^1(\Sigma)$) increases (which is when $\Lambda$ decreases).
\end{remark}

\section{Conclusion}\label{section.conclusion}
In this paper we have proposed a new variational problem, which serves as a method for approximating a probability measure with a measure uniformly distributed over a one-dimensional continuum. 
In order to prove existence, we have passed through a relaxed problem and the definition of a new functional on the space of probability measures, the length functional, that generalizes the notion of length of the support of a measure. As a tool for our analysis we have also generalized \Golab's Theorem to the case of a sequence of possibly unbounded sets converging in the Kuratowski sense. We then have shown that solutions of the relaxed
problems are, in fact, solutions to the original one whenever the original measure does not give mass to 1-rectifiable sets of $\RR^d$. 
We also have proved an elementary regularity properties of the 
optimal sets, in the form of an Ahlfors regularity estimate.

There are still many open questions left, such as:
\begin{itemize}
    \item Does the support of minimizers have loops or are they trees?
    \item What is the regularity of the optimal $\Sigma$? Can we adapt the theory in \cite{morgan1994m} and conclude they are locally $C^{1,\alpha}$ curves? 
    \item If $\nu_\Lambda$ is a solution to \eqref{problem.shape_optimization_relaxed}, what is the rate of convergence of $\nu_\Lambda \xrightharpoonup[\Lambda \to 0]{\star} \rho_0$? 
    \item The blow-up analysis in Section \ref{section.blowup} is very similar to the arguments  in \cite{santambrogio2005blow} for the blow-up of average distance minimizers. However, the argument is applied to the excess measure and not to the entire solution. Can we use similar tools to study the blow-ups of the optimal networks in our problem as well? 
    \item What are the Euler-Lagrange equations of \eqref{problem.shape_optimization_relaxed}?
    \item Could we find (efficient) numerical algorithms to solve this problem? 
\end{itemize}
Some progress has been made on a few of these questions: for instance in~\cite{MachadoThesis},
it is proven in a simplified setting (when $\rho_0$ is a finite sum
of Dirac masses) that the solution is supported on a tree; in~\cite{MachadoPhaseField},
a phase-field approach is suggested to approximate Problem~\eqref{problem.shape_optimization}, which could
lead to (still complicated) numerical methods and simulations.


\appendix

\section{Localized variational problem}\label{appendix.localized_variational_problem}
In this section, we prove Lemma \ref{lemma.auxiliary_problem}, which states that the optimality of $\nu$ implies that the exceeding measure $\nuexc$, or a slight modification of it, must satisfy a localized optimization problem. Before proceeding we review the notation introduced in the statement of the Lemma. Given an optimal transportation plan $\gamma$ between $\rho_0$ and the minimizer $\nu$, we recall the definition of $\gammaexc$ in \eqref{gammaexceed} and we fix a general Borel set $\mathcal{S} = \mathcal{S}_0 \times \mathcal{S}_1$ to define
\[
    \gamma_{\mathcal{S}} \eqdef \gammaexc \mres \mathcal{S}_0\times\mathcal{S}_1
\]
along with its marginals 
\[
    \rho_{\mathcal{S}} \eqdef {\pi_{0}}_\sharp\gamma_\mathcal{S} 
    \quad \nu_{\mathcal{S}} \eqdef {\pi_{1}}_\sharp\gamma_{\mathcal{S}},
\]


\begin{proof}[{\bf Proof of Lemma \ref{lemma.auxiliary_problem}:}]
	First, we fix some arbitrary $\Gamma$ such that $\Sigma \cup \Gamma \in \mathcal{A}$. We consider measures $\nu' \in \mathcal{M}_+(\Sigma\cup \Gamma)$ such that  $\nu'(\RR^d) = \nu_{\mathcal{S}}(\RR^d)$  and $\nu' \ge \alpha^{-1}\mathcal{H}^1\mres (\Gamma\setminus \Sigma)$, and we build competitors to $\nu$ of the form $\nu - \nu_{\mathcal{S}} + \nu'$. Such measures are supported over $\Sigma \cup \Gamma \in \mathcal{A}$ and 
    \begin{align*}
        \nu - \nu_{\mathcal{S}} + \nu' 
        &=
        \nuH + (\nuexc - \nu_{\mathcal{S}}) + \nu'\\ 
        &\ge 
        \alpha^{-1}\H^1\mres \Sigma + \alpha^{-1}\H^1\mres (\Gamma\setminus \Sigma)
        \ge  \alpha^{-1}\H^1\mres (\Sigma \cup \Gamma),
    \end{align*}
    so that $\L(\nu - \nu_{\mathcal{S}} + \nu') \le \alpha =\L(\nu)$. By optimality of $\nu$, we deduce that
    \begin{align*}
    W_p^p(\rho_0, \nu) \le W_p^p(\rho_0, \nu - \nu_{\mathcal{S}} + \nu').
    \end{align*}

    Given any transport plan $\gamma'$ from 
    $\rho_{\mathcal{S}}$ to $\nu'$, $\gamma-\gamma_{\mathcal{S}}+\gamma'$
    is a transport plan from $\rho_0$ to $\nu - \nu_{\mathcal{S}} + \nu'$
    and it follows, from the optimality of $\nu$ and $\gamma$:
    \[
    \int |x-y|^p d(\gamma-\gamma_{\mathcal S}) + \int |x-y|^p d\gamma_{\mathcal S} = \int |x-y|^p d\gamma
    \le 
    \int |x-y|^p d(\gamma-\gamma_{\mathcal S}) + \int |x-y|^p d\gamma',
    \]
    so that:
    \begin{equation}\label{eq:optlocalized}
    \int |x-y|^p d\gamma_{\mathcal S}\le \int |x-y|^p d\gamma'.
    \end{equation}
    Observe that in case $\nu'=\nu_{\mathcal{S}}$ (and $\Gamma=\emptyset$), 
    we find that
    $\gamma_{\mathcal{S}}$ is an optimal plan. 
    In particular the left-hand side of this equation 
    is $W_p^p(\rho_{\mathcal{S}}, \nu_{\mathcal{S}})$.

Since the same argument applies to $\gamma-\gamma_{\mathcal{S}}$, we observe
that:
\begin{equation}\label{rhoexc_decomposition_finer}
    W_p^p(\rho_0, \nu) 
    = 
    W_p^p\left(\rho_0 - \rho_{\mathcal{S}}, \nu -\nu_{\mathcal{S}}\right)
    + W_p^p\left(\rho_{\mathcal{S}},\nu_{\mathcal{S}}\right).
\end{equation}

Considering an optimal transport plan $\gamma'$ in
\eqref{eq:optlocalized}, we get in addition that 
        $W_p^p\left(\rho_{\mathcal{S}}, \nu_{\mathcal{S}}\right) \le W_p^p\left(\rho_{\mathcal{S}}, \nu'\right)$  
    for all the admissible variations $\nu'$ of the excess measure.

    As $\gamma_{\mathcal{S}}$ is an optimal transportation plan between $\rho_{\mathcal{S}}$ and $\nu_{\mathcal{S}}$, from \cite[Theorem 5.27]{santambrogio2015optimal} one can define a constant speed geodesic between such measures as 
    \begin{align*}
        \sigma_{\mathcal{S},t} \eqdef {\pi_{(1-t)}}_\sharp\gamma_{\mathcal{S}}, \text{ where } \pi_t(x,y) \eqdef (1-t)x + ty.
    \end{align*}
    
    Hence for any variation $\nu'$, admissible in the sense of the previous problem, and for any $t \in [0,1]$, it holds that
    \begin{align*}
        W_p\left(\rho_{\mathcal{S}}, \sigma_{\mathcal{S},t}\right) + W_p\left(\sigma_{\mathcal{S},t}, \nu_{\mathcal{S}}\right)
        &=  
        W_p\left(\rho_{\mathcal{S}}, \nu_{\mathcal{S}}\right)
        \le W_p\left(\rho_{\mathcal{S}}, \nu'\right)\\ 
        &\le W_p\left(\rho_{\mathcal{S}}, \sigma_{\mathcal{S},t}\right) + W_p\left(\sigma_{\mathcal{S},t}, \nu'\right).
    \end{align*}
    Where the equality comes from general properties of constant speed geodesics in metric spaces, while the inequalities come from the minimality of $\nu_{\mathcal{S}}$ and the triangle inequality, respectively. We conclude that in fact, the measure $\nu_{\mathcal{S}}$ also minimizes the Wasserstein distance to any measure $\sigma_{\mathcal{S},t}$ along the geodesic.
\end{proof}

\section{Kuratowski convergence and Golab's Theorem}\label{appendix.kuratowski_convergence}
In this appendix we give a proof of Lemma \ref{lemma.localized_Kuratowski}.
We then give a simple proof
of the local version of \Golab's, Theorem~\ref{theorem.Golab_localversion}.
\if{
\begin{lemma}
    Let $(C_n)_{n \in \mathbb{N}}$ be a sequence of closed sets in $\RR^d$, convering to $C$ in the sense of Kuratowski.
    Then, for any $x_0 \in \RR^d$, there exists a countable set $I\subset [0,+\infty)$ such
    that 
    \[
      C_n\cap\overline{B_R(x_0)} 
      \xrightarrow[n \to \infty]{d_H} 
      C\cap\overline{B_R(x_0)}, 
      \text{ for all $R\in [0,+\infty)\setminus I$.}
    \]
  \end{lemma}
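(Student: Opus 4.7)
The plan is to decouple the statement into two independent claims: first, that the condition $\overline{C\cap B_R(x)} = C\cap \overline{B}_R(x)$ holds outside a countable set of radii; second, that Hausdorff convergence of the localized sets holds whenever the condition is satisfied. I will use as the main tool the characterization of Kuratowski convergence through local uniform convergence of the distance functions, already cited from Rockafellar-Wets in the preamble.

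For the countability statement, I note that the inclusion $\overline{C\cap B_R(x)}\subseteq C\cap \overline{B}_R(x)$ is automatic, and equality fails exactly when some $y\in C\cap\partial B_R(x)$ is isolated from $C\cap B_R(x)$, i.e.\ there exists $r>0$ with $C\cap B_r(y)\cap B_R(x)=\emptyset$. For such a $y$, I can find a ball $B_q(p)$ with $p\in\mathbb{Q}^d$, $q\in\mathbb{Q}_+^*$, containing $y$ and such that $C\cap B_q(p)\cap B_R(x)=\emptyset$ while $y\in C\cap B_q(p)\cap \partial B_R(x)$. This forces $R=\inf_{z\in C\cap B_q(p)}\abs{z-x}$ and that this infimum be attained on the sphere; so each rational ball produces at most one bad radius, and the set of bad radii is countable.

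For the Hausdorff convergence at a good $R$, I denote $K=C\cap\overline{B}_R(x)$ and $K_n=C_n\cap\overline{B}_R(x)$. The case $K=\emptyset$ is disposed of immediately: $\dist(\cdot,C)$ is bounded below on $\overline{B}_R(x)$ and, by local uniform convergence, so is $\dist(\cdot,C_n)$ for $n$ large, hence $K_n=\emptyset$ eventually. When $K\neq\emptyset$, the inequality $\sup_{y\in K_n}\dist(y,K)\to 0$ follows by a compactness/contradiction argument: a sequence $y_n\in K_n$ with $\dist(y_n,K)\geq\varepsilon$ would have a cluster point in $C$ by Kuratowski convergence and in $\overline{B}_R(x)$ by compactness, hence in $K$, a contradiction. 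For the reverse inequality $\sup_{y\in K}\dist(y,K_n)\to 0$, I argue pointwise first, then upgrade by the same compactness trick: if $y\in C\cap B_R(x)$, then any Kuratowski approximant $y_n\to y$ lies in $\overline{B}_R(x)$ for $n$ large and gives $\dist(y,K_n)\to 0$.

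The delicate case, and the only place where the hypothesis $\overline{C\cap B_R(x)}=C\cap\overline{B}_R(x)$ is used, is when $y\in C\cap\partial B_R(x)$: a Kuratowski approximant $y_n\in C_n$ may straddle the sphere and fall outside $\overline{B}_R(x)$. This is precisely handled by writing $y$ as a limit of points $z_k\in C\cap B_R(x)$ (which is what the regularity condition provides), applying the interior case to each $z_k$, and extracting a diagonal sequence. This is the main obstacle in the proof, and the reason why excluding the countable set of bad radii is necessary. Once pointwise convergence is established, uniform convergence is obtained by the triangle inequality: if $y_n\in K$ were a counterexample with $\dist(y_n,K_n)\geq\varepsilon$, compactness would give a cluster point $y\in K$, and then $\dist(y,K_n)\geq \varepsilon-\abs{y-y_n}\to\varepsilon$ would contradict the pointwise statement.
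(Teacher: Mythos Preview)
Your proof is correct and follows a genuinely different route from the paper's.

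For the countability of bad radii, the paper fixes a direction $\xi\in\partial B_1$ and shows that the function $R\mapsto \dist(R\xi,C\cap\overline{B_R})-R$ is nonincreasing, hence has at most countably many discontinuities; a Lipschitz-in-$\xi$ estimate then propagates discontinuity to nearby directions, and a countable dense set of directions $(\xi_n)$ assembles the exceptional set $I$ as the union of the discontinuity sets. Your argument is more direct: you observe that a bad radius $R$ is always realized as $\inf_{z\in C\cap B_q(p)}|z-x|$ for some rational ball $B_q(p)$, so the bad radii inject into a countable index set. This is shorter and avoids the monotone-function machinery entirely.

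For the Hausdorff convergence at good radii, the paper argues by extracting a Hausdorff limit $C^R$ of a subsequence of $C_n\cap\overline{B_R}$ and proving the sandwich $\overline{C\cap B_R}\subset C^R\subset C\cap\overline{B_R}$, which collapses at good $R$; uniqueness of the limit then gives convergence of the full sequence. You instead prove the two halves of the Hausdorff distance estimate directly by compactness/contradiction, using the regularity condition exactly where it is needed (boundary points of $K$). Both arguments are clean; yours makes the role of the condition $\overline{C\cap B_R}=C\cap\overline{B_R}$ perhaps more transparent, while the paper's subsequence-plus-sandwich argument is a reusable template. Either approach transfers to general separable, locally compact metric spaces.
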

}\fi
We use the notation $B_R=\{x: |x|<R\}$ and $\oB_R = \{x:|x|\le R\}$.

\begin{proof}[Proof of Lemma~\ref{lemma.localized_Kuratowski}]
  Notice that, up to a translation, it suffices to prove the result for $x_0 = 0$. We can also assume that  $C\neq\emptyset$, otherwise
  for any $R>0$, $C_n\cap B_R=\emptyset$ for $n$ large enough and the result holds.
  Defining $R_0=\inf\{R>0: C\cap \oB_R\neq\emptyset\}$, we have that if $R<R_0$,
  one has $C_n\cap \oB_{R}=\emptyset$ for $n$ large enough
  and the Hausdorff limit is empty, as expected.
 
  Now we take $R\ge R_0$ and consider a subsequence $\left(C_{n_k}\right)_{k \in \mathbb{N}}$ and a closed set $C^R$ such that
  \[
    C_{n_k}\cap \oB_R \xrightarrow[n \to \infty]{d_H} C^R.
  \]
  Since $C_{n_k}\cap\oB_R\subset C_{n_k}$, it holds that $C^R\subset C$.
  On the other hand, given $x\in C \cap B_R$, if there exists $x_n\in C_n \cap \oB_R$
  with $x_n\to x$, then $x\in C^R$.
  Therefore
  \[
    \overline{C\cap B_R}\subset C^R\subset C\cap\oB_R
  \]
  and to finish the proof it suffices to show that there is a countable set $I\subset [R_0,+\infty)$ such that
  if $R\not\in I$, $R>R_0$, then $C\cap\oB_R =
  \overline{C\cap B_R}$.
  
  Let $\xi\in \partial B_1$ and consider the function
  $R\mapsto \dist(R\xi,C\cap \oB_R)$. If $R>R'\ge R_0$ it holds that
  \[
    \dist(R\xi,C\cap \oB_R)
    \le \dist(R'\xi,C\cap \oB_{R'})+R-R'.
  \]
  Indeed, let $x_{R'}$ be the point minimizing the distance from $R'\xi$ to $C\cap \oB_{R'}$, then 
  \begin{align*}
      \dist(R\xi, C\cap \oB_R)
      &\le
      d(R\xi, x_{R'})
      \le 
      d(R\xi, R'\xi) + d(R'\xi, x_{R'}) \\  
      &= 
      \dist(R'\xi, C\cap \oB_{R'}) + R - R'.
  \end{align*}
  
  Hence the function 
  $
    \varphi_\xi : R \mapsto \dist(R\xi,C\cap \oB_R)-R,
  $
  is nonincreasing in $[R_0,+\infty)$ and in particular it has at
  most a countable number of discontinuity points. In addition, given $\xi,\xi'\in\partial B_1$, it holds that 
  \begin{align*}
      \left|
        \varphi_\xi(R) - \varphi_{\xi'}(R)
      \right|
      &= 
      \left|
        \inf_{x \in \oB_R} d(x, R\xi) - \inf_{x \in \oB_R} d(x, R\xi')
      \right|\\
      &\le
      \sup_{x \in \oB_R}
      \left|
        d(x, R\xi) - d(x, R\xi')
      \right|
      \le R|\xi - \xi'|.
  \end{align*}
  Therefore if $R$ is a point of discontinuity for $\varphi_\xi$, then
  for all $\xi'$ in a neighborhood of $\xi$, $R$ is a point
  of discontinuity for $\varphi_{\xi'}$.

  Let $(\xi_n)_{n \in \mathbb{N}}$ be a dense sequence in $\partial B_1$.
  For each $n$ we can find a countable subset $I_n\subset [R_0,+\infty)$, such that $\varphi_{\xi_n}$
  is continuous at any $R\in (R_0,+\infty)\setminus I_n$.
  Finally, we define the countable set $I$ as 
  $
    \disp I= \bigcup_{n\in \mathbb{N}} I_n. 
  $

  If $R\not\in I$, then either $R<R_0$ and
  $C\cap\oB_R=\overline{C\cap B_R}=\emptyset$, or
  $R>R_0$. In that case, for any $\xi\in\partial B_1$,
  $\varphi_\xi$ is continuous. Otherwise, there would be some $\xi_n$,
  close enough to $\xi$, such that $\varphi_{\xi_n}$ is discontinuous,
  a contradiction. In particular, whenever $x = R\xi \in C$ the continuity of $\varphi_\xi$ implies that
  \[
    \lim_{R'\uparrow R}\dist(R'\xi,C\cap \oB_{R'})=0.
  \]
  Hence take $R_n \uparrow R$, set $\varepsilon_n \eqdef \dist(R_n\xi,C\cap \oB_{R_n})$ and let $x_n \in C\cap \oB_{R_n}$ be a vector attaining this distance. As $x_n \in C \cap B_R$ and $|x - x_n| \le \varepsilon_n + R - R_n$, $x_n$ converges to $x$, and $x\in \overline{C\cap B_R}$. It follows that $(C\cap\oB_R)\setminus
  \overline{C\cap B_R}=\emptyset$, completing the proof.
\end{proof}

\begin{proof}[Proof of Theorem~\ref{theorem.Golab_localversion}]
	We will show that $\mu(\Sigma \cap B_r(y_0)) \ge \H^1(\Sigma \cap B_r(y_0))$ for $\H^1$-a.e. $y_0\in \Sigma$ and for $r>0$ small enough. This implies that $\Theta_1(\mu, y_0) \ge 1$, and the result follows by integrating. Assume that $\Sigma$ is not a singleton, otherwise there is nothing to prove. Since a compact and connected set with finite length is path-wise connected, see~\cite[Prop.~30.1 and Cor.~30.2]{david2006singular} and~\cite[Thm.~4.4]{alberti2017structure}, for any $y_0 \in \Sigma$, for $r>0$ small enough $\Sigma \cap B_r^c(y_0) \neq \emptyset$ and there is a path connecting $y_0$ to the boundary $\partial B_r(y_0)$ of length at least $r$. From the Kuratowski convergence, for $n$ large enough, each set $\Sigma_n$ has a point inside and another outside the ball $\overline{B_r(y_0)}$. 
	
	We start by fixing some $0< \delta < r$ and looking at the smaller ball $B_{r-\delta}(y_0)$. Consider the following class
	\[
		\mathcal{A}_n
		\eqdef 
		\left\{
			\gamma \text{ connected component of $\Sigma_n\cap \overline{B_r(y_0)}$ which intersects $B_{r-\delta}(y_0)$}
		\right\}.
	\]
	Each $\gamma \in \mathcal{A}_n$ must be such that $\H^1(\gamma) \ge \delta$. Indeed, as for each $n \in \mathbb{N}$ there is a point in $\Sigma_n\cap B_r(y_0)^c$ and another in $\gamma \cap \partial B_{r-\delta}(y_0)$, the connectivity implies $\gamma$ is contained in an arc joining these two points, but then it must have length at least $\delta$, as it is the smallest distance between the two balls. So define
	\[
		\tilde\Sigma_n \eqdef 
		\bigcup_{\gamma \in \mathcal{A}_n}
		\gamma,
	\]
	which is a bounded sequence of closed sets, but not necessarily connected. However this sequence has a uniformly bounded number of connected components since 
	\[
		\delta\sharp \mathcal{A}_n 
		\le 
		\sum_{\gamma \in \mathcal{A}_n} \H^1(\gamma) 
		\le \H^1(\Sigma_n \cap B_R(x_0)), \text{ hence }
		\sharp \mathcal{A}_n 
		\le \sup_{n \in \mathbb{N}} \frac{\H^1(\Sigma_n\cap B_R(y_0))}{\delta} 
		< +\infty,
	\]
	for $R>0$ large enough.
	
	As $\tilde \Sigma_n$ is a bounded sequence, by Blaschke's Theorem we can assume up to an extraction that $\tilde \Sigma_n \xrightarrow[n \to \infty]{d_H} \tilde \Sigma$. In fact, for a.e. $0 < \delta < r$, using Lemma \ref{lemma.localized_Kuratowski}, it holds that
	\begin{equation}\label{identity}
		\tilde \Sigma\cap \overline{B_{r-\delta}(y_0)} = \overline{\Sigma\cap B_{r-\delta}(y_0)},
	\end{equation}
	since by the construction, $\tilde \Sigma_n\cap \overline{B_{r-\delta}(y_0)} = \Sigma_n\cap \overline{B_{r-\delta}(y_0)}$ and choosing $\delta$ such that $\Sigma_n\cap \overline{B_{r-\delta}(y_0)} \xrightarrow[n \to \infty]{K} \overline{\Sigma\cap B_{r-\delta}(y_0)}$. 

	This way, we can apply the global version of \Golab's Theorem with a uniformly bounded number of connected components to the sequence $\tilde \Sigma_n \cap B_{r - \delta}(y_0)$ so that we write 
	\begin{align*}
		\mu\left(\overline{B_r(y_0)}\right) 
		&\ge 
		\limsup_{n \to \infty} \H^1\left(\Sigma_n \cap B_r(y_0)\right)
		\ge
		\limsup_{n \to \infty} \H^1\left(\tilde \Sigma_n\right)\\ 
		&\ge 
		\liminf_{n \to \infty} \H^1\left(\tilde \Sigma_n\cap B_{r-\delta}\right)\\
		&\ge 
		\H^1\left(\tilde \Sigma\cap B_{r-\delta}(y_0)\right) 
		= \H^1\left(\overline{\Sigma\cap B_{r-\delta}(y_0)}\right)\\
		&\ge 
		\H^1\left(\Sigma\cap B_{r-\delta}(y_0)\right),
	\end{align*}
	where the first inequality is due to the local weak-$\star$ convergence of the measures and the forth is given by \Golab's Theorem. But as this estimate is true for any $\delta > 0$, it must hold that $\mu\left(\overline{B_r(y_0)}\right)  \ge \H^1\left(\Sigma\cap B_{r}(y_0)\right)$ for any $y_0 \in \Sigma$ and $r > 0$. To extend this to open balls as well we use the following estimates
	\begin{align*}
		\mu(B_r) 
		=
		\lim_{n \to \infty} \mu\left(\overline{B_{r - 1/n}}\right)
		\ge  
		\lim_{n \to \infty} \mathcal{H}^1\left(\Sigma\cap B_{r - 1/n}\right)
		= \mathcal{H}^1\left(\Sigma\cap B_r\right). 
	\end{align*}
\end{proof}


\bibliographystyle{plain}
\bibliography{references.bib}


\end{document}